 \newcommand{\Alt}{\mathfrak{A}}
 \newcommand{\sym}{\mathfrak{S}}
 \newcommand{\Ind}{\operatorname{Ind}}
 \newcommand{\Res}{\operatorname{Res}}
\newcommand{\sgn}{\operatorname{sgn}}
 \newcommand{\C}{\mathbb{C}}
\newcommand{\N}{\mathbb{N}}
 \newcommand{\Q}{\mathbb{Q}}
 \newcommand{\Z}{\mathbb{Z}}
 \newcommand{\Sym}{\mathfrak{S}}
 \newcommand{\Irr}{\operatorname{Irr}}
  \newcommand{\GL}{\operatorname{GL}}
\newtheorem{theorem}{Theorem}[section] 
\newtheorem{lemma}[theorem]{Lemma}     
\newtheorem{corollary}[theorem]{Corollary}
\newtheorem{proposition}[theorem]{Proposition}
\newtheorem{example}[theorem]{Example}
\theoremstyle{definition}
\newtheorem{remark}[theorem]{Remark}
\newcommand{\pu}[1]{{\color{purple}{#1}}}
\title[]
{The Navarro Conjecture for the alternating groups}
\author{Olivier Brunat}
\address{Universit\'e Paris-Diderot Paris 7\\ Institut de math\'ematiques de
         Jussieu -- Paris Rive Gauche\\ UFR de math\'e\-matiques\\ Case
7012\\ 75205 Paris Cedex 13\\
         France.}
\email{olivier.brunat@imj-prg.fr}
\author{Rishi Nath}
\address{York College, City University of New York, 
94--20 Guy R. Brewer Blvd. \\
Jamaica, NY 11435\\
USA
}
\email{rnath@york.cuny.edu}
\subjclass[2010]{Primary 20C30; Secondary 20C15}
\begin{document}
\begin{abstract}
Recently Navarro proposed a strengthening of the unsolved McKay
conjecture using Galois automorphisms. We prove that the Navarro
conjecture and its blockwise version hold for the
alternating groups.
\end{abstract}
\maketitle

\section{Introduction} 

    Let $G$ be a finite group of order $n$ and $p$ be a prime divisor of
$n$. We denote by $\Irr(G)$ the set of irreducible complex characters of
$G$, and by $\Irr_{p'}(G)$ the subset of irreducible characters with
degree prime to $p$. In 1972, John McKay conjectured that
$|\Irr_{p'}(G)|=|\Irr_{p'}(\operatorname{N}_G(P))|$, where $P$ is a Sylow
$p$-subgroup of $G$. Although the conjecture remains open, there is strong
evidence in its favor. In 2007, I. M. Isaacs, G. Malle and G.
Navarro~\cite{IMN} reduced the problem to a question on finite simple
groups. In particular, they assert that if a set of conditions holds for
all non abelian finite simple groups, then the original conjecture holds
for all finite groups. Using this strategy, Malle and Sp\"ath {recently
proved}~\cite{MalleSpaeth} that McKay conjecture holds at $p=2$ for all
finite groups.

The McKay conjecture {has} lead to a family of other conjectures on finite
groups. For example, the conjectures of Alperin-McKay, of Dade, of Brou\'e
and of Isaacs-Navarro are of a similar flavor. This paper is concerned
with a refinement of the McKay conjecture due to
Navarro~\cite{NavarroGalois}, which posits not only a correspondence
between the set of global-and-local irreducible characters of $p'$-degree,
but also between their character values.

In order to state the conjecture more precisely, we introduce some
notation. Let $\Q_n=\Q(\omega_n)$ be the cyclotomic subfield of $\C$,
where $\omega_n=e^{2i\pi/n}$, and $\mathcal
G_n=\operatorname{Gal}(\Q_n|\Q)$. For any $f\in \mathcal G_n$,
$\chi\in\Irr(G)$ and $g\in G$, we set ${}^f\chi(g)=f(\chi(g))$, inducing
an action of $\mathcal G_n$ on $\Irr(G)$ and then on $\Irr_{p'}(G)$.
Furthermore, if $H$ is a subgroup of $G$ of order $d$, then $d$ divides
$n$ and $\Q_d$ is a subfield of $\Q_n$. Note also that, if $f\in{\mathcal
G}_n$, then $f(\omega_d)$ is a primitive $d$-root of unity, that is, there
is some integer $r$ prime to $d$ such that $f(\omega_d)=\omega_d^r$. In
particular, $f(\Q_d)=\Q_d$ and $f|_{\Q_d}\in\mathcal G_d$. Hence,
$\mathcal G_n$ acts on $\Irr(H)$ through $\mathcal G_n\rightarrow\mathcal
G_d,\,f\mapsto f|_{\Q_d}$. 

Even though there cannot exist a bijection $\Irr_{p'}(G)\rightarrow
\Irr_{p'}(\operatorname{N}_G(P))$ that commutes with ${\mathcal G}_n$,
Gabriel Navarro observed in~\cite{NavarroGalois} that there should exist a
bijection commuting with a special subgroup $\mathcal H_n$ of $\mathcal
G_n$. More precisely, if we write $n=p^{\ell}m$ with $m$ prime to $p$, then
$\omega_n$ can be uniquely writen as a product $\omega\delta$, where
$\omega$ has order $p^{\ell}$ and $\delta$ has order $m$. It follows that
$\mathcal G_n=\mathcal K_n\times\mathcal J_n$, where $\mathcal K_n$ and
$\mathcal J_n$ are respectively the subgroups of $\mathcal G_n$ fixing
$\delta$ and $\omega$. Let $\sigma_n$ be the element of $\mathcal J_n$
such that $\sigma_n(\delta)=\delta^p$. If we set $\mathcal H_n=\mathcal
K_n\times \langle \sigma_n\rangle$, then $\mathcal K_n$ is isomorphic to
$\operatorname{Gal}(\Q_{p^{\ell}}|\Q)$, and $\mathcal H_n$ is thus the subgroup
of $\mathcal G_n$ which acts on the $p'$-roots of unity of $\Q_n$ by a
power of $p$. In~\cite[Conjecture A]{NavarroGalois}, Navarro conjectured
that for any $f\in\mathcal H_n$, there are the same number of characters
of $\Irr_{p'}(G)$ and of $\Irr_{p'}(\operatorname{N}_G(P))$ fixed by $f$.
In the following, elements of $\mathcal H_n$ will be called
\emph{Navarro automorphisms}.

While significant progress has been made on the McKay conjecture, evidence
of the veracity of the Navarro refinement has been limited to a handful of
cases: groups of odd order by Isaacs~\cite{IsaacsSolv}, for solvable
groups (E. Dade), for sporadic groups, for symmetric groups (P. Fong), for
simple groups of Lie type in defining characteristic (L.
Ruhstorfer~\cite{Ruhstorfer}), and for alternating groups for $p=2$ (by
the second author ~\cite{RishiNathAlt}). 
A. Turull gave {in~\cite{Turull} a conjecture which implies the
Navarro conjecture. He proved in~\cite{Turull} his conjecture for the
special linear groups in defining characteristic and in~\cite{Turull2}
for $p$-solvable groups.}
Recently, Navarro, Spaeth and Vallejo proved a reduction theorem of
Navarro refinement to the quasisimple groups~\cite{NavarroSpaethVallejo}. 

In this paper, we verify that when $p$ is odd the conjecture holds for an
important family of simple groups, the alternating groups. More precisely,
we will prove the following general result.

\begin{theorem}
\label{thm:main}
Let $n$ be a positive integer, and {$2<p\leq n$} be an odd prime number. 
Fix {a Sylow $p$-subgroup $P$} of
$\Alt_n$. Then there is a natural $\mathcal H_{n!/2}$-equivariant bijection
$$\Phi:\Irr_{p'}(\Alt_n)\rightarrow\Irr_{p'}(\operatorname{N}_{\Alt_n}(P)).$$  
\end{theorem}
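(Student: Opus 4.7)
The plan is to transfer the bijection from the symmetric group to $\Alt_n$ using Clifford theory across the index-two inclusion $\Alt_n\trianglelefteq\Sym_n$. Since $p$ is odd, a Sylow $p$-subgroup $P$ of $\Alt_n$ is also Sylow in $\Sym_n$, and $\operatorname{N}_{\Alt_n}(P)=\operatorname{N}_{\Sym_n}(P)\cap\Alt_n$ has index two in $\operatorname{N}_{\Sym_n}(P)$, so the same Clifford picture governs both sides of the desired bijection. The starting point is Fong's $\mathcal{H}_{n!}$-equivariant bijection $\widetilde{\Phi}:\Irr_{p'}(\Sym_n)\to\Irr_{p'}(\operatorname{N}_{\Sym_n}(P))$. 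A $p'$-character $\chi^\la$ of $\Sym_n$ restricts irreducibly to $\Alt_n$ unless $\la=\la'$, in which case $\chi^\la|_{\Alt_n}=\rho_\la^++\rho_\la^-$; the first step is to verify that under $\widetilde{\Phi}$, self-conjugate $p'$-partitions of $n$ correspond bijectively to those characters of $\operatorname{N}_{\Sym_n}(P)$ whose restriction to $\operatorname{N}_{\Alt_n}(P)$ is reducible. This is a combinatorial matching between self-conjugate $p'$-partitions and certain self-conjugate labels in the wreath-product parametrisation of $\Irr_{p'}(\operatorname{N}_{\Sym_n}(P))$.

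Once this dichotomy is preserved by $\widetilde{\Phi}$, the bijection $\Phi$ is defined on the non-split part by restriction, and the remaining task is to pair the two $\Alt_n$-constituents of each self-conjugate $\chi^\la$ with the two $\operatorname{N}_{\Alt_n}(P)$-constituents of $\widetilde{\Phi}(\chi^\la)$. Classical character theory gives the split values $\rho_\la^\pm(\sigma_\la)=\tfrac12\bigl(\varepsilon_\la\pm\sqrt{\varepsilon_\la h_1\cdots h_k}\bigr)$ on the split class indexed by $\la$, where $h_1,\dots,h_k$ are the diagonal hook lengths of $\la$ and $\varepsilon_\la=(-1)^{(n-k)/2}$, so the field of values is $\Q(\sqrt{D_\la})$ with $D_\la=\varepsilon_\la h_1\cdots h_k$. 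An analogous formula produces an explicit quadratic discriminant $D'$ on the normalizer side, computed from the wreath-product structure of $\operatorname{N}_{\Sym_n}(P)$. One then defines $\Phi(\rho_\la^\pm)$ by matching the two choices of signs for $\sqrt{D_\la}$ and $\sqrt{D'}$, yielding a well-defined bijection.

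The main obstacle is to establish $\mathcal{H}_{n!/2}$-equivariance on the split characters. For $f\in\mathcal{H}_{n!/2}$, the action on $\sqrt{D}$ is a sign $\pm 1$ depending on the image of $f|_{\Q(\sqrt{D})}$, and for $\Phi$ to intertwine this sign on both sides one must show that $D_\la$ and $D'$ define \emph{the same} quadratic subfield of $\Q_{n!/2}$ relative to the action of $\mathcal{H}_{n!/2}$, equivalently that $D_\la\cdot D'$ is a square up to a factor in the $p$-part of $\Q_{n!/2}^\times$ (on which $\mathcal{H}_{n!/2}$ is relatively well-behaved). This reduces to a careful $p$-adic analysis of the product of diagonal hook lengths of a self-conjugate $p'$-partition and its interplay with the $p$-quotient combinatorics underlying Fong's bijection, which is the most delicate part of the argument and mirrors, in a different Galois framework, the analysis carried out for $p=2$ in~\cite{RishiNathAlt}.
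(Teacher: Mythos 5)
Your outline follows the same route as the paper: use the natural Fong-type parametrization of $\Irr_{p'}(\sym_n)$ and $\Irr_{p'}(\operatorname{N}_{\sym_n}(P))$, note that $\lambda=\lambda^*$ corresponds exactly to the $\sgn$-stable characters on the normalizer side (so the split/non-split dichotomy is preserved), and reduce $\mathcal H_{n!/2}$-equivariance to showing that each $f$ permutes the two constituents $\chi_\lambda^{\pm}$ and the two constituents of the corresponding normalizer character with the same sign. The genuine gap is that this last step — the entire content of Sections~\ref{sec:local} and~\ref{sec:global} — is not proved but only announced as ``a careful $p$-adic analysis''. On the local side one must actually compute the distinguishing values of the split pairs: the Gauss-sum evaluation $(\xi_{\underline p^*}^+-\xi_{\underline p^*}^-)(\underline a)=i^{k(p-1)/2}\sqrt{p^k}$, the Clifford/Gallagher analysis of $(Y^k\wr\sym_w)^+$ in Propositions~\ref{prop:signeregular} and~\ref{prop:signesingular}, and the multiplicativity statements (Theorem~\ref{thm:irrationaliteNk}, Proposition~\ref{prop:reducbouge}); on the global side one needs the explicit description of the diagonal hooks of a symmetric partition through its $p$-core and $p$-quotient (the relation $d_x+d_{x^*}=2pw_{x,x^*}$ of Proposition~\ref{prop:crochetsfreres}), the Jacobi-symbol formula of Proposition~\ref{prop:racbouge}, and the reciprocity computations of Propositions~\ref{prop:bougereg}, \ref{prop:legendre}, \ref{prop:irrcorequotient}. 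Without these the proposal restates the problem rather than solving it.

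Two of your specific assertions are moreover incorrect or unjustified as stated. First, the criterion that $D_\lambda\cdot D'$ be ``a square up to a factor in the $p$-part'' is not sufficient: $\mathcal H_{n!/2}$ contains $\mathcal K_{n!/2}$, which acts nontrivially on $\sqrt{(-1)^{(p-1)/2}p}\in\Q(\omega_p)$ (this is why the paper tracks $\epsilon_f$ with $f(\sqrt p)=\epsilon_f\sqrt p$), so if an odd power of $p$ were left over, equivariance would fail for half of $\mathcal K_{n!/2}$. What is actually needed is that the powers of $p$ in the two discriminants have the same parity — this is exactly how the factor $\epsilon_f^{kd}$ of Proposition~\ref{prop:signesingular} is matched against $\sqrt p^{\,kd_k}$ in Lemma~\ref{lemma:singcase} — together with a quadratic-residue-mod-$p$ condition on the $p'$-part governing the action of $\sigma_{n!/2}$. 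Second, the existence of an ``analogous explicit quadratic discriminant $D'$'' on the normalizer side is itself a substantial claim: in the paper's local analysis the value separating the split pair is sometimes not a quadratic surd at all but an integer times a root of unity $\omega_{2(p-1)}^s$ (the case $w\equiv 2\bmod 4$ in Proposition~\ref{prop:signeregular}), and pinning down the parity of $s$ (hence the sign of the $\sigma_{n!/2}$-action, $(-1)^{(p-1)w/4}$) requires the extension argument via Lemma~\ref{lemma:extensionsemidirect}. So the strategy is the right one, but the heart of the proof is missing.
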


The paper is organized as follows. In Section~\ref{sec:sym} we discuss the
Navarro conjecture for the symmetric groups. It was noted
in~\cite{NavarroGalois} that this was checked by Fong. However, since the
details will be useful in our work, we supply them for the convenience of
the reader.\medskip

In Section~\ref{sec:local}, after studying the representation theory of
the alternating subgroup of a direct product of groups contained in some
symmetric group, we describe the action of automorphisms on
{$\Irr_{p'}(\operatorname{N}_{\Alt_n}(P))$}.\medskip

In Section~\ref{sec:global}, we describe the action of Navarro
automorphisms on $\Irr_{p'}(\Alt_n)$. To this end, we obtain an explicit
formula for the diagonal hook lengths of a symmetric partition of $n$ in
terms of the diagonal hooks of the $p$-core and $p$-quotient. These
results are of independent interest : many
partition-theoretic questions about Ramanujan-type congruences, monotonicity and the Durfee square can be answered using the relationship
between a partition and its $p$-core and $p$-quotient (see for example
the work of F. Garvan, D. Kim and D. Stanton \cite{GarvanKimStanton}).

\medskip

Finally, Section~\ref{sec:preuve} and 6 are devoted to the proof of
the Navarro conjecture and its blockwise version for the
alternating groups with no condition over the prime $p$.
 
\section{Verification of the conjecture for the symmetric groups}
\label{sec:sym}

Let $n$ be a positive integer and $p$ be a prime number. Let $P$ be a
Sylow $p$-subgroup of $\sym_n$, and set 
$N=\operatorname{N}_{\sym_n}(P)$. First, following~\cite[\S1 and \S2]{Fong} we
describe a parametrization of $\Irr_{p'}(\sym_n)$ and of
$\Irr_{p'}(N)$. 
The irreducible characters of $\sym_n$ are naturally labeled by the set
of partitions of $n$. For any such partition $\lambda$, we denote by
$\chi_{\lambda}$ the corresponding character of $\sym_n$.

For any partition $\lambda$ of $n$, we write $|\lambda|=n$
for the size of $\lambda$.
We also will denote by $Y(\lambda)$ for the Young diagram of
$\lambda$. Using
matrix notation, we associate to any $(i,j)$-box of $Y(\lambda)$, {an
$(i,j)$-hook} with {\it hook-length} $h_{ij}$. We denote by
$\mathfrak{D}(\lambda)$ the set of {\it diagonal hooks} of $\lambda$,
that is, the hooks in positions $(i,i)$. Such a hook will be known as the
$i$th-diagonal hook. We denote by
\begin{equation}
\label{eq:hooklength}
\mathfrak d(\lambda)=\{|h|\mid h\in\mathfrak D(\lambda)\}
\end{equation}
the set of hook-lengths of the diagonal hooks of $\lambda$.

Recall that $\lambda$ is \emph{symmetric} if $\lambda=\lambda^*$. When
$\lambda$ is symmetric the $i$th-diagonal hook is uniquely determined by
its hook-length.

Recall that any partition $\lambda$ is completely determined by its
$p$-core ${\mathcal Cor}_p(\lambda)$ and its $p$-quotient ${\mathcal
Quo}_p(\lambda)=(\lambda_0,\ldots,\lambda_{p-1})$;~\cite[\S3]{Olsson}.
Write $I=\{0,\ldots,p-1\}$ and $I^0=\emptyset$. Set
$\lambda^{\emptyset}=\lambda$.
Let $k$ be a non-negative integer, and assume $\lambda^{\underline j}$
is constructed for any $\underline j\in I^k$. Then we define $\lambda_{\underline
j}=\mathcal Cor_p(\lambda^{\underline j})$ and for $\underline
j=(j_1,\ldots,j_{k+1})\in I^{k+1}$, we write $\lambda^{\underline
j}=\mathcal Quo_p(\lambda^{j_1,\ldots,j_k})_{j_{k+1}}$. 
For any $k\geq 0$, write 
${\mathcal Cor}^{(k)}_p(\lambda)=\{\lambda_{\underline j}\,|\, \underline
j\in I^k\}$.
The set 
\begin{equation}
\label{eq:coretower}
\mathcal{CT}(\lambda)=\bigcup_{k\geq 0}{\mathcal Cor}^{(k)}_p(\lambda)
\end{equation}
is called the \emph{p-core tower} of $\lambda$. 
For more details, we refer to~\cite[p.\,41]{Olsson}.\medskip

On the other hand, recall that by~\cite[Proposition 1.1]{Fong},
$\chi_{\lambda}\in\Irr_{p'}(\sym_n)$ if and only if $0\leq
c_k(\lambda)\leq p-1$ for all $k\geq 0$, where
$c_k(\lambda)=\sum_{\underline j\in I^k}|\lambda_{\underline j}|$.
\medskip

Let $n=n_0+n_1p+n_2p^2+n_3p^3+\cdots$ be the $p$-adic expansion of $n$.
Note that the $p'$-irreducible characters of $N$ are exactly the ones
that have $P'$ in their kernel; that is, the irreducible characters of
$N$ which can be lifted from the projection $N\rightarrow N/P'$.
Furthermore, by~\cite[\S2]{Fong}, one has
\begin{equation}
\label{eq:pprimenormalizer}
N/P'
\simeq
\sym_{n_0}\times\prod_{k\geq 1}Y^k\wr\sym_{n_k},
\end{equation}
where {$X$ is a Sylow $p$-subgroup of $\sym_p$}
and $Y=\operatorname{N}_{\sym_{p}}(X)$.

Let $k\geq 1$. Write $N_k=Y^k\wr\sym_{n_k}$. The elements of $N_k$ are
denoted by $(y;\sigma)$, where $y=(y_1,\ldots,y_{n_k})\in (Y^k)^{n_k}$
and $\sigma\in \sym_{n_k}$. For any $\sigma\in\sym_{n_k}$, we denote by
$C(\sigma)$ the set of cycles of $\sigma$ {with respect to}
its canonical decomposition into cycles with disjoint supports. 
For $\tau\in C(\sigma)$, the corresponding ``cycle'' of $N_k$ is $(y_{\tau};\tau)$,
where $(y_{\tau})_j=y_j$ if $j\in\operatorname{supp}(\tau)$ and
$(y_{\tau})_j=1$ otherwise. 
For any $\tau\in C(\sigma)$,  we also define  the \emph{cycle product}
$\mathfrak c((y;\sigma),\tau)=\prod_{j\in\operatorname{supp}(\tau)}y_j$
of $(y;\sigma)$ with respect to $\tau$. 

Note that $Y=\langle a\rangle\rtimes \langle b\rangle$ with $a$ and $b$
of order $p$ and $p-1$ respectively.  Recall that $Y$ has $p-1$ linear
characters obtained by lifting {the ones} of $\langle b\rangle$ through
$Y\rightarrow Y/\langle a\rangle\simeq \langle b\rangle$, and one
character of degree $p-1$ obtained by inducing any non-trivial
characters of $\langle a\rangle$ to $Y$. Write
\begin{equation}
\label{eq:irrY}
\Irr(Y)=\{\xi_0,\ldots,\xi_{p-1}\}.
\end{equation}
Then these characters are $\Q(\omega_{p-1})$-valued by
construction.\medskip

Let $k\geq 1$. For $\underline
j=(j_1,\ldots,j_k)\in I^k$, we set  $\xi_{\underline
j}=\xi_{j_1}\otimes\xi_{j_2}\otimes\cdots\otimes\xi_{j_k}$. Then
\begin{equation}
\label{eq:irrYk}
\Irr(Y^k)=\{\xi_{\underline j}\,|\,\underline j\in I^k\}.
\end{equation}
Let $\mathcal{MP}(p^k,n_k)$ be the set of $p^k$-multipartitions of $n_k$
that is, multipartitions $\underline
\lambda=(\lambda_{\underline j};\,\underline j\in I^k)$ such that
$\sum_{\underline j\in I^k}|\lambda_{\underline j}|=n_k.$\\ 

\begin{remark}
\label{rk:lexico}
In the following, we {will always} assume that the $\lambda_{\underline
j}$'s in $\underline\lambda$ appear in increasing lexicographic order. 
\end{remark}

By~\cite{James-Kerber}, the irreducible characters of $N_k$ can be
labeled by $\mathcal{MP}(p^k,n_k)$ as follows. Let $\underline
\lambda=(\lambda_{\underline j};\,\underline j\in I^k)$ be such that
$\sum_{\underline j\in I^k}|\lambda_{\underline j}|=n_k$.  Consider the
irreducible character
$$\xi_{\underline \lambda}=\bigotimes_{\underline j\in I^k}\underbrace{\xi_{\underline
j}\otimes\cdots\otimes\xi_{\underline j}}_{|\lambda_{\underline j}|\ \text{times}}$$
of $(Y^k)^{n_k}$. If we set $\sym_{\underline \lambda}=\prod_{\underline
j\in I^k}\sym_{|\lambda_{\underline j}|}$, then the inertial subgroup of
$\xi_{\underline \lambda}$ in $N_k$ is 
$$N_{k,\underline
\lambda}=(Y^k)^{n_k}\rtimes \sym_{\underline\lambda}=\prod_{\underline
j\in I^k}Y^k\wr\sym_{|\lambda_{\underline j}|}.$$

We denote by
$E(\xi_{\underline\lambda})$ the James-Kerber extension of $\xi_{\underline \lambda}$ to $N_{k,\underline\lambda}$ described
in~\cite[\S4.3]{James-Kerber}. Note that
$E(\xi_{\underline\lambda})=\bigotimes E(\xi_{\underline
j}^{|\lambda_{\underline j}|})$ and~\cite[Lemma 4.3.9]{James-Kerber}
gives 
\begin{align}E(\xi_{\underline\lambda})\left(\prod_{\underline j\in I^k}(y_{\underline
j};\sigma_{\underline j})\right)
&=\prod_{\underline j\in I^k}E(\xi_{\underline
j}^{|\lambda_{\underline j}|})(y_{\underline j};\sigma_{\underline j})
\label{eq:jamesextension}
\\
&=
\prod_{\underline j\in I^k,|\lambda_{\underline j}|\neq 0}\
\prod_{\tau\in
C(\sigma_{\underline j})}\xi_{\underline j}(\mathfrak c((y_{\underline
j};\sigma_{\underline j}),\tau))\nonumber.
\end{align}\medskip

Now, write $\chi_{\underline\lambda}$ for the characters
$\prod_{\underline j\in I^k}\chi_{\lambda_{\underline
j}}\in\sym_{\underline\lambda}$ lifted through the canonical projection
$N_{k,\underline\lambda}\rightarrow
N_{k,\underline\lambda}/(Y^k)^{n_k}\simeq \sym_{\underline\lambda}$, and
define
\begin{equation}
\label{eq:psiNk}
\psi_{\underline\lambda,k}=\Ind_{N_{k,\underline\lambda}}^{N_k}\left(E(\xi_{\underline\lambda})
\chi_{\underline\lambda}\right).
\end{equation}
Then
$$\Irr(N_k)=\left\{\psi_{\underline\lambda,k}\,|\,\underline\lambda\in\mathcal{MP}(p^k,n_k)\right\}.$$
{The following result will be useful.}
\begin{lemma}
\label{lemma:induction}
Let $G$ be a finite group of order $n$ and $H$ a subgroup of {$G$}. Let
$f\in\mathcal G_n$. Then for any class function $\phi$ on $H$, we have
$${}^f\Ind_H^G(\phi)=\Ind_H^G({}^f\phi).$$ 
\end{lemma}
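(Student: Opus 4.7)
The plan is a direct computation using the Frobenius induction formula. Recall that for any class function $\phi$ on $H$ and any $g\in G$,
$$\Ind_H^G(\phi)(g)=\frac{1}{|H|}\sum_{\substack{x\in G\\ x^{-1}gx\in H}}\phi(x^{-1}gx).$$
The values $\phi(x^{-1}gx)$ lie in $\Q_{|H|}$, which is a subfield of $\Q_n$ since $|H|$ divides $n$; in particular each such value lies in $\Q_n$, and $\Ind_H^G(\phi)(g)\in\Q_n$ as well, so applying $f\in\mathcal G_n$ makes sense on both sides.

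The key step is to apply $f$ to the formula above. Since $f$ is a field automorphism of $\Q_n$ fixing $\Q$, it commutes with finite sums and with multiplication by the rational scalar $1/|H|$. Hence
$${}^f\Ind_H^G(\phi)(g)=f\!\left(\frac{1}{|H|}\sum_{x^{-1}gx\in H}\phi(x^{-1}gx)\right)=\frac{1}{|H|}\sum_{x^{-1}gx\in H}f\bigl(\phi(x^{-1}gx)\bigr).$$
Now by definition of the action of $\mathcal G_n$ on class functions of $H$ (via restriction $\mathcal G_n\to\mathcal G_{|H|}$, $f\mapsto f|_{\Q_{|H|}}$), one has $f(\phi(h))={}^f\phi(h)$ for every $h\in H$. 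Substituting gives
$$\frac{1}{|H|}\sum_{x^{-1}gx\in H}{}^f\phi(x^{-1}gx)=\Ind_H^G({}^f\phi)(g),$$
which is the desired identity, valid for every $g\in G$.

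There is essentially no obstacle here; the only subtlety worth flagging in the write-up is to check that the actions of $f$ on the values of $\phi$ and on the values of $\Ind_H^G(\phi)$ are compatible, which follows from the observation that both sets of values live in $\Q_n$ and that the action of $\mathcal G_n$ on $\Irr(H)$ (and more generally on class functions) is the one coming from restriction to $\Q_{|H|}$, as explained in the introduction of the paper.
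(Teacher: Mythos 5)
Your proof is correct: the termwise application of $f$ to the Frobenius induction formula, together with the compatibility of the $\mathcal G_n$-action via restriction to $\Q_{|H|}$, is exactly the standard argument. The paper in fact states this lemma without proof, so your write-up simply supplies the routine verification the authors omitted.
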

%

\begin{proposition}
\label{prop:navarrosym}
The Navarro conjecture holds for the symmetric groups. 
\end{proposition}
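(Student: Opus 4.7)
The plan is to check that the natural McKay--Fong bijection arising from the $p$-core tower is automatically $\mathcal H_{n!}$-equivariant because, in fact, \emph{every} irreducible $p'$-character on each side is individually fixed by \emph{every} Navarro automorphism.

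First I construct the bijection $\Phi$. For $\chi_\lambda\in\Irr_{p'}(\sym_n)$, the constraint $c_k(\lambda)\in[0,p-1]$ together with the identity $|\lambda|=\sum_{k\geq 0}c_k(\lambda)p^k$ (proved by unwinding the recursion $|\mu|=|\mathcal{Cor}_p(\mu)|+p\sum_j|\mathcal{Quo}_p(\mu)_j|$) and the uniqueness of the $p$-adic expansion of $n$ forces $c_k(\lambda)=n_k$. Thus the $k$-th level of the $p$-core tower of $\lambda$ is canonically a $p^k$-multipartition $\underline\lambda_k\in\mathcal{MP}(p^k,n_k)$, and using the description~(\ref{eq:pprimenormalizer}) of $N/P'$ together with the labeling $\underline\lambda_k\leftrightarrow\psi_{\underline\lambda_k,k}$ of $\Irr(N_k)$, the assignment
$$\Phi(\chi_\lambda)=\chi_{\lambda_\emptyset}\otimes\bigotimes_{k\geq 1}\psi_{\underline\lambda_k,k},$$
viewed as a character of $N$ lifted from $N/P'$, defines a bijection $\Phi\colon\Irr_{p'}(\sym_n)\to\Irr_{p'}(N)$.

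Next I show both $\mathcal H_{n!}$-actions are trivial. On the left, each $\chi_\lambda$ is rational-valued (Murnaghan--Nakayama), so every element of $\mathcal G_{n!}\supseteq\mathcal H_{n!}$ fixes it. On the right, by Lemma~\ref{lemma:induction} applied to~(\ref{eq:psiNk}), the multiplicative structure of~(\ref{eq:jamesextension}), and the rationality of the factor $\chi_{\underline\lambda}$ (a lift of a product of symmetric-group characters), it suffices to check that each $\xi_i\in\Irr(Y)$ from~(\ref{eq:irrY}) is Navarro-fixed. For the $p-1$ linear characters, values lie in $\Q(\omega_{p-1})$; since $\omega_{p-1}^p=\omega_{p-1}$, the generator $\sigma_{n!}$ fixes $\omega_{p-1}$, and $\mathcal K_{n!}$ fixes every $p'$-root by definition, so these $\xi_i$ are Navarro-fixed. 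For the unique character of degree $p-1$ (induced from a non-trivial linear character of $\langle a\rangle\leq Y$), a direct computation using Frobenius reciprocity and the fact that $\langle b\rangle$ permutes the $p-1$ non-trivial characters of $\langle a\rangle$ in a single orbit shows that this character takes the rational values $p-1$ at the identity, $-1$ on non-identity elements of $\langle a\rangle$, and $0$ elsewhere.

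Consequently every $\psi_{\underline\lambda,k}$ is Navarro-fixed, so both $\mathcal H_{n!}$-actions are trivial and $\Phi$ is automatically equivariant. Nothing in this proof is a serious obstacle; the only points requiring care are the rationality of the degree-$(p-1)$ character of $Y$ and the observation that Navarro automorphisms fix $(p-1)$-th roots of unity because $p\equiv 1\pmod{p-1}$. The real difficulty of the paper is absent here and will appear only in the alternating-group setting, where the analogous local and global characters typically fail to be rational and Galois actions must be computed non-trivially.
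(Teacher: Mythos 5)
Your proof is correct and follows essentially the same route as the paper: the Fong parametrization via the $p$-core tower (with $c_k(\lambda)=n_k$) supplies the bijection, rationality of symmetric-group characters handles the global side, and the $\Q(\omega_{p-1})$-valuedness of $\Irr(Y)$ together with $\omega_{p-1}^p=\omega_{p-1}$, the James--Kerber extension formula~(\ref{eq:jamesextension}), and Lemma~\ref{lemma:induction} handles the normalizer side. Your explicit check that the degree-$(p-1)$ character of $Y$ is rational and your explicit description of $\Phi$ are just spelled-out versions of what the paper states or cites.
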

\begin{proof}Let $n$ be a positive integer and $p\leq n$ be a prime
number.  Since the characters of $\sym_n$ are rational-valued, {they} are
fixed by any automorphisms of $\mathcal H_{n!}$. It remains to show that
any $p'$-order irreducible characters of $N$ {is} also fixed.
From~(\ref{eq:pprimenormalizer}), it is sufficient to show that for
$k\geq 1$, the irreducible characters of $N_k$ are fixed under any $f\in
\mathcal H_{n!}$. If $f\in \mathcal K_{n!}$, then $f$ fixes any
$p'$-roots of unity. However, the values of the characters of $\Irr(Y)$
lie in $\Q(\omega_{p-1})$, and are thus fixed by $f$. Write
$\sigma=\sigma_{n!}$. We have $\sigma(x)=x^p$ for any $p'$-root of unity
$x$. Since $\omega_{p-1}$ is a $p'$-root of unity, we deduce that
$\sigma(\omega_{p-1})=\omega_{p-1}^p=\omega_{p-1}$, and $\sigma$ fixes
the characters of $\Irr(Y)$. In either case, the irreducible characters of
$Y$ are fixed by $\mathcal H_{n!}$. {Let $n=n_0+n_1p+\cdots$ be} the
$p$-adic expansion of $n$ as above and fix $k\geq 1$. Let $\underline
\lambda\in\mathcal{MP}(p^k,n_k)$. From~(\ref{eq:jamesextension}) and
the fact that $\chi_{\underline \lambda}$ is rational-valued, we obtain
that $E(\xi_{\underline\lambda})\chi_{\underline\lambda}$ is fixed under
any $f\in\mathcal H_{n!}$. Finally, we conclude using
Lemma~\ref{lemma:induction}.  
\end{proof}

\section{Alternating groups. The local case}
\label{sec:local}

For any subgroup $G$ of $\sym_n$, we set $G^+=G\cap\Alt_n$. In
particular, $[G:G^+]\leq 2$ and $G^+$ is the kernel of the restriction
to $G$ of the {sign character} of $\sym_n$, also denoted by $\sgn:G\rightarrow
\{-1,1\}$. Suppose $G^+\neq G$.  For $\chi\in\Irr(G)$, we write
$\chi^*=\chi\otimes\sgn$. By Clifford theory, if $\chi\neq\chi^*$, then
$\chi$ and $\chi^*$ restrict to an irreducible character of $G^+$ also
denoted by $\chi$. If $\chi=\chi^*$, then the restriction of $\chi$ to
$G^+$ is the sum of two irreducible characters denoted by $\chi^+$ and
$\chi^-$. Note that in this case, $\chi(g)=0$ for all $g\notin G^+$, and
$\chi(g)=\chi^+(g)+\chi^-(g)$ for $g\in G^+$. All irreducible characters
of $G^+$ are obtained {exactly} once by this process.  A \emph{split class}
$c$ of $G$ is a conjugacy class of $G$ contained in $G^+$ such that $c$
is the union of two $G^+$-classes $c^+$ and $c^-$.

Let $f$ be a Galois automorphism and $\chi\in\Irr(G)$ be such that
$\chi=\chi^*$ and ${}^f\chi=\chi$. Then {$f$ acts} on
$\{\chi^+,\chi^-\}$. We define $\varepsilon(\chi,f)\in\{-1,1\}$ by
setting $\varepsilon(\chi,f)=1$ if ${}^f\chi^+=\chi^+$ and
$\varepsilon(\chi,f)=-1$ otherwise. In particular, for $\eta\in\{-1,1\}$
we have
\begin{equation}
\label{eq:signebouge}
{}^f\chi^{\eta}=\chi^{\varepsilon(\chi,f)\eta}.
\end{equation}

\subsection{Reduction of the problem}
\label{subsec:redpb}
Let $G_1,\ldots,G_r$ be subgroups of $\sym_n$ such that
$G=G_1\times\cdots\times G_r\subseteq \sym_n$ is a direct product and assume $G_j^+\neq G_j$ for
{all} $1\leq j\leq r$. Fix $\sigma_j\in G_j\backslash G_j^+$.  Let
$\chi=\chi_1\otimes\cdots\otimes\chi_r\in\Irr(G)$ be such that
$\chi_j\in\Irr(G_j)$ for all $1\leq j\leq r$.  First, we remark that
$\sgn=\sgn\otimes\cdots\otimes\sgn\in\Irr(G)$, thus
\begin{align}
\chi^*&=\chi\otimes\sgn
\label{eq:conjugateproduit}\\
&=(\chi_1\otimes\cdots\otimes\chi_r)\otimes(\sgn\otimes\cdots\otimes\sgn)
\nonumber\\
&=(\chi_1\otimes\sgn)\otimes\cdots\otimes(\chi_r\otimes\sgn)\nonumber\\
&=\chi^*_1\otimes\cdots\otimes\chi^*_r.\nonumber
\end{align}
In particular
\begin{equation}
\label{eq:fixdirect}
\chi=\chi^*\quad\Longleftrightarrow\quad
\chi_j=\chi^*_j\quad
{\text{for all }} 1\leq j\leq r. 
\end{equation}

Suppose $\chi=\chi^*$. Write $N=G_1^+\times\cdots\times G_r^+$. Then $N$
is a normal subgroup of $G$. For
$\underline\epsilon=(\epsilon_1,\ldots,\epsilon_r)\in\{-1,1\}^r$, we set 
$$\chi_{\underline
\epsilon}=\chi_1^{\epsilon_1}\otimes\cdots\otimes\chi_r^{\epsilon_r}.$$
Consider a constituent $\phi$ of $\Res_N^G(\chi)$. There is
$\underline\alpha=(\alpha_1,\ldots,\alpha_r)\in\{-1,1\}^r$ such that
$\phi=\chi_{\underline\alpha}$. Furthermore,
$${}^x\phi=\chi_1^+\otimes\chi_2^+\otimes\cdots\otimes\chi_r^+,$$
where $x=\prod_{\alpha_j=-1}\sigma_j$. 
It follows that the $G$-orbit
of $\phi$ is $\mathcal
O=\{\chi_{\underline\epsilon}\,|\,\underline\epsilon\in\{-1,1\}^r\}$,
and Clifford theory gives
$$\Res_N^G(\chi)=\sum_{\underline\epsilon\in\{-1,1\}^r}\chi_{\underline\epsilon}.$$

On the other hand, $N$ is contained in $G^+$, thus
$\chi_1^+\otimes\chi_2^+\otimes\cdots\otimes\chi_r^+$ is a constituent
of the restriction to $N$ of either $\chi^+$ or $\chi^-$.  
Without loss of generality, we choose it to be a constituent of the restriction of $\chi^+$.
Now, for $\eta\in\{-1,1\}$, we set
$$R^\eta=\{(\epsilon_1,\ldots,\epsilon_r)\in\{-1,1\}^r\,|\,\epsilon_1\cdots\epsilon_r=\eta\}\quad\text{and}\quad\mathcal
O^\eta=\{\chi_{\underline\epsilon}\,|\,\underline\epsilon\in
R^{\eta}\}.$$
 Let $(\epsilon_1,\ldots,\epsilon_r)\in R^+$. Define
$x=\prod_{\epsilon_j=-1}\sigma_j$. Since the number of $1\leq j\leq r$ with
$\epsilon_j=-1$ is even, we deduce that $x\in G^+$ and
${}^x(\chi_1^+\otimes\chi_2^+\otimes\cdots\otimes\chi_r^+)=\chi_1^{\epsilon_1}\otimes\cdots\otimes\chi_r^{\epsilon_r}$.  
In particular, the characters of $\mathcal O^+$ lie in the same
$G^+$-orbit.  By Clifford theory, $\mathcal O$ decomposes into two
$G^+$-orbits of the same size. {Since $|\mathcal O^+|=|R^+|=
|R^-|=|\mathcal O^-|$, 
and $\mathcal O^+\sqcup \mathcal O^-=\mathcal O$}, we deduce that
$\mathcal O^+$ and $\mathcal O^-$ are the two $G^+$-orbits of $\mathcal O$. Again, by Clifford theory, we obtain that
\begin{equation}
\label{eq:restoN}
\Res_N^{G^+}(\chi^{\eta})=\sum_{\underline\epsilon\in
R^\eta}\chi_{\underline\epsilon}. 
\end{equation}

\begin{remark}
\label{rk:splitclassproduit}
{Let $g=g_1\cdots g_r\in G^+$ with $g_j\in G_j$ for $1\leq j\leq r$}. Then $g$
lies in a split class of $G$ if and only if $g_j$ lies in a split class
of $G_j$ for all $1\leq j\leq r$. 
Indeed, $g$ lies in a split class of $G$ if and only if
$\operatorname{C}_G(g)=\operatorname{C}_{G+}(g)$. Assume some $g_j$ does
not belong {to} a split class of $G_j$. If $g_j\notin G_j^+$, then
$g_j\in\operatorname{C}_{G}(g)\backslash\operatorname{C}_{G^+}(g)$. If
$g_j\in G_j^+$, then there is
$x\in\operatorname{C}_{G_j}(g_j)\backslash\operatorname{C}_{G_j^+}(g_j)$,
and $x=1\cdots 1 x 1\cdots
1\in\operatorname{C}_{G}(g)\backslash\operatorname{C}_{G^+}(g)$.
Conversely, suppose that $g_j$ lies in a split class of $G_j$ for all
$1\leq j\leq r$. Then
$\operatorname{C}_{G_j}(g_j)=\operatorname{C}_{G_j^+}(g_j)$, so that 
$$\operatorname{C}_G(g)=\operatorname{C}_{G_1}(g_1)\cdots\operatorname{C}_{G_r}(g_r)=\operatorname{C}_{G_1^+}(g_1)\cdots\operatorname{C}_{G_r^+}(g_r)=\operatorname{C}_N(g)\leq
\operatorname{C}_{G^+}(g)\leq\operatorname{C}_{G}(g),$$
and  $\operatorname{C}_G(g)=\operatorname{C}_{G^+}(g)$, as required.
\end{remark}

\begin{proposition}
\label{prop:reducbouge}
Write $G=G_1\times\cdots\times G_r\subseteq\sym_n$ as above. 
Let $\chi\in\Irr(G)$ be such that $\chi=\chi^*$, and let
$\chi_j^{+}$ and $\chi_j^-$ be as above. {For $f\in \mathcal G_{|G|}$
such that ${}^f\chi=\chi$,
with the notation~(\ref{eq:signebouge})}, we have
$$\varepsilon(\chi,f)=\prod_{j=1}^r\varepsilon(\chi_j,f).$$
\end{proposition}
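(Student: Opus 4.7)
The plan is to compute $\Res_N^{G^+}({}^f\chi^+)$ in two different ways using the explicit restriction formula~(\ref{eq:restoN}), and then to conclude by invoking the fact that $\chi^+$ (hence ${}^f\chi^+$) and $\chi^\eta$ both vanish outside $N$.

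First I would observe that since ${}^f\chi=\chi$ and $\chi=\chi_1\otimes\cdots\otimes\chi_r$ with each $\chi_j$ irreducible, uniqueness of the tensor decomposition of an irreducible character of a direct product forces ${}^f\chi_j=\chi_j$ for every $j$. This lets me apply~(\ref{eq:signebouge}) inside each $G_j$ to get ${}^f\chi_j^{\epsilon_j}=\chi_j^{\varepsilon(\chi_j,f)\epsilon_j}$, so that ${}^f\chi_{\underline\epsilon}=\chi_{\underline{\epsilon}'}$, where $\epsilon_j'=\varepsilon(\chi_j,f)\,\epsilon_j$. Setting $\eta=\prod_{j=1}^r\varepsilon(\chi_j,f)$, the map $\underline\epsilon\mapsto\underline{\epsilon}'$ is an involutive bijection of $\{-1,1\}^r$ that carries $R^+$ onto $R^\eta$, because $\prod_j\epsilon_j'=\eta\prod_j\epsilon_j$.

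Combining this with the Galois-equivariance of restriction and~(\ref{eq:restoN}) applied to $\chi^+$, one obtains
$$\Res_N^{G^+}({}^f\chi^+)=\sum_{\underline\epsilon\in R^+}{}^f\chi_{\underline\epsilon}=\sum_{\underline{\epsilon}'\in R^\eta}\chi_{\underline{\epsilon}'}=\Res_N^{G^+}(\chi^\eta).$$
Since both ${}^f\chi^+$ and $\chi^\eta$ vanish outside $N$, equality of the restrictions to $N$ forces ${}^f\chi^+=\chi^\eta$, and by the definition of $\varepsilon(\chi,f)$ in~(\ref{eq:signebouge}) this is exactly $\varepsilon(\chi,f)=\eta=\prod_{j=1}^r\varepsilon(\chi_j,f)$.

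I do not anticipate any real obstacle: the argument is essentially a bookkeeping exercise once~(\ref{eq:restoN}) is in hand. The only points demanding a little care are the orbit-label bijection $R^+\to R^\eta$ and the descent of the Galois fixation from the tensor product $\chi$ to each factor $\chi_j$, which uses the irreducibility of each $\chi_j$.
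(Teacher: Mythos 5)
Your proof is correct and follows essentially the same route as the paper: both arguments rest on the decomposition~(\ref{eq:restoN}) and on tracking how $f$ permutes the constituents $\chi_{\underline\epsilon}$, giving ${}^f\chi^+=\chi^\eta$ with $\eta=\prod_{j=1}^r\varepsilon(\chi_j,f)$. The only cosmetic difference is that you pin down ${}^f\chi^+$ by using that $\chi^{\pm}$ (hence ${}^f\chi^+$) vanishes outside $N$, whereas the paper argues that $f$ permutes $\{\mathcal O^+,\mathcal O^-\}$ and that ${}^f\chi^\eta=\chi^\eta$ if and only if $f(\mathcal O^\eta)=\mathcal O^\eta$.
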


\begin{proof}
Let $\eta\in\{-1,1\}$.
First, we remark that either ${}^f\chi^{\eta}=\chi^{\eta}$ or
${}^f\chi^{\eta}=\chi^{-\eta}$ because $f$ fixes $\chi$. So, the set 
$\mathcal
O$ is $f$-stable, and $f$ acts on $\{\mathcal O^+,\mathcal O^-\}$.
Furthermore, ${}^f\chi^{\eta}=\chi^{\eta}$ if
and only if $f(\mathcal O^\eta)=\mathcal O^\eta$. However, we
have $f(\mathcal O^+)=\mathcal O^+$ if and only if
$f(\chi_1^+\otimes\chi_2^+\otimes\cdots\otimes\chi_r^+)\in\mathcal O^+$
if and only if the number of $1\leq j\leq r$ such that $\chi_j^{+}$ are
{not} fixed by $f$ is even.  The result follows. 
\end{proof}

\subsection{Irreducible characters of $Y^k$ and of $(Y^k)^+$}
\label{subsec:irrYk}
Write $I=\{0,\ldots,p-1\}$ as above. We now describe how to construct
the characters of $\Irr(Y)$ in~(\ref{eq:irrY}). {For $0\leq j\leq
p-2$}, define the linear character $\zeta_j:Y \rightarrow \C^*$ by setting
$\zeta_j(a^ub^v)=\omega_{p-1}^{jv}$, and write $\zeta$ for the induced
character of any non-trivial character of $\langle a\rangle$ to $Y$. In
particular, $\zeta_j(1)=1$ for all $0\leq j\leq p-2$, and
$\zeta(1)=p-1$.  Set $p^*=(p-1)/2$. Since $\sgn$ is the only linear
character of $Y$ of order $2$, we have $\sgn=\zeta_{p^*}$ and
$\zeta_j^*=\zeta_{p^*+j}$.  On the other hand,
$\{0,\ldots,p-2\}=\{0,\ldots, p^*-1\}\cup \{p^*,p^*+1,\ldots,2p^*-1\}$.
So, in~(\ref{eq:irrY}) we set $\xi_{p^*}=\zeta$, $\xi_j=\zeta_j$ and
$\xi_{p-1-j}=\zeta_{p^*+j}$ for all $j\in\{0,\ldots,p^*-1\}$.

Note that $\langle a\rangle$ and $\langle b^2\rangle$ are subgroups of
$Y^+$. By an order argument, we obtain that $Y^+=\langle a\rangle \rtimes
\langle b^2\rangle$. By Clifford theory, the characters $\xi_{j}$ and
$\xi_{p-1-j}$ for $0\leq j\leq p^*-1$ restrict to the same linear
character of $Y^+$, also denoted by $\xi_{j}$, and $\xi_{p^*}$ splits
into two irreducible characters $\xi_{p^*}^+$ and $\xi_{p^*}^-$ of
degree $(p-1)/2$. Now, we will specify the values of  $\xi_{p^*}^+$ and
$\xi_{p^*}^-$. For every $0\leq j\leq p-1$, set $\alpha_j:\langle
a\rangle \rightarrow \C^*,\,a^k\mapsto \omega_p^{jk}$. 
Write $u$ for the integer such that $bab^{-1}=a^u$. Since for all $0\leq
j\leq p-1$, $0\leq k\leq p-1$, and $0\leq l\leq p-2$
$${}^{b^l}\alpha_j(a^k)=\alpha_j(a^{u^l k})=\omega_p^{u^lkj}=\alpha_{u^l j}(a^k),$$
we deduce that the $\langle b^2\rangle$-orbits on $\Irr(\langle a\rangle)$
are
$$\{\alpha_0\},\quad\{\alpha_j\,|\,j\in
S\}\quad\text{and}\quad\{\alpha_j\,|\,j\in
\overline S\},$$
where $S$ is the set of
square elements of $(\Z/(p-1)\Z)^{\times}$, and $\overline S$ the non-squares. Then by Clifford theory with
respect to the normal subgroup $\langle a\rangle$ of $Y^+$, we can
choose the labels such that
\begin{equation}
\label{eq:eqresyplustoa}
\Res_{\langle a\rangle}^{Y^+}(\xi_{p^*}^{+})=\sum_{j\in
S}\alpha_j\quad\text{and}\quad
\Res_{\langle a\rangle}^{Y^+}(\xi_{p^*}^{-})=\sum_{j\in
\overline S}\alpha_j,
\end{equation}
and the inertial subgroup in $Y^+$ of $\alpha_j$ with $j\neq 0$ is
$\langle a\rangle$, hence $\xi_{p^*}^+$ and $\xi_{p^*}^-$ are the
induced characters to $Y^+$ of $\xi_j$ with $j\in S$ and
$j\in\overline S$, respectively. Thus, $\xi_{p^*}^{\pm}$ vanishes
outside $\langle a\rangle$, and
using~(\ref{eq:eqresyplustoa}) and~\cite[Thm.\,1 p.75]{KennethRosenNrTh}, we obtain
\begin{equation}
\label{eq:cardiffY}
(\xi_{p^*}^+-\xi_{p^*}^-)(a)=\sum_{j\in
S}\alpha_j(a)-\sum_{j\in
\underline
S}\alpha_j(a)=\sum_{j=1}^{p}\left(\frac{j}p\right)\omega_p^j=i^{(p-1)/2}\sqrt
p,
\end{equation}
where $\left(\frac{j}p\right)$ is the Legendre symbol and {$i$ is a
complex square root of $-1$}. This in
particular proves that $Y$ has only one split class, with
representative $a$. We write $a^+=a$, and $a^-\in\langle a\rangle$
for an element conjugate to $a$ in $Y$ but not in $Y^+$. 

Let $k\geq 1$ {be} an integer. By Remark~\ref{rk:splitclassproduit}, the
group $Y^k$ has only one split class with representative
$\underline a={(a,a,\ldots,a)}$. Furthermore, Section \S\ref{subsec:redpb}
implies that $Y^k$ has only one $\sgn$-stable character
$\xi_{\underline p^*(k)}=\xi_{p^*}\otimes\cdots\otimes \xi_{p^*}$,
where 
\begin{equation}
\label{eq:firstetoile}
{\underline p^*(k)=((p-1)/2,\ldots,(p-1)/2)\in I^k}.
\end{equation}
Set $\alpha=\xi_{p^*}^+(a)$ and $\beta=\xi_{p^*}^-(a)$, and for
$\underline\epsilon\in\{-1,1\}^k$, denote by $\mathfrak
n(\underline\epsilon)$ {the} number of $1\leq j\leq k$ such that
$\epsilon_j=-1$. {Now,} Equation~(\ref{eq:restoN}) gives
\small
\begin{align*}
(\xi_{\underline p^*(k)}^+-\xi_{\underline p^*(k)}^-)(\underline
a)&=\sum_{\underline \epsilon\in
R^+}\chi_{\underline \epsilon}(a)-\sum_{\underline\epsilon\in
R^-}\chi_{\underline\epsilon}(a)\\
&=\sum_{0\leq 2j\leq k}\sum_{\mathfrak{n}(\underline
\epsilon)=2j}\alpha^{2j}\beta^{k-2j}
-\sum_{0<2j+1\leq k}\sum_{\mathfrak{n}(\underline
\epsilon)=2j+1}\alpha^{2j+1}\beta^{k-2j-1}
\\
&=\sum_{0\leq 2j\leq k}\binom{k}{2j}(-1)^{2j}\alpha^{2j}\beta^{k-2j}
+\sum_{0<2j+1\leq k}\binom{k}{2j+1}(-1)^{2j+1}\alpha^{2j+1}\beta^{k-2j-1}\\
&=\sum_{j=0}^{k}\binom{k}{j}(-1)^{j}\alpha^{j}\beta^{k-j}\\
&=(\alpha-\beta)^k
\end{align*}
\normalsize
by {Newton's binomial formula}. Finally we deduce
from~(\ref{eq:cardiffY}) that 
\begin{equation}
\label{eq:diffcarYk}(\xi_{\underline p^*(k)}^+-\xi_{\underline p^*(k)}^-)
(\underline a)=i^{k(p-1)/2}\sqrt{p^k}.
\end{equation}
\subsection{Irreducible characters of $(Y^k\wr\sym_w)^+$}

Let $k$ and  $w$ be two positive integers. 
In this section, we set $$N=Y^k\wr\sym_w\quad\text{and}\quad M=(Y^k)^w.$$
For $\underline j=(j_0,\ldots,j_{k-1})\in I^k$, define
$$\pi_p(\underline j)=j_{k-1}+j_{k-2}p+\cdots+j_{0}p^{k-1}.$$
By the uniqueness of the $p$-adic expansion of a positive integer, we note that the
map $\pi_p:I^k\rightarrow \{0,\ldots,p^k-1\}$ is a bijection. 

We now generalize the Equation~(\ref{eq:firstetoile}) 
defining an involution $*$ on $I^k$, by setting
$$\underline j^*=(p-1-j_0,\ldots,p-1-j_{k-1})\in I^k.$$

\begin{lemma}
\label{lemma:starYk}
For $\underline j\in I^k$, one has
$$\pi_p(\underline j^*)=p^k-1-\pi_p(\underline
j)\quad\text{and}\quad\xi_{\underline j}^*=\xi_{\underline j^*}.$$ 
\end{lemma}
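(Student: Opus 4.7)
The lemma splits into two independent statements; both are short verifications, with no real obstacle.

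For the arithmetic identity, I would just expand
\[
\pi_p(\underline j^*)=\sum_{i=0}^{k-1}(p-1-j_i)\,p^{k-1-i}
=(p-1)\sum_{i=0}^{k-1}p^{k-1-i}-\sum_{i=0}^{k-1}j_i\,p^{k-1-i},
\]
and recognize the first sum as $(p-1)\cdot\frac{p^k-1}{p-1}=p^k-1$ and the second as $\pi_p(\underline j)$. This gives $\pi_p(\underline j^*)=p^k-1-\pi_p(\underline j)$ at once.

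For the character identity, the key reduction is that $\sgn\in\Irr(Y^k)$ decomposes as $\sgn\otimes\cdots\otimes\sgn$, so by~(\ref{eq:conjugateproduit}) applied to the direct product $Y^k=Y\times\cdots\times Y$,
\[
\xi_{\underline j}^{*}
=(\xi_{j_0}\otimes\cdots\otimes\xi_{j_{k-1}})^{*}
=\xi_{j_0}^{*}\otimes\cdots\otimes\xi_{j_{k-1}}^{*}.
\]
Hence it suffices to prove the single-factor identity $\xi_j^{*}=\xi_{p-1-j}$ for each $j\in I$.

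This I would check case by case using the explicit labeling from Section~\ref{subsec:irrYk}, where $\sgn=\zeta_{p^*}$ so that $\zeta_r^{*}=\zeta_{r+p^*}$ with indices read modulo $p-1$. If $0\leq j\leq p^*-1$, then $\xi_j=\zeta_j$, so $\xi_j^{*}=\zeta_{j+p^*}=\xi_{p-1-j}$ by the chosen labeling. If $p^*+1\leq j\leq p-1$, write $j=p-1-j'$ with $0\leq j'\leq p^*-1$; then $\xi_j=\zeta_{p^*+j'}$ and $\xi_j^{*}=\zeta_{j'}=\xi_{j'}=\xi_{p-1-j}$. Finally, for the unique non-linear case $j=p^*$, the character $\xi_{p^*}=\zeta$ is the only irreducible character of $Y$ of degree $p-1$; since $\zeta\otimes\sgn$ still has degree $p-1$, it must equal $\zeta$, giving $\xi_{p^*}^{*}=\xi_{p^*}=\xi_{p-1-p^*}$ (as $p-1-p^*=p^*$). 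Combining the three cases with the display above yields $\xi_{\underline j}^{*}=\xi_{\underline j^*}$, finishing the proof.
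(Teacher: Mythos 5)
Your proof is correct and follows essentially the same route as the paper: the arithmetic identity is the same geometric-series computation, and the character identity is exactly what the labeling of $\Irr(Y)$ in \S\ref{subsec:irrYk} was designed to give ($\sgn=\zeta_{p^*}$, $\zeta_j^*=\zeta_{p^*+j}$, $\zeta^*=\zeta$), combined with the factorization of $\sgn$ over the direct product $Y^k=Y\times\cdots\times Y$. Note that the paper's written proof only displays the computation of $\pi_p(\underline j^*)$ and leaves $\xi_{\underline j}^*=\xi_{\underline j^*}$ implicit, so your case-by-case verification simply supplies details the paper omits.
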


\begin{proof}
We have
\begin{align*}
\pi_p(\underline
j^*)&=(p-1-j_{k-1})+(p-1-j_{k-2})p+\cdots+(p-1-j_{0})p^{k-1}\\
&={(p-1)(1+p+\cdots+p^{k-1})}-\pi_p(\underline j)\\
&=p^k-1-\pi_p(\underline j).
\end{align*} 
\end{proof}

We follow the convention of
Remark~\ref{rk:lexico} to label $\Irr(N)$. Moreover,
for any $\underline
\lambda=(\lambda_0,\ldots,\lambda_{p^k-1})\in\mathcal{MP}(p^k,w)$, define
$$\underline\lambda^*=(\lambda_{p^k-1}^*,\lambda_{p^k-2}^*,\ldots,\lambda_1^*,\lambda_0^*)\in\mathcal{MP}(p^k,w),$$
where $\lambda^*$ denotes the conjugate partition of $\lambda$.
To simplify the notation~(\ref{eq:psiNk}), we set
$\psi_{\underline\lambda,k}=\psi_{\underline \lambda}$.

\begin{lemma}
\label{lemma:starlocal}
If $\underline j\in I^k$ and $\underline{\lambda}=(\lambda_{\underline
j};\underline j\in I^k)\in\mathcal{MP}(p^k,w)$, then
$$\psi_{\underline\lambda}^*=\psi_{\underline\lambda^*}.$$ 
\end{lemma}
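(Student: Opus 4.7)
Since $\sgn$ is a linear character, $\psi_{\underline\lambda}^{*}=\psi_{\underline\lambda}\otimes\sgn$ is the induction from $N_{k,\underline\lambda}$ to $N_k$ of the pointwise product $(E(\xi_{\underline\lambda})\chi_{\underline\lambda})\cdot\Res_{N_{k,\underline\lambda}}^{N_k}(\sgn)$. The plan is to rewrite this inner class function and recognize it as an $N_k$-conjugate of $E(\xi_{\underline\lambda^{*}})\chi_{\underline\lambda^{*}}$ on the conjugate subgroup $N_{k,\underline\lambda^{*}}$, so that induction produces $\psi_{\underline\lambda^{*}}$.

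First I will analyze $\sgn$ on each wreath factor $Y^k\wr\sym_m\subset\sym_{p^k m}$ appearing in $N_{k,\underline\lambda}$. Since $p$ is odd, $p^k$ is odd, so a block transposition lifted from $\sym_m$ is a product of $p^k$ disjoint transpositions, hence odd. This yields the tensor factorization $\sgn(y_1,\ldots,y_m;\tau)=\prod_i\sgn_{Y^k}(y_i)\cdot\sgn(\tau)$ on the wreath product. Substituting into~(\ref{eq:jamesextension}) and regrouping the base contribution by cycles of $\sigma_{\underline j}$ via $\prod_{i\in\operatorname{supp}(\tau)}\sgn_{Y^k}((y_{\underline j})_i)=\sgn_{Y^k}(\mathfrak{c}((y_{\underline j};\sigma_{\underline j}),\tau))$, I absorb this factor into each cycle-product by Lemma~\ref{lemma:starYk}: since $\sgn_{Y^k}$ is linear, $(\xi_{\underline j}\cdot\sgn_{Y^k})(g)=\xi_{\underline j^{*}}(g)$ pointwise. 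The remaining $\sgn$-factors on the symmetric pieces combine with $\chi_{\lambda_{\underline j}}$ to give $\chi_{\lambda_{\underline j}^{*}}$ (the classical identity $\chi_\lambda\otimes\sgn=\chi_{\lambda^{*}}$).

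The inner class function thus becomes
$$\prod_{\underline j\in I^k}\Bigl[\prod_{\tau\in C(\sigma_{\underline j})}\xi_{\underline j^{*}}(\mathfrak{c}((y_{\underline j};\sigma_{\underline j}),\tau))\Bigr]\chi_{\lambda_{\underline j}^{*}}(\sigma_{\underline j}).$$
To identify this expression with $E(\xi_{\underline\lambda^{*}})\chi_{\underline\lambda^{*}}$, I will reindex using $\underline k=\underline j^{*}$ together with the formula $(\underline\lambda^{*})_{\underline k}=\lambda_{\underline k^{*}}^{*}$ (obtained from the definition of $\underline\lambda^{*}$ and Lemma~\ref{lemma:starYk}). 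The two inertial subgroups $N_{k,\underline\lambda}=\prod_{\underline j}Y^k\wr\sym_{|\lambda_{\underline j}|}$ and $N_{k,\underline\lambda^{*}}=\prod_{\underline k}Y^k\wr\sym_{|\lambda_{\underline k^{*}}|}$ are Young-type subgroups of $N_k$ with the same multiset of $\sym$-factor sizes but in different orders, hence are conjugate in $N_k$ by a suitable $\pi\in\sym_w$ realizing the involution on labels. Under this conjugation, the twisted inner character transports exactly to $E(\xi_{\underline\lambda^{*}})\chi_{\underline\lambda^{*}}$ on $N_{k,\underline\lambda^{*}}$. Since induction is invariant under $N_k$-conjugation of the source subgroup, we obtain $\psi_{\underline\lambda}^{*}=\psi_{\underline\lambda^{*}}$.

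The principal bookkeeping hurdle is tracking two simultaneous reindexings---the conjugate-partition map $\lambda\leftrightarrow\lambda^{*}$ arising from $\chi_\lambda\otimes\sgn=\chi_{\lambda^{*}}$, and the involution $\underline j\leftrightarrow\underline j^{*}$ on $I^k$ arising from Lemma~\ref{lemma:starYk}---and verifying that their composite precisely matches the involution in the definition of $\underline\lambda^{*}$ under the lex-ordering convention of Remark~\ref{rk:lexico}.
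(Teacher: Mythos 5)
Your proposal is correct and follows essentially the same route as the paper's proof: apply the projection formula to move $\sgn$ inside the induction, factor $\sgn$ on the wreath product cycle-by-cycle so it is absorbed into the cycle products (giving $\xi_{\underline j}^{*}=\xi_{\underline j^{*}}$ via Lemma~\ref{lemma:starYk}) and into $\chi_{\underline\lambda}$ (giving $\chi_{\lambda^{*}}$), and then conjugate by a permutation of $\sym_w$ matching the supports of $\lambda_{\underline j}$ and $\lambda_{\underline j^{*}}^{*}$ before using conjugation-invariance of induction. Your explicit remark that block permutations have sign $\sgn(\tau)$ because $p^{k}$ is odd is a detail the paper leaves implicit, but otherwise the arguments coincide.
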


\begin{proof}
Let $g=\prod_{\underline j\in I^k}(y_{\underline
j};\sigma_{\underline j})\in N_{k,\underline\lambda}$.
Then $g=\prod_{\underline j\in
I^k}\prod_{\tau\in C(\sigma_{\underline j})}(y_{\underline
j,\tau},\tau)$,
$$\sgn(g)=\prod_{\underline j\in
I^k}\prod_{\tau\in C(\sigma_{\underline j})}\sgn(y_{\underline
j,\tau})\sgn(\tau),$$
because $\sgn$ is a group homomorphism, and $\sgn(y_{\underline
j,\tau})=\sgn(\mathfrak c((y_{\underline j};\sigma_{\underline
j}),\tau))$. Hence
\begin{align*}
(E(\xi_{\underline \lambda})\chi_{\underline\lambda})^*(g)&=\sgn(g)E(\xi_{\underline
\lambda})(g)\chi_{\underline\lambda}(g)\\
&=\sgn(g)\prod_{\underline j\in I^k}\prod_{\tau\in C(\sigma_{\underline
j})}\xi_{\underline j}{(\mathfrak c((y_{\underline j};\sigma_{\underline
j}),\tau))}\chi_{\underline\lambda}(\sigma)\\
&=\sgn(\sigma)\prod_{\underline j\in I^k}\prod_{\tau\in C(\sigma_{\underline
j})}\sgn(\mathfrak c((y_{\underline j};\sigma_{\underline
j}),\tau))\xi_{\underline j}(\mathfrak c((y_{\underline j};\sigma_{\underline
j}),\tau))\chi_{\underline\lambda}(\sigma)\\
&=\sgn(\sigma)\prod_{\underline j\in I^k}\prod_{\tau\in C(\sigma_{\underline
j})}\xi_{\underline j}^*(\mathfrak c((y_{\underline j};\sigma_{\underline
j}),\tau))\chi_{\underline\lambda}(\sigma)\\
&=\sgn(\sigma)\prod_{\underline j\in I^k}\prod_{\tau\in C(\sigma_{\underline
j})}\xi_{\underline j^*}(\mathfrak c((y_{\underline j};\sigma_{\underline
j}),\tau))\chi_{\underline\lambda}(\sigma)\quad(\text{by
Lemma~\ref{lemma:starYk}})\\
&=\prod_{\underline j\in I^k}\prod_{\tau\in C(\sigma_{\underline
j})}\xi_{\underline j^*}(\mathfrak c((y_{\underline j};\sigma_{\underline
j}),\tau))\chi_{\underline\lambda}^*(\sigma),\\
\end{align*}
where $\sigma=\prod \sigma_{\underline j}$. 
 
Let $w_{\underline\lambda}\in \sym_w$ be the permutation that sends the
support of $\lambda_{\underline j}$ to that of $\lambda^*_{\underline j^*}$.
So,
$\sym_{\underline\lambda^*}
={}^{w_{\underline\lambda}}\sym_{\underline\lambda}$, and the decomposition of 
${}^{w_{\underline\lambda}}g$ with respect to $N_{k,\underline\lambda^*}$ is
$${}^{w_{\underline\lambda}}g=\prod_{\underline j\in
I^k}({}^{w_{\underline\lambda}}y_{\underline
j};{}^{w_{\underline\lambda}}\tau_{\underline j}),$$ and since  
{$\mathfrak c((y_{\underline j};\sigma_{\underline j}),\tau)=
\mathfrak c(({}^{w_{\underline\lambda}}y_{\underline
j};{}^{w_{\underline\lambda}}\sigma_{\underline
j}),{}^{w_{\underline\lambda}}\tau)$, we deduce that}
\begin{align*}
{}^{w_{\underline\lambda}}E(\xi_{\underline\lambda^*})(g)&=\prod_{\underline j\in
I^k}\prod_{\tau\in C(\sigma_j)}\xi_{\underline j^*}(\mathfrak
c(({}^{w_{\underline\lambda}}y_{\underline j};{}^{w_{\underline\lambda}}\sigma_{\underline
j}),{}^{w_{\underline\lambda}}\tau))
\\&=\prod_{\underline j\in
I^k}\prod_{\tau\in C(\sigma_j)}\xi_{\underline j^*}(\mathfrak c((y_{\underline j};\sigma_{\underline
j}),\tau)).
\end{align*}
Since
${}^{w_{\underline\lambda}}\chi_{\underline\lambda^*}=\chi_{\underline\lambda}^*$, we obtain
\begin{equation}
\label{eq:eqpr}
(E(\xi_{\underline\lambda})\chi_{\underline\lambda})^*={}^{w_{\underline\lambda}}(E(\xi_{\underline\lambda^*})\chi_{\underline\lambda^*}). 
\end{equation}
It follows that
\begin{align*}
\psi_{\underline\lambda}^*&=\sgn\Ind_{N_{k,\underline\lambda}}^{N}(E(\xi_{\underline\lambda})\chi_{\underline\lambda})\\
&=\Ind_{N_{k,\underline\lambda}}^{N}(\sgn
E(\xi_{\underline\lambda})\chi_{\underline\lambda})\\
&=\Ind_{N_{k,\underline\lambda}}^{N}(E(\xi_{\underline\lambda})\chi_{\underline\lambda})^*\\
&=\Ind_{w_{\underline\lambda}^{-1}N_{k,\underline\lambda^*}w_{\underline\lambda}}^{N}\!\!\!{}^{w_{\lambda}}(E(\xi_{\underline\lambda^*})\chi_{\underline\lambda^*})\\
&=\Ind_{N_{k,\underline\lambda^*}}^{N}\!\!\!(E(\xi_{\underline\lambda^*})\chi_{\underline\lambda^*})\\
&=\psi_{\underline\lambda^*},
\end{align*}as required.
\end{proof}

\bigskip
\begin{lemma}
\label{lemma:extensionsemidirect}
Let $G$ be a group, and $H$ and $K$ be subgroups of $G$. Let
$x\in G$ be such that $x$ normalizes $H$ and $K$, and $H\cap \langle
x\rangle=K\cap \langle x\rangle=1$. {Let $t\in\operatorname{N}_K(H)$}.
For every 
$g\in\langle tx\rangle$, write $g=g_tg_x$
for unique
$g_t\in K$ and $g_x\in\langle x\rangle$. Assume there is a
representation $\rho:H\rightarrow \GL(V)$ that extends to a
representation
$\widetilde{\rho}:H\rtimes\langle x\rangle\mapsto\GL(V)$. 
If $\rho({}^{g_t}h)=\rho(h)$ for all $h\in H$ and $g\in\langle
tx\rangle$,
then the map
{$$\varphi:H\rtimes\langle tx\rangle\rightarrow\GL(V),\,hg\mapsto
\widetilde{\rho}(hg_x)$$}
is a representation of $H\rtimes\langle tx\rangle$. 
\end{lemma}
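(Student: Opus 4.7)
The plan is to verify directly that $\varphi$ is a group homomorphism (that its image lies in $\GL(V)$ is automatic from the definition). Note first that $tx$ normalizes $H$, since both $t\in\operatorname{N}_K(H)$ and $x$ do, so the external semidirect product $H\rtimes\langle tx\rangle$ is well-defined via conjugation. Before touching $\varphi$ itself I would establish an auxiliary fact: the component map $g\mapsto g_x$ is a group homomorphism $\langle tx\rangle\to\langle x\rangle$. To see this, writing $g_i=(g_i)_t(g_i)_x$, I compute
\[
g_1g_2 = (g_1)_t\cdot{}^{(g_1)_x}(g_2)_t\cdot (g_1)_x(g_2)_x,
\]
and ${}^{(g_1)_x}(g_2)_t\in K$ because $x$ normalizes $K$. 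Uniqueness of the decomposition then yields $(g_1g_2)_x=(g_1)_x(g_2)_x$ and $(g_1g_2)_t=(g_1)_t\cdot{}^{(g_1)_x}(g_2)_t$.

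I would then compare the two sides of the desired identity $\varphi(h_1g_1)\varphi(h_2g_2)=\varphi((h_1g_1)(h_2g_2))$ for $h_i\in H$ and $g_i\in\langle tx\rangle$. Using the multiplication rule in $H\rtimes\langle tx\rangle$ together with the previous step, the right-hand side equals
\[
\widetilde{\rho}\bigl(h_1\cdot{}^{g_1}h_2\cdot (g_1)_x(g_2)_x\bigr)=\rho(h_1)\,\rho({}^{g_1}h_2)\,\widetilde{\rho}\bigl((g_1)_x(g_2)_x\bigr),
\]
whereas the left-hand side, expanded using that $\widetilde{\rho}$ is a representation of $H\rtimes\langle x\rangle$, becomes
\[
\widetilde{\rho}(h_1(g_1)_x)\widetilde{\rho}(h_2(g_2)_x)=\rho(h_1)\,\rho\bigl({}^{(g_1)_x}h_2\bigr)\,\widetilde{\rho}\bigl((g_1)_x(g_2)_x\bigr).
\]

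The whole verification therefore reduces to the single identity $\rho({}^{g_1}h_2)=\rho\bigl({}^{(g_1)_x}h_2\bigr)$. Setting $h':={}^{(g_1)_x}h_2$, which lies in $H$ because $x$ normalizes $H$, and writing $g_1=(g_1)_t(g_1)_x$, this becomes $\rho({}^{(g_1)_t}h')=\rho(h')$, which is precisely the hypothesis of the lemma applied to $g=g_1$ and $h=h'$. The main subtlety -- less an obstacle than the conceptual heart of the argument -- is keeping the two semidirect decompositions separate: $x$ normalizing $H$ is what makes the hypothesis applicable, by ensuring $h'\in H$, while $x$ normalizing $K$ is what makes the component map $g\mapsto g_x$ multiplicative and hence allows the $\widetilde{\rho}$-factors on both sides to match.
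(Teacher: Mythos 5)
Your proof is correct and follows essentially the same route as the paper's: both establish that the component map $g\mapsto g_x$ on $\langle tx\rangle$ is multiplicative (using that $x$ normalizes $K$ and $K\cap\langle x\rangle=1$), and then reduce the homomorphism property of $\varphi$ to the hypothesis via the factorization ${}^{g}h={}^{g_t}({}^{g_x}h)$. The only difference is organizational—you expand both sides separately and isolate the single identity $\rho({}^{g_1}h_2)=\rho({}^{(g_1)_x}h_2)$, while the paper runs the same computation as one chain of equalities.
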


\begin{proof}
First, we remark that if $g=\langle tx\rangle$, then there is an integer
$j$  such that
$g=(tx)^j=t{}^xt\cdots {}^{x^{j-1}}t x^j$, so $g_t=t{}^xt\cdots
{}^{x^{j-1}}t\in K$ and $g_x=x^j$ because $x$ normalizes $K$.
Furthermore, this expression is unique because $K\cap \langle x\rangle=1$. 
Note also that
if $g,\,g'\in \langle tx\rangle$, then $(gg')_{x}=g_xg'_x$. 
Now, for $h,\,h'\in H$ and $g,\,g'\in \langle tx\rangle$, we have
$hgh'g'=h{}^gh'gg'=h{}^gh'(gg')_{t}g_xg'_x$. Thus,
\begin{align*}
\varphi(hgh'g')&=\widetilde\rho(h{}^gh'g_xg'_{x})\\
&=\rho(h)\rho({}^{g_t}({}^{g_x}h'))\widetilde{\rho}(g_xg'_x)\\
&=\rho(h)\rho({}^{g_x}h')\rho(g_xg'_x)\\
&=\widetilde{\rho}(h{}^{g_x}h'g_xg'_x)\\
&=\widetilde{\rho}(hg_xh'g'_x)\\
&=\widetilde{\rho}(hg_x)\widetilde{\rho}(h'g'_x)\\
&=\varphi(hg)\varphi(h'g'),
\end{align*} 
as required.
\end{proof}

A multipartition $\underline\lambda=(\lambda_{\underline j};\,j\in
I^k)\in\mathcal{MP}(p^k,w)$ is called \emph{symmetric} if
$\underline\lambda^*=\underline\lambda$. We denote by
{$\mathcal{SP}({p^k,w})$} the set of symmetric multipartitions of
{$\mathcal{MP}({p^k,w})$.}
Let $\underline c=(c_{\underline j}\in\N;\,\underline j\in I^k)$ be such that
$\sum_{\underline j}c_{\underline j}=w$, and
$c_{\underline j}=c_{\underline j^*}$ for all $\underline j\in I^k$.
Define 
$$\mathcal P_{\underline
c}=\{\underline\lambda\in{\mathcal{SP}({p^k,w})}\,|\,\ \forall \underline
j\in I^k, |\lambda_{\underline
j}|=c_{\underline j}\}.$$

For any $\underline\lambda\in\mathcal P_{\underline c}$, the characters
$\xi_{\underline\lambda}$ and their inertial subgroup
$N_{k,\underline\lambda}$ depend only on $\underline c$.
We write
$\xi_{\underline c}$ and $N_{\underline c}$ in the following.

\begin{proposition}
\label{prop:signeregular}
Let $\underline\lambda\in\mathcal{SP}(p^k,w)$ be such that
$\lambda_{\underline p^*(k)}=\emptyset$.
If $f\in\mathcal K_{n!/2}$, then 
$\varepsilon(\psi_{\underline\lambda},f)=1$.
Furthermore,
$$\varepsilon(\psi_{\underline\lambda},\sigma_{n!/2})=(-1)^{\frac{(p-1)w}{4}}.$$
\end{proposition}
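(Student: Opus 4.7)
My plan is to compute $(\psi_{\underline\lambda}^+ - \psi_{\underline\lambda}^-)$ explicitly on a carefully chosen split class of $N=Y^k\wr\sym_w$, then read off the Navarro action from the cyclotomic structure of the value. The hypothesis $\lambda_{\underline p^*}=\emptyset$ is essential: it removes every $\xi_{\underline p^*}$-contribution, and hence every Gauss sum $\sqrt{p}$, from the computation, leaving only $p'$-roots of unity and a single $i^{w/2}$ as the transcendental content.

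First I set up the Clifford framework. Since $\underline\lambda$ is symmetric with $\lambda_{\underline p^*}=\emptyset$, the non-empty components pair as $\{\underline j,\underline j^*\}$ with $\underline j\ne\underline j^*$, and the permutation $w_{\underline\lambda}\in\sym_w$ of Lemma~\ref{lemma:starlocal} is a product of exactly $w/2$ disjoint transpositions, so $w_{\underline\lambda}^2=1$ and $\sgn(w_{\underline\lambda})=(-1)^{w/2}$. The proof of Lemma~\ref{lemma:starlocal} gives $\Psi^*={}^{w_{\underline\lambda}}\Psi$, so the restriction $\Psi^\circ:=\Psi|_{N_{k,\underline\lambda}^+}$ is irreducible (as $\Psi\neq\Psi^*$ are its two distinct extensions to $N_{k,\underline\lambda}$). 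Because $N_{k,\underline\lambda}$ contains odd elements, $N=N_{k,\underline\lambda}N^+$ and Mackey gives $\psi_{\underline\lambda}|_{N^+}=\Ind_{N_{k,\underline\lambda}^+}^{N^+}\Psi^\circ$. Let $T\le N^+$ be the inertia of $\Psi^\circ$ in $N^+$; then $[T:N_{k,\underline\lambda}^+]=2$, with coset representative produced from $w_{\underline\lambda}$, and by Lemma~\ref{lemma:extensionsemidirect} there are two extensions $\widetilde\Psi^\pm$ of $\Psi^\circ$ to $T$ differing by the non-trivial linear character of $T/N_{k,\underline\lambda}^+$, so that $\psi_{\underline\lambda}^\pm=\Ind_T^{N^+}\widetilde\Psi^\pm$.

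Next I select a representative $g\in N^+$ of a split class. Set $g=(y;w_{\underline\lambda})$ with $y\in(Y^k)^w$ chosen so that each 2-cycle of $w_{\underline\lambda}$ has cycle product $\underline a$, inserting at most one $b$-factor in some coordinate to guarantee $\sgn(g)=+1$. The centralizer $\Cen_N(g)$ is generated by $\langle g\rangle$, by diagonal copies of $\Cen_{Y^k}(\underline a)=\langle a\rangle^k\subseteq(Y^k)^+$ acting through each pair, and by permutations swapping pairs that carry identical local data (each a product of two transpositions, hence even); all generators lie in $N^+$, so $g$ is in a split class. Since every 2-cycle of $g$ mixes two distinct supports $B_{\underline j}$ and $B_{\underline j^*}$, no $N$-conjugate of $g$ lies in $N_{k,\underline\lambda}$, so $\psi_{\underline\lambda}(g)=0$ and $(\psi_{\underline\lambda}^+-\psi_{\underline\lambda}^-)(g)=2\psi_{\underline\lambda}^+(g)$. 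Expanding $\psi_{\underline\lambda}^\pm(g)$ via Frobenius and Lemma~\ref{lemma:extensionsemidirect}, using the values $\xi_{\underline j}(\underline a)=(-1)^{\#\{\ell\,:\,j_\ell=p^*\}}\in\Z$, I expect
\[
(\psi_{\underline\lambda}^+-\psi_{\underline\lambda}^-)(g)=\eta\cdot i^{w/2},\qquad \eta\in\Q(\omega_{p-1}),
\]
the factor $i^{w/2}$ arising because each of the $w/2$ pairs contributes one square root $\sqrt{-1}=i$ at the extension step of Lemma~\ref{lemma:extensionsemidirect}.

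Since $p$ is odd, both $\omega_{p-1}$ and $i=\omega_4$ are $p'$-roots of unity, so every $f\in\mathcal K_{n!/2}$ fixes the right-hand side above, whence $\varepsilon(\psi_{\underline\lambda},f)=1$. For $\sigma=\sigma_{n!/2}$ one has $\sigma(\omega_{p-1})=\omega_{p-1}^{\,p}=\omega_{p-1}$, so $\sigma(\eta)=\eta$, while $\sigma(i)=i^{\,p}=(-1)^{(p-1)/2}i$; therefore $\sigma(i^{w/2})/i^{w/2}=(-1)^{(p-1)w/4}$, giving $\varepsilon(\psi_{\underline\lambda},\sigma_{n!/2})=(-1)^{(p-1)w/4}$. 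The hardest step is verifying that exactly one factor of $i$ (and not a higher root of unity or a $\sqrt{p}$) is produced per pair: this comes down to identifying $\rho(x^2)=-\mathrm{id}$ for a suitable coset representative $x$ of $T/N_{k,\underline\lambda}^+$, and the hypothesis $\lambda_{\underline p^*}=\emptyset$ is indispensable here, excluding the non-linear characters $\xi_{\underline p^*}^\pm$ whose splitting would otherwise introduce Gauss sums $\sqrt{p}$ moved non-trivially by $\mathcal K_{n!/2}$.
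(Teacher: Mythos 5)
Your overall framework is the same as the paper's: restrict to the index-two inertia subgroup over $N_{k,\underline\lambda}^+$, realize $\psi_{\underline\lambda}^{\pm}$ as the two induced extensions, and decide whether a Navarro automorphism swaps them by evaluating at an element lying over $w_{\underline\lambda}$. The problem is that the one step you defer (``I expect \dots $\eta\cdot i^{w/2}$'', ``the hardest step is verifying that exactly one factor of $i$ is produced per pair'') \emph{is} the proposition. In the paper this is where all the work happens: when $w\equiv 2\bmod 4$ the representative must be taken of the form $t_{\underline c}=hw_{\underline c}$ with a single $b$ inserted (so it has order $2(p-1)$), the extension is built with Lemma~\ref{lemma:extensionsemidirect}, the value of the genuine extension at $t_{\underline c}$ is pinned down as a nonzero integer times $\omega_{2(p-1)}^{s}$ by comparing the two computations of its value at $t_{\underline c}^2$, and the parity of $s$ (even iff $p\equiv 1\bmod 4$) is exactly what produces $(-1)^{(p-1)w/4}$ via the criterion~(\ref{eq:criterebouge}); one also needs the integrality of the Gallagher twist characters of $T_{\underline c}/M^+$, which the paper proves by embedding into products $\sym_{c_{\underline j}}\wr\sym_2$ and using~(\ref{eq:jamesextension}). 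Your heuristic ``one $i$ per pair'' does not describe what actually occurs: there is a single irrational factor $\omega_{2(p-1)}^{s}$ coming from the unique odd coordinate, and determining whether it is an odd power of $i$ times an element of $\Q(\omega_{p-1})$ is precisely the open step, so the final sign is asserted rather than proved.

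Several concrete assertions are also wrong or unjustified. First, ``no $N$-conjugate of $g$ lies in $N_{k,\underline\lambda}$, so $\psi_{\underline\lambda}(g)=0$'' fails whenever every nonzero $c_{\underline j}$ is even: then the fixed-point-free involution $w_{\underline\lambda}$ has conjugates inside $\sym_{\underline\lambda}$ and $\psi_{\underline\lambda}(g)$ can be a nonzero element of $\Q(\omega_{p-1})$, so $(\psi^+-\psi^-)(g)=2\psi^+(g)$ is false as stated (this is repairable, since $\psi_{\underline\lambda}(g)\in\Q(\omega_{p-1})$ is fixed by $\mathcal H_{n!/2}$ by the argument of Proposition~\ref{prop:navarrosym}, but you would have to say so). Second, the element with all cycle products equal to $\underline a$ has $\sgn(g)=(-1)^{w/2}$ by~(\ref{eq:sgnwc}), so it is even only when $w\equiv 0\bmod 4$; once you insert one $b$ the cycle product of that cycle is no longer $\underline a$, and the $b$ must be placed in a coordinate $\ell$ with $j_\ell\neq (p-1)/2$ (possible because $\underline j\neq \underline p^*$, but needing care when $k\geq 2$), since otherwise $\xi_{\underline j}$ vanishes on that cycle product and your distinguishing value could be $0$ -- this is exactly the paper's choice of the coordinate $u_r$ with $r\neq (p-1)/2$. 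Third, nonvanishing of $\eta$ is never established; membership of $g$ in a split class does not imply that the difference of this particular pair of constituents is nonzero there, and the paper obtains nonvanishing from the explicit (positive integer) James--Kerber values in~(\ref{eq:jamesextension}). Until the value computation and these choices are carried out, the proposal reduces the proposition to an unproved claim rather than proving it.
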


\begin{proof}
Let $\underline c=(c_{\underline j};\,\underline j\in I^k)$ be such that
$c_{\underline p^*(k)}=0$. {Furthermore, since $\underline\lambda$ is a
symmetric multipartition, $c_{\underline j}=c_{\underline j^*}$ and it
follows that $$w=\sum_{\{\underline
j,\underline j^*\},\, \underline j\neq \underline p^*(k)}(c_{\underline
j}+c_{\underline j^*})=2\sum_{\{\underline
j,\underline j^*\},\, \underline j\neq \underline p^*(k)}c_{\underline
j},$$
hence $w$ is even.}
By Clifford theory with respect to the normal subgroup $M$ of $N$, the
characters $\psi_{\underline\lambda}$ for $\underline\lambda\in\mathcal
P_{\underline c}$ are the constituents of
$\Ind_M^{N}(\xi_{\underline c})$. Write
$\vartheta_{\underline c}$ for the restriction of
$\xi_{\underline c}$ to $M^+$. Since $\xi_{\underline c}$ is
not $\sgn$-stable, we have $\vartheta_{\underline c}\in\Irr(M^+)$
by Clifford theory {with respect to} 
$M^+\unlhd M$. Furthermore, Mackey's formula
gives
{$$\Res_{N^+}^{N}\Ind_{M}^{N}(\xi_{\underline
c})=\Ind_{M^+}^{N^+}(\vartheta_{\underline c}).$$}
Hence, the irreducible characters $\psi_{\underline\lambda}^+$ and
$\psi_{\underline\lambda}^-$ for $\underline\lambda\in\mathcal
P_{\underline c}$ appear in the Clifford theory with respect to
$M^+\unlhd N^+$ associated to the character
$\vartheta_{\underline c}$.
Denote by $T_{\underline c}$ the inertial subgroup of
$\vartheta_{\underline c}$ with respect to $M^+\unlhd N^+$.

Let $\underline\lambda\in\mathcal P_{\underline c}$. The character
$\vartheta_{\underline c}$ is $M$-stable, thus $$\langle
\Ind_{M^+}^{G^+}(\vartheta_{\underline c}),\psi_{\underline\lambda}^+\rangle=
\langle
\Ind_{M^+}^{G^+}(\vartheta_{\underline c}),\psi_{\underline\lambda}^-\rangle.$$
We also have
$\Ind_{M^+}^M(\vartheta_{\underline c})=\xi_{\underline
c}+\xi_{\underline c}^*$,
and the last two characters are $N$-conjugate, in particular,
$\Ind_M^{N}(\xi_{\underline c})=\Ind_M^{N}(\xi_{\underline c}^*)$.
Now, we deduce from Frobenius reciprocity that
\begin{align*}
\langle\Ind_{M^+}^{N^+}(\vartheta_{\underline c}),\psi_{\underline\lambda}^+\rangle&=
\frac 1 2
\langle\Ind_{M^+}^{N^+}(\vartheta_{\underline c}),\psi_{\underline\lambda}^++\psi_{\underline\lambda}^-\rangle\\&=
\frac 1 2
\langle\Ind_{M^+}^{N^+}(\vartheta_{\underline c}),\Res_{N^+}^{N}(\psi_{\underline\lambda})\rangle\\&=
\frac 1 2
\langle\Ind_{M^+}^{N}(\vartheta_{\underline c}),\psi_{\underline\lambda}\rangle\\
&=
\frac 1 2
\langle\Ind_{M}^{N}\Ind_{M^+}^M(\vartheta_{\underline c}),\psi_{\underline\lambda}\rangle\\
&=\frac 1 2
\langle \Ind_{M}^{N}(\xi_{\underline c}+\xi_{\underline c}^*),\psi_{\underline\lambda}\rangle\\
&=\frac 1 2
\langle 2\Ind_{M}^{N}(\xi_{\underline c}),\psi_{\underline\lambda}\rangle\\
&=\langle\Ind_{M}^{N}(\xi_{\underline c}),\psi_{\underline\lambda}\rangle.
\end{align*} 
Let $t$ and $t'$ be the number\ of $N$-conjugate characters of
$\xi_{\underline c}$ and of $N^+$-conjugate characters of
$\vartheta_{\underline c}$, respectively. Then, by Clifford theory,
if
$e=\langle\Ind_{M}^{N}(\xi_{\underline c}),\psi_{\underline c}\rangle$,
then
$$\psi_{\underline\lambda}(1)=et\xi_{\underline c}(1)\quad\text\quad
\psi_{\underline\lambda}(1)^+=et'\vartheta_{\underline c}(1).$$
Hence, $2t'=t$ because $\vartheta_{\underline c}(1)=\xi_{\underline
c}(1)$ and
$2\psi_{\underline\lambda}(1)^+=\psi_{\underline\lambda}(1)$.

Note that $N_{\underline c}^+\leq T_{\underline c}$ and that
$N/N_{\underline c}\simeq N^+/N_{\underline c}^+$, and
$$t=\frac{|N|}{|N_{\underline c}|}=\frac{|N^+|}{|N_{\underline c}^+|}\quad\text{and}\quad
t'=\frac{|N^+|}{|T_{\underline c}|}.$$
Then $T_{\underline c}$ is an extension of degree $2$ of
$N_{\underline c}^+$. 
Since $\underline\lambda$ is symmetric and $\lambda_{\underline
p^*(k)}=\emptyset$, the permutation
$w_{\underline\lambda}$ defined in the proof of
Lemma~\ref{lemma:starlocal} is an involution that exchanges the supports of
$\lambda_{\underline j}$ and $\lambda_{\underline j^*}$ for all
$\underline j\in I^k$. We remark that $w_{\underline\lambda}$ is the
same element for any $\underline\lambda\in \mathcal P_{\underline c}$, we will denote it by $w_{\underline c}$.
Denote by $\theta_{\underline\lambda}$ the
restriction  of $E(\xi_{\underline\lambda})\chi_{\underline\lambda}$ to
$N_{\underline\lambda,c}^+$ which is irreducible because
$E(\xi_{\underline\lambda})\chi_{\underline\lambda}\neq
(E(\xi_{\underline\lambda})\chi_{\underline\lambda})^*$. 
Then for all $g\in N^+$, 
\begin{equation}
\label{eq:rest}
\theta_{\underline\lambda}(g)=E(\xi_{\underline\lambda})\chi_{\underline\lambda}(g)=(E(\xi_{\underline\lambda})\chi_{\underline\lambda})^*(g)={}^{w_{\underline
c}}(E(\xi_{\underline\lambda})\chi_{\underline\lambda})(g)={}^{w_{\underline
c}}\theta_{\underline\lambda}(g)
\end{equation}
by Equation~(\ref{eq:eqpr}).
Let $h\in N_{\underline c}\backslash N_{\underline c}^+$.
We set $t_{\underline c}=w_{\underline c}$ if $w_{\underline c}\in
N^+$, and $t_{\underline c}=hw_{\underline c}$ otherwise. We remark
that
\begin{equation}
\label{eq:sgnwc} 
\sgn(w_{\underline c})=(-1)^{w/2}.
\end{equation}

Now, we define $\underline\mu$ as follows. For any $\underline j\in I^k$
such that $\pi_{p}(\underline j)<(p^k-1)/2$, set $\mu_{\underline
j}=(c_{\underline j})$ and $\mu_{\underline
j^*}=(1^{c_{\underline j}})$, and $\mu_{\underline p^*(k)}=0$. So, $\mu\in\mathcal P_{\underline c}$, and
$\Res_{M}^{N_{\underline c}}(\theta_{\underline\mu})=\xi_{\underline c}$. In
particular, Equation~(\ref{eq:rest}) gives
$$T_{\underline c}=\langle
N_{\underline c}^+,t_{\underline c}\rangle.$$ 
Since $T_{\underline c}$ is a cyclic extension of $N_{\underline
c}^+$, by~\cite[11.22]{isaacs} we can extend $\theta_{\underline\mu}$ to a character
$\widetilde{\theta}_{\underline\mu}$ of $T_{\underline c}$. Thus, by Gallagher's theorem (see~\cite[6.17]{isaacs}), the constituents of
$\Ind_{M^+}^{N^+}(\vartheta_{\underline c})$ are 

\begin{equation}
\label{eq:defrhoalpha}
\rho_{\alpha}=\Ind_{T_{\underline
c}}^{N^+}(\widetilde{\theta}_{\underline\mu}\otimes\alpha),
\end{equation}
where $\alpha$ is any irreducible characters of $T_{\underline c}/M^+$
lifted through $T_{\underline c}\rightarrow T_{\underline c}/M^+$.

If we write
$H_{\underline c}=\langle N_{\underline c},t_{\underline
c}\rangle$, then $H_{\underline c}^+=T_{\underline c}$ and
$$H_{\underline c}/M\simeq T_{\underline c}/M^+.$$
However, if we choose $h\in M\backslash M^+$, then $t_{\underline
c}^2\in M$ and the image of $t_{\underline c}$ in $H_{\underline c}/M$
has order $2$, and can be identified with $w_{\underline c}$. It follows
that
$$H_{\underline c}/M\simeq \sym_{\underline c}\rtimes \langle
w_{\underline c}\rangle.$$
Set $L=\sym_{\underline c}\rtimes\langle w_{\underline c}\rangle$. We
now will prove that the irreducible characters of $L$ are
integer-valued. Let $\phi\in\Irr(\sym_{\underline c})$. If $\phi$ is not
$L$-stable, then $\widetilde{\phi}=\Ind_{\sym_{\underline c}}^{L}(\phi)\in\Irr(L)$ 
and $\widetilde{\phi}(g)=\phi(g)+{}^{w_{\overline c}}\phi(g)\in \Z$ if
$g\in \sym_{\underline c}$ and $0$ otherwise.

Assume $\phi$ is $L$-stable. Then $\phi$ extends to $L$ (because $L$ is
a cyclic extension of $\sym_{\underline c}$) and has exactly two
extensions $\widetilde{\phi}$ and $\widetilde{\phi}\otimes\varepsilon$,
where $\varepsilon$ is the lift of the non-trivial character of $\langle
w_{\underline c}\rangle$.
Now, for $\underline j\in I^k$ such that $\pi_p(\underline j)<(p^k-1)/2$,
write $\tau_{\underline j}$ for the involution that exchanges the
supports of $c_{\underline j}$ and $c_{\underline j^*}$. One has
$w_{\underline c}=\prod \tau_{\underline j}$. Since $c_{\underline
j}=c_{\underline j^*}$, the group $L$ can be viewed as a subgroup of
$$L'=\prod_{\pi_p(\underline
j)<(p^k-1)/2}\sym_{c_{\underline j}}\wr \langle\tau_{\underline
j}\rangle.$$
Since $\phi$ is $L$-stable, we must have $\phi_{\underline j}=\phi_{\underline
j^*}$, and $\phi$ is $\tau_{\underline j}$ stable for all $\underline
j$. Thus, $\phi$ is $L'$-stable and can be extend to $L'$ because $L'$
is a direct product of wreath products isomorphic to
$\sym_{c_{\underline j}}\wr\sym_2$. Denote by $E(\phi)$
the James-Kerber extension as above. By~(\ref{eq:jamesextension}),
$E(\phi)$ takes integer values. However,
$\Res^{L'}_{L}(E(\phi))$ is either $\widetilde \phi$ or
$\widetilde\phi\otimes\varepsilon$. Thus, $\widetilde\phi$ and
$\widetilde\phi\otimes\varepsilon$ also take integer values.

The argument above implies that any $\alpha\in\Irr(T_{\underline c}/M^+)$ takes integer
values. Let $f\in\mathcal H_{n!/2}$.  
By Proposition~\ref{prop:navarrosym}, $\theta_{\underline\mu}$ is
$f$-fixed. The two extensions of $\theta_{\underline\mu}$ to
$T_{\underline c}$ are $\widetilde{\theta}_{\underline\mu}$ and
$\widetilde{\theta}_{\underline\mu}\otimes \varepsilon$. Thus, either
$f(\widetilde{\theta}_{\underline\mu})=\widetilde{\theta}_{\underline\mu}$
or
$f(\widetilde{\theta}_{\underline\mu})=\widetilde{\theta}_{\underline\mu}\otimes\varepsilon$.
Then (\ref{eq:defrhoalpha}) and Lemma~\ref{lemma:induction} give
$f(\rho_{\alpha})=\rho_{\alpha}$ in the first case, and
$f(\rho_{\alpha})=\rho_{\alpha\otimes\varepsilon}$ in the second case.

On the other hand, 
$f(\widetilde{\theta}_{\underline\mu})=\widetilde{\theta}_{\underline\mu\otimes\varepsilon}$
if and only if $f(\widetilde{\theta}_{\underline\mu}(gt_{\underline
c}))=-\widetilde{\theta}_{\underline\mu}(gt_{\underline
c})$ for all {$g\in N_{\underline c}^+$} if and only if there exists
{$g_0\in N_{\underline c}^+$} such that
\begin{equation}
\label{eq:criterebouge}
\widetilde{\theta}_{\underline\mu}(g_0t_{\underline
c})\neq 0\quad \text{and}
\quad f(\widetilde{\theta}_{\underline\mu}(g_0t_{\underline
c}))=-\widetilde{\theta}_{\underline\mu}(g_0t_{\underline
c}).
\end{equation}

We will use this {criterion} to understand the action of $f$ on
$\widetilde\theta_{\underline\mu}$. 
Set $H=((Y^k)^+)^w$ and 
$$G=H\rtimes \langle w_{\underline c}\rangle.$$
For $\underline j$ such that $\pi_p(\underline j)<(p^k-1)/2$, define
$Y_{c_{\underline j}}=((Y^k)^+)^{2c_{\underline j}}\leq H$ 
corresponding to the supports of $\sym_{c_{\underline j}}$ and $\sym_{c_{\underline
j^*}}$. Then $G$ can be viewed as a subgroup of 
$$G'=\prod_{\pi_p(\underline j)<(p^k-1)/2} Y_{c_{\underline j}}\wr\langle \tau_{\underline j}\rangle,$$
where $\tau_{\underline j}$ is defined as before.

The character $\xi_{\underline c}$ is not $\sgn$-stable. It takes
non-zero values outside $M^+$, hence outside $H$, and
the restriction $\eta_{\underline c}$ of $\xi_{\underline c}$ to
$H$ is irreducible by Clifford theory with respect to
$H\unlhd M^+$. 
Moreover, if we write 
$\eta_{\underline j}=\Res^{Y^k}_{(Y^k)^+}(\xi_{\underline j})$, then
$\eta_{\underline j^*}=\eta_{\underline j}$. In particular, 
$$\eta_{\underline c}=\prod_{\pi_p(\underline
j)<(p^k-1)/2}\eta_{\underline j}^{c_{\underline
j}}\otimes\eta_{\underline j}^{c_{\underline j}}.$$
It follows that $\eta_{\underline c}$ extends to $G^+$, and
the James-Kerber extension $E(\eta_{\underline c})$ has integer values.
Hence $\Res_G^{G'}(E(\eta_{\underline c}))$ takes integer values, and by
Gallagher's theorem, the extension of $\eta_{\underline c}$ to $G$ takes a
non-zero and integer value on $w_{\underline c}$. 

Suppose $w\equiv 0\mod 4$. Then
by~(\ref{eq:sgnwc}), we take $t_{\underline c}=w_{\underline
c}$. 
By the previous discussion, $$\theta_{\underline c}(w_{\underline
c})=\pm
\Res_G^{G'}(E(\eta_{\underline c}))(w_{\underline c})\in\Z$$ is {a
non-zero integer}. We deduce from {criterion~(\ref{eq:criterebouge})} that
the characters of $N^+$ are fixed by all $f\in\mathcal H_{n!/2}$.

Suppose $w\equiv 2\mod 4$. 
Let $y\in M$. We label the components of $y$ as follows. For $\underline
j\in I^k$ such that $c_{\underline j}\neq 0$, write
$y_{\underline j}=(y_{\underline j,1},\ldots,y_{\underline
j,c_{\underline j}})\in (Y^k)^{c_{\underline j}}$, where $y_{\underline
j,i}=(y_{j_1,i},\ldots,y_{j_k,i})\in Y^k$ for all $1\leq i\leq
c_{\underline j}$. 
One has
$$\xi_{\underline c}(y)=\prod_{\underline j}\xi_{\underline
j}^{c_{\underline j}}(y_{\underline j}).$$  
Let $\underline u$ be such that
$c_{\underline u}\neq 0$. So $\underline u\neq \underline p^*(k)$, and
there is $u_r\neq 0$ with $r\neq (p-1)/2$. Let $h$ be the element of
$M$ that is trivial on any {component} of $Y^{kw}$ except $h_{u_r,1}=b$.
Set $h'={}^{w_{\underline c}}h$, which is the element of $M$ all of whose
components are trivial except $h'_{u_r^*,1}=b$. Since $h\notin
N^+$, by~(\ref{eq:sgnwc}) we take $t_{\underline
c}=hw_{\underline c}$. Remark that
$w_{\underline c}$ normalizes $H$ and $M^+$, $\langle w_{\underline
c}\rangle\cap H=\langle w_{\underline
c}\rangle\cap M^+=1$, and $h\in M^+$ normalizes $H$.

For any $1\leq j\leq p$, denote by $\mathcal
X_j$ a representation of $Y$ with character $\xi_j$. Then
$$R_{\underline c}=\prod_{\underline j} (\mathcal
X_{j_1}\otimes\cdots\otimes \mathcal X_{j_k})^{c_{\underline j}}$$
is a representation of $M$ with character $\xi_{\underline c}$. 
For any positive integer
$l$,
$$t_{\underline c}^{2l}=h^lh'^l\quad\text{and}\quad t_{\underline
c}^{2l+1}=h^{l+1}h'^lw_{\underline c}.$$
Then $t_{\underline c}$ has order $2(p-1)$ and if $g\in\langle
t_{\underline c}\rangle$, then $g_h$ (see the notation of
Lemma~\ref{lemma:extensionsemidirect} with $t=h$) has possibly
{non zero} values
only on the components of $Y^{kw}$ labeled by
$(u_r,1)$ and $(u^*_r,1)$.

However, for any $x\in Y$, we have ${}^x\mathcal X_{u_r,
1}=\mathcal X_{u_r,1}$ and  ${}^x\mathcal X_{u^*_r,
1}=\mathcal X_{u^*_r,1}$ because these two representations have
dimension $1$. Hence, if we denote by $\rho_{\underline
c}$ the restriction of $R_{\underline c}$ to $H$, then
${}^{g_h}\rho_{\underline c}=\rho_{\underline c}$ for all $g\in\langle
t_{\underline c}\rangle$.  
Thus, by Lemma~\ref{lemma:extensionsemidirect}, we can extend $\rho_{\underline
c}$ to $Q=H\rtimes\langle t_{\underline c}\rangle$, and the character
$\widetilde{\eta}_{\underline c}$ of this extension takes the same
values as $E(\eta_{\underline c})$. Moreover, by Gallagher's theorem,
every extension of $\eta_{\underline c}$ to $Q$ is of the form
$\widetilde{\eta}_{\underline c}\otimes \beta$, where $\beta$ is an
irreducible character of $\langle t_{\underline c}\rangle$. The
irreducible characters of $\Irr(\langle t_{\underline c}\rangle)$ are
$\beta_j:\langle t_{\underline c}\rangle\rightarrow \C^*$ for $0\leq
j\leq 2p-3$ defined by $\beta_j(t_{\underline
c}^l)=\omega_{2(p-1)}^{jl}$.
Since
$\Res_Q^{T_{\underline c}}(\widetilde{\theta}_{\underline c})$ is such
an extension, there is $0\leq s\leq 2p-3$ such
that
\begin{equation}
\label{eq:resQ}
\Res_Q^{T_{\underline c}}(\widetilde{\theta}_{\underline
c})=\widetilde{\rho}_{\underline c}\otimes \beta_s.
\end{equation}
We notice that $\widetilde{\rho}_{\underline c}(t_{\underline
c}^l)$ is equal to $E(\eta_{\underline c})(1)$ if $l$ is even, and to
$E(\eta_{\underline c})(t_{\underline c})$ if $l$ is odd. In either
case,~(\ref{eq:jamesextension}) implies that theses values are positive
integers. 

Recall that $t_{\underline c}^2=hh'$ is the element whose every
component is trivial except {those} labeled $(u_r,1)$ and $(u_r^*,1)$ taking the
value $b$.
By~(\ref{eq:jamesextension}), we have
\begin{equation}
\label{eq:valcarre}
\widetilde{\theta}_{\underline c}(t_{\underline
c}^2)={\theta}_{\underline c}(t_{\underline
c}^2)=-\omega_{p-1}^r(b)^2\theta_{\underline
c}(1)=-\omega_{p-1}^{2r}\eta_{\underline c}(1).
\end{equation}
Using~(\ref{eq:resQ}), we also have
$$\widetilde{\theta}_{\underline c}(t_{\underline
c}^2)=\Res_Q^{T_{\underline c}}(\widetilde\theta_{\underline
c})(t_{\underline c}^2)=\widetilde{\rho}_{\underline c}(t_{\underline
c}^2)\beta_s(t_{\underline c}^2)=\omega_{2(p-1)}^{2s}\eta_{\underline
c}(1)=\omega_{p-1}^{s}\eta_{\underline
c}(1).$$
Comparing with~(\ref{eq:valcarre}), we obtain
{$$\omega_{p-1}^{s-2r}=-1.$$} However, $-1\in \langle
\omega_{p-1}^2\rangle=\langle \omega_{(p-1)/2}\rangle$ if and only if
$(p-1)/2$ is even. Hence, if $p\equiv 1\mod 4$, $s$ has to be even, and if
$p\equiv 3\mod 4$, $s$ has to be odd.

On the other hand,~(\ref{eq:resQ}) gives
\begin{equation}
\label{eq:valext}
\widetilde{\theta}_{\underline c}(t_{\underline c})=E(\eta_{\underline
c})(t_{\underline c})\beta_s(t_{\underline c})=E(\eta_{\underline
c})(t_{\underline c})\omega_{2(p-1)}^s.
\end{equation}
Since $E(\eta_{\underline
c})(t_{\underline c})$ is fixed by any $f\in\mathcal H_{n!/2}$ because
$\eta_{\underline c}$ is, and $\omega_{2(p-1)}^s$ is
fixed by any $f\in \mathcal K_{n!/2}$, we deduce
from~(\ref{eq:criterebouge}) that the characters $\psi_{\lambda}^{\pm}$ are fixed by
$f\in\mathcal K_{n!/2}$ for all $\underline\lambda\in\mathcal
P_{\underline c}$.

Finally, we remark that $\omega_{2(p-1)}^{p-1}=\omega_2=-1$. Thus,
$\omega_{2(p-1)}^p=-\omega_{2(p-1)}$, and
$\omega_{2(p-1)}^{2p}=\omega_{2(p-1)}^2$. Then by~(\ref{eq:valext}), 
$\sigma_{n!/2}$ fixes $\widetilde{\theta_{\underline c}}(t_{\underline
c})$ if $s$ is even, that is when $p\equiv 1\mod 4$ and $$\sigma_{n!/2}(\widetilde{\theta_{\underline c}}(t_{\underline
c}))=-\widetilde{\theta_{\underline c}}(t_{\underline
c})$$
if $s$ is odd, that is $p\equiv 3\mod 4$. The result follows from {the
criterion}~(\ref{eq:criterebouge}).
\end{proof}

Since $\sqrt p$ is a root of the polynomial $x^2-p\in\Q[x]$, we have $f(\sqrt p)=\pm\sqrt p$ for $f\in\mathcal K_{n!}$. Denote by
$\epsilon_f\in\{-1,1\}$ the sign such that $f(\sqrt p)=\epsilon_f \sqrt
p$.
\begin{proposition}
\label{prop:signesingular}
Let $\underline\lambda\in\mathcal{SP}(p^k,w)$ be such that
$\lambda_{\underline j}=\emptyset$ for all $\underline j\neq \underline
p^*(k)$. If $f\in\mathcal K_{n!}$, then
$$\varepsilon(\psi_{\underline\lambda},f)=\epsilon_f^{kd}\cdot
\varepsilon(\chi_{\lambda_{\underline p^*(k)}},f),$$
where $\varepsilon(\chi_{\lambda_{\underline p^*(k)}},f)$ is defined
in~(\ref{eq:signebouge}), and $d$ is the number of diagonal hooks {in
the Young diagram }of $\lambda_{\underline p^*(k)}$.
Moreover,
$$\varepsilon(\psi_{\underline\lambda},\sigma_{n!/2})=(-1)^{dk(p-1)/2}\cdot
\varepsilon(\chi_{\lambda_{\underline p^*(k)}},\sigma_{n!/2}).$$
\end{proposition}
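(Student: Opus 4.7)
The plan is to mirror the approach of Proposition~\ref{prop:signeregular}, adapted to the ``singular'' case where the diagonal component $\lambda_{\underline p^*}$ is the only non-empty part of $\underline\lambda$. Since $\lambda_{\underline j}=\emptyset$ for $\underline j\neq\underline p^*$, the inertial subgroup $N_{k,\underline\lambda}$ coincides with $N=Y^k\wr\sym_w$, and
\[
\psi_{\underline\lambda}=E(\xi_{\underline p^*}^w)\cdot\chi_{\lambda_{\underline p^*}}.
\]
Setting $\xi=\xi_{\underline p^*}$ and $\lambda=\lambda_{\underline p^*}$, both $\xi^w$ and $\chi_\lambda$ are $\sgn$-stable (the former by construction of $\xi$, the latter because $\lambda$ is symmetric), so $\psi_{\underline\lambda}$ is $\sgn$-stable. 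I would then run Clifford theory with respect to $M=(Y^k)^w\unlhd N$: the $N$-invariant character $\xi^w$ restricts on $M^+$ as $\vartheta^{+}+\vartheta^{-}$, and both constituents are $N^+$-invariant since the $\sym_w$-action on $M^+$ preserves the $\sgn$-splitting. By Gallagher, each $\vartheta^{\pm}$ extends to $\widetilde\vartheta^{\pm}\in\Irr(N^+)$, and the irreducible constituents of $\psi_{\underline\lambda}|_{N^+}$ are identified as $\widetilde\vartheta^{\pm}\cdot\chi_\lambda$; a canonical choice of $\widetilde\vartheta^{\pm}$ is provided by the decomposition $E(\xi^w)|_{N^+}=\widetilde\vartheta^{+}+\widetilde\vartheta^{-}$, which is forced by the $\sgn$-stability pattern visible in~(\ref{eq:jamesextension}).

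The central step, and the main obstacle, is the evaluation of $\psi_{\underline\lambda}^{+}-\psi_{\underline\lambda}^{-}$ on a distinguished split class representative. I would take $g=(y;\sigma)\in N^+$ with $\sigma\in\Alt_w$ of cycle type $(h_1,\ldots,h_d)$ matching the diagonal hooks of $\lambda$ (distinct odd integers summing to $w$) and each cycle of $\sigma$ having cycle product equal to $\underline a\in(Y^k)^+$. A centralizer product computation in the spirit of Remark~\ref{rk:splitclassproduit} (using that each $h_i$ is odd and $\underline a$ lies in the unique split class of $Y^k$) shows that $g$ lies in a split class of $N$. Using the factorization of $E(\xi^w)$ across the cycles of $\sigma$ provided by~(\ref{eq:jamesextension}), together with~(\ref{eq:diffcarYk}) and the classical Frobenius formula for the $\chi_\lambda^{\pm}$-difference on the diagonal hook cycle type, one obtains a closed-form expression for $(\psi_{\underline\lambda}^{+}-\psi_{\underline\lambda}^{-})(g)$ involving the factor $(i^{k(p-1)/2}\sqrt{p^k})^d$ from the $\xi^{\pm}$ split combined with the Frobenius quantity $(\chi_\lambda^{+}-\chi_\lambda^{-})(\sigma)$. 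Establishing the correct cycle-by-cycle factorization and its coupling with the $\sym_w$-character split is the technical heart of the argument, requiring an extension of the direct-product reduction (Proposition~\ref{prop:reducbouge}) to the wreath-product situation.

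Once this explicit value is available, the Galois computations are direct. For $f\in\mathcal K_{n!}$, the automorphism $f$ fixes every $p'$-root of unity (in particular $i^{k(p-1)/2}$) and scales $\sqrt p$ by $\epsilon_f$, so it acts on $(i^{k(p-1)/2}\sqrt{p^k})^d$ by the factor $\epsilon_f^{kd}$; combined with $f((\chi_\lambda^{+}-\chi_\lambda^{-})(\sigma))=\varepsilon(\chi_\lambda,f)(\chi_\lambda^{+}-\chi_\lambda^{-})(\sigma)$ from~(\ref{eq:signebouge}), this yields $\varepsilon(\psi_{\underline\lambda},f)=\epsilon_f^{kd}\varepsilon(\chi_\lambda,f)$. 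For $\sigma_{n!/2}$, using that $\sigma_{n!/2}$ fixes the $p$-power root $\omega_p$ while sending $i$ to $i^p$, the Gauss-sum identity $\sum_{j=1}^{p-1}\bigl(\tfrac{j}{p}\bigr)\omega_p^j=i^{(p-1)/2}\sqrt p\in\Q(\omega_p)$ forces $\sigma_{n!/2}(\sqrt p)=(-1)^{(p-1)/2}\sqrt p$, from which the claimed sign $(-1)^{dk(p-1)/2}$ emerges upon raising to the $kd$-th power and combining with the action on the $i^{k(p-1)/2}$ factor.
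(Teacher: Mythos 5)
There is a genuine gap, and it starts with the Clifford-theoretic skeleton. Your claim that the two constituents $\vartheta^{\pm}$ of $\Res_{M^+}^M(\xi_{\underline p^*}^w)$ are $N^+$-invariant is false: an element $s=(h;\tau)\in N^+$ with $h\in M\setminus M^+$ and $\tau\in\sym_w\setminus\Alt_w$ swaps $\vartheta^+$ and $\vartheta^-$ (this is precisely the first thing the paper's proof establishes), so their inertia group in $N^+$ is the index-two subgroup $U=M^+\rtimes\Alt_w$ and they do not extend to $N^+$. Consequently the asserted identification of the constituents of $\Res_{N^+}(\psi_{\underline\lambda})$ as $\widetilde\vartheta^{\pm}\cdot\chi_\lambda$ cannot hold; the actual structure is that $\Res_U(\psi_{\underline\lambda}^{\pm})$ mixes the $\vartheta$-splitting with the $\chi_\lambda^{\pm}$-splitting of $\Alt_w$, as in~(\ref{eq:respsilambda}), and $\psi_{\underline\lambda}^{\pm}$ is induced from $U$, not a product over $N^+$. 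Your proposed ``canonical choice'' $E(\xi^w)|_{N^+}=\widetilde\vartheta^{+}+\widetilde\vartheta^{-}$ is also false: by Lemma~\ref{lemma:starlocal}, $E(\xi_{\underline p^*}^w)^*$ is $E(\xi_{\underline p^*}^w)$ twisted by the lift of the sign character of $\sym_w$, so $E(\xi_{\underline p^*}^w)$ is not $\sgn$-stable and restricts irreducibly to $N^+$ (and Gallagher's theorem does not produce extensions in any case). The paper circumvents all of this by working with $U$, with $V=H\rtimes\Alt_w$ where $H=((Y^k)^+)^w$, and with the extensions $\mathcal V^{\pm}=\Ind_V^U(E(\nu^{\pm}))$; the computation of $(\mathcal V^+-\mathcal V^-)$ on the distinguished element via the transversal $[U/V]$, giving~(\ref{eq:valextV}), together with~(\ref{eq:respsilambda}), is exactly the ``cycle-by-cycle factorization and its coupling with the $\sym_w$-character split'' that you explicitly defer. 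So the technical heart of the proposition is acknowledged but not supplied.

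The Galois endgame is also not a derivation as written. For $f\in\mathcal K_{n!}$ your conclusion matches the paper's. But for $\sigma_{n!/2}$, your stated facts ($\sigma_{n!/2}(i)=i^p$ and $\sigma_{n!/2}(\sqrt p)=(-1)^{(p-1)/2}\sqrt p$, the latter being what Proposition~\ref{prop:racbouge} gives with $r=1$) make the action on the quantity $\bigl(i^{(p-1)/2}\sqrt p\bigr)^{kd}$ trivial: this value is a power of the Gauss sum $\sum_j\bigl(\tfrac{j}{p}\bigr)\omega_p^j\in\Q(\omega_p)$, which $\sigma_{n!/2}$ fixes, so the sign contributions from the $i$-factor and from $\sqrt p^{kd}$ cancel rather than producing $(-1)^{dk(p-1)/2}$. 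The paper's proof obtains the stated factor by asserting at this point that $\sigma_{n!/2}$ fixes $\sqrt p$, so that only the $i$-factor contributes; with your (different) value of $\sigma_{n!/2}(\sqrt p)$ the claimed sign does not ``emerge'' from the computation you describe. In short: besides the incorrect setup and the missing central computation, your final paragraph asserts the second displayed formula rather than deriving it, and as described it is arithmetically inconsistent with it.
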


\begin{proof}
As in the proof of the Proposition~\ref{prop:signesingular}, we consider
the group  $H=((Y^k)^+)^w$.
Write $\xi=\xi_{\underline
p^*(k)}^w\in\Irr(M)$. This is the unique split character of $M$
by~(\ref{eq:fixdirect}) and \S\ref{subsec:irrYk}. Denote by $\xi^+$ the
constituent of $\Res_{M^+}^M(\xi)$ such that $(\xi_{\underline
p^*(k)}^+)^w\in\Irr(H)$ is a constituent of $\Res^{M^+}_H(\xi^+)$.
First, we remark that the subgroup $U=M^+\rtimes\Alt_w$ is a normal
subgroup of $N^+$ because it has index $2$. Moreover, the inertial
subgroup in $U$ of $\xi^+$ and $\xi^-$ is $U$. Let $s\in N^+\backslash
U$. Then $s=(h;\tau)$ with $h\in M\backslash M^+$ and $\tau\in
\sym_w\backslash \Alt_w$, and ${}^s\xi^+=\xi^{-}$. It follows that 
$${}^s\Ind_{M^+}^U(\xi^+)=\Ind_{M^+}^U(\xi^-),$$
because $M^+\unlhd N^+$ and $U\unlhd N^+$. Furthermore,
$\Ind_{M^+}^U(\xi^+)$ and $\Ind_{M^+}^U(\xi^-)$ have no constituents in
common by Clifford theory with respect to $M^+\unlhd U$. It follows that
if $\chi$ is a constituent of $\Ind_{M^+}^U(\xi^+)$, then
${}^s\chi\neq\chi$. Hence, $\Ind_U^{N^+}(\chi)$ is irreducible by
Clifford theory with respect to $U\unlhd N^+$. By the transitivity
of induction and Mackey's formula, 
$$\Res_{N^+}^N\Ind_{M}^{N}(\xi)=\Res_{N^+}^N\Ind_{M}^{N}\Ind_{M^+}^M(\xi^+)=\Res_{N^+}^N\Ind_{M^+}^{N}(\xi^+)=\Ind_{M^+}^{N^+}(\xi^+).$$
Hence, $\psi_{\underline\lambda}^+$ and
$\psi_{\underline\lambda}^-$ restrict to $U$ into two irreducible components. We
write $\psi_{\underline\lambda,\pm}^{\pm}$ for the constituent of
$\Res^{N^+}_U(\psi_{\underline \lambda}^{\pm})$ which belongs to
$\Ind_{M^+}^U(\xi^{\pm})$.

Now we show how to extend $\xi^+$ and $\xi^-$ to $U$. Consider
the wreath product $V=H\rtimes \Alt_w$. Denote by $\nu^+=(\xi_{\underline
p^*(k)}^+)^w\in\Irr(H)$ and $\nu^-={}^s\nu^+$. By Clifford theory with respect to $H\unlhd M^+$
and the previous choice of labeling, we have
$$\Ind_H^{M^+}(\nu^+)=\xi^+\quad\text{and}\quad\Ind_H^{M^+}(\nu^-)=\xi^-.$$
Write $E(\nu^+)$ for the James-Kerber extension of $\nu^+$ to $V$.
Therefore, Mackey's formula gives
$$\Res_{M^+}^U\Ind_{V}^U(E(\nu^+))=\Ind_H^{M^+}(\nu^+)=\xi^+.$$
Thus, $\mathcal V^+=\Ind_V^U(E(\nu^+))$ is an extension of $\xi^+$ to $U$.
By Gallagher's theorem~\cite[Corollary 6.17]{isaacs}, 
the constituents of $\Ind_{M^+}^U(\xi^+)$ are of
the form $\zeta_{\mu,+}=\mathcal V^+\otimes \chi_{\mu}$ if $\mu\neq\mu^*$
and $\zeta_{\mu,+}^{\pm}=\mathcal V^+\otimes \chi_{\mu}^{\pm}$ if
$\mu=\mu^*$. Here, $\chi_{\mu}$ and $\chi_{\mu}^{\pm}$ are the
irreducible characters of $\Alt_w$.  If we set $\mathcal V^-={}^s\mathcal V^+$, then $\mathcal
V^+\neq\mathcal V^-$ because it is a constituent of
$\Ind_{M^+}^U(\xi^-)$. Thus,
$${}^s\psi_{\underline\lambda,\pm}^{\pm}={}^s(\mathcal
V^{\pm}\otimes\chi_{\lambda_{\underline p^*(k)}}^{\pm})={}^h(\mathcal
V^{\pm})\otimes {}^\tau(\chi_{\lambda_{\underline p^*(k)}}^{\pm})
=\mathcal
V^{\mp}\otimes\chi_{\lambda_{\underline
p^*(k)}}^{\mp}=\psi_{\underline\lambda,\mp}^{\mp},$$
and
\begin{equation}
\label{eq:respsilambda}
\Res^{N^+}_U(\psi_{\underline\lambda}^+)=\psi_{\underline\lambda,+}^++\psi_{\underline\lambda,-}^-\quad\text{and}\quad
\Res^{N^+}_U(\psi_{\underline\lambda}^-)=\psi_{\underline\lambda,+}^-+\psi_{\underline\lambda,-}^+.
\end{equation}
Consider the element $g=(u,\pi)$ such that the cycle lengths of $\pi$
are the diagonal hook lengths of $\lambda_{\underline p^*(k)}$, and $u$
{is}
such that every cycle of $g$ has cyclic product equal to $\underline a$.
Then $g\in U$ and
$$\mathcal V^+(g)=\Ind_V^U(E(\nu^+))(g)=
\sum\limits_{\renewcommand{\arraystretch}{0.5}%
     \begin{array}{c}
        \scriptstyle t\in [U/V] \\
        \scriptstyle {}^tg\in V
     \end{array}}
E(\nu^+)({}^tg)=
\sum\limits_{\renewcommand{\arraystretch}{0.5}%
     \begin{array}{c}
        \scriptstyle t\in [U/V] \\
        \scriptstyle {}^tg\in V
     \end{array}}
\prod_{\gamma \in C(\pi)}\nu^+(\mathfrak c({}^tg,\gamma)).$$
However, $U/V\simeq M^+/H$. Hence, we can take for transversal of $U$
mod $V$
the set
$$[U/V]=\{t_{\underline\alpha}=(b^{\alpha_1},\ldots,b^{\alpha_w})\,|\,\underline\alpha\in\{0,1\}^w,\,\alpha_1+\cdots+\alpha_w\equiv
0\pmod 2\}.$$  
Moreover, $^{t_{\alpha}}g\in U$ if and only if
$b^{\alpha_j}u_jb^{-\alpha_{\pi^{-1}(j)}}\in (Y^k)^+$ for all $1\leq
j\leq w$, if and only if
$b^{\alpha_j}b^{-\alpha_{\pi^{-1}(j)}}\in (Y^k)^+$ (because ${}^{b^{\alpha_j}}u_j\in
(Y^k)^+$) if and only if $b^{\alpha_j-\alpha_{\pi^{-1}(j)}}\in
(Y^k)^+$, {\emph{i.e.}} $\alpha_j=\alpha_{\pi^{-1}(j)}$, that is
{all} $\alpha_j$ are
equal on the cycles of $\pi$. Denote by $T$ the set of elements of
$[U/V]$ that satisfy this property. By~\cite[4.2.6]{James-Kerber}, for
any $\gamma\in C(\pi)$ and $t_{\underline\alpha}\in T$, 
$$\mathfrak
c({}^{t_{\underline\alpha}}g,\gamma)={}^{b^{\alpha_{\gamma}}}\mathfrak
c(g,\gamma)={}^{b^{\alpha_{\gamma}}}\underline a.$$
Thus
{$$\mathcal V^+(g)=\sum_{t_{\underline\alpha\in T}}\prod_{\gamma\in
C(\pi)}\nu^+({}^{b^{\alpha_{\gamma}}}\underline a).$$}
Let $\gamma_0$ be the cycle of $C(\pi)$ whose support contains $1$, and
define $y\in M$ such that $y_i=b$ if $i\in\operatorname{supp}(\gamma_0)$ 
and $1$ otherwise. Since $|\gamma_0|$ is odd, $y\in M\backslash M^+$.
Using that $\mathcal V^-(g)=\mathcal V^+({}^yg)$,
the same computation as above shows that
{$$\mathcal V^-(g)=\sum_{t_{\underline\alpha}\in \overline T}\prod_{\gamma\in
C(\pi)}\nu^+({}^{b^{\alpha_{\gamma}}}\underline a),$$}
where $\overline T$ is the set of $t_{\underline\alpha}$ such that the $\alpha_i$
are constant on the cycle of $\pi$ and $\alpha_1+\cdots+\alpha_w\equiv
1\pmod 2$. 
Since the lengths of the cycles of $\pi$ are odd, we have
$$\sum_{j=1}^w\alpha_j\equiv\sum_{\gamma\in
C(\pi)}\alpha_{\gamma}\pmod 2$$
for every $t_{\underline\alpha}\in T\cup \overline T$. Therefore, by a 
computation similar to that proving~(\ref{eq:diffcarYk}), we obtain
\begin{equation}
\label{eq:valextV}
(\mathcal V^+-\mathcal V^-)(g)=i^{dk(p-1)/2}\sqrt{p^{dk}}, 
\end{equation}
where $d=|C(\pi)|$. By~{\cite[2.5.13]{James-Kerber}}, we also have
\begin{equation}
\label{eq:splitpi}
(\chi_{\lambda_{\underline p^*(k)}}^+-\chi_{\lambda_{\underline
p^*(k)}}^-)(\pi)\neq 0.
\end{equation}

Let $f\in\mathcal G_{n!/2}$. By~(\ref{eq:respsilambda}), if
${}^f\psi_{\underline\lambda,\pm}^{\pm}=\psi_{\underline\lambda,\pm}^{\pm}$
or 
${}^f\psi_{\underline\lambda,\pm}^{\pm}=\psi_{\underline\lambda,\mp}^{\mp}$,
then $\varepsilon(\psi_{\underline\lambda,f})=1$, and if 
${}^f\psi_{\underline\lambda,\pm}^{\pm}=\psi_{\underline\lambda,\pm}^{\mp}$,
then $\varepsilon(\psi_{\underline\lambda,f})=-1$. However,
\begin{equation}
\label{eq:valpsig}
\psi_{\underline\lambda,\pm}^{\pm}(g)=\mathcal
V^{\pm}(g)\chi_{\lambda_{\underline p^*(k)}}^{\pm}(\pi)
\end{equation}
is non-zero, and $f(\psi_{\underline\lambda,\pm}^{\pm}(g))=f(\mathcal
V^{\pm}(g))\cdot f(\chi_{\lambda_{\underline p^*(k)}}^{\pm})$. Thus, by
equalities~(\ref{eq:valextV}),(\ref{eq:splitpi}) and~(\ref{eq:valpsig}), 
we have
${}^f\psi_{\underline\lambda,\pm}^{\pm}=\psi_{\underline\lambda,\pm}^{\mp}$
if and only if $f(\mathcal V^{\pm}(g))=\mathcal V^{\pm}(g)$ and
$f(\chi_{\lambda_{\underline p^*(k)}}^{\pm}(\pi))=
\chi_{\lambda_{\underline p^*(k)}}^{\mp}(\pi)$ or
$f(\mathcal V^{\pm}(g))=\mathcal V^{\mp}(g)$ and
$f(\chi_{\lambda_{\underline p^*(k)}}^{\pm}(\pi))=
\chi_{\lambda_{\underline p^*(k)}}^{\pm}(\pi)$.

Now, if $f\in\mathcal K_{n!/2}$, then $f(i)=i$. Note also that
$\sigma_{n!/2}(i)=(-1)^{(p-1)/2}i$ and that $\sigma_{n!/2}$ fixes $\sqrt
p$. The result then follows
from~(\ref{eq:valextV}).
\end{proof}

Let $\underline\lambda\in\mathcal{SP}(p^k,w)$. Let
$w'=|\lambda_{\underline p^*(k)}|$ and $w''=w-w'$. Define
$\underline\lambda''\in\mathcal{SP}(p^k,w')$ such that each part is
empty except $\lambda''_{\underline p^*(k)}=\lambda_{\underline p^*(k)}$, and
$\underline\lambda'\in\mathcal{SP}(p^k,w'')$  such that
$\lambda'_{\underline j}=\lambda_{\underline j}$ when $\underline
p\neq \underline p^*(k)$ and $\lambda_{\underline p^*(k)}=\emptyset$. Denote
by $\psi_{\underline\lambda'}$ and $\psi_{\underline\lambda''}$ the
corresponding irreducible characters of $N_{k,w'}$ and $N_{k,w''}$,
respectively.

\begin{theorem}
\label{thm:irrationaliteNk}
Let $\underline\lambda\in\mathcal{SP}(p^k,w)$. Then for any $f\in\mathcal
G_{n!/2}$,
$$\varepsilon(\psi_{\underline\lambda},f)=\varepsilon(\psi_{\underline\lambda'},f)\cdot\varepsilon(\psi_{\underline\lambda''},f).$$ 
\end{theorem}

\begin{proof}
Let $\underline\lambda\in\mathcal{SP}(p^k,w)$. Assume
$\underline\lambda'\neq\emptyset$ and $\underline\lambda''\neq\emptyset$.
Set $\underline c=(|\lambda_{\underline j}|,\ \underline j\in I^k)$,
$\underline c'=(0,\ldots,0,c_{\underline p^*(k)},0,\ldots,0)$ and
$\underline c''$ such that the coordinates of $\underline c$ and
$\underline c''$ are the same, except $c_{\underline p^*(k)}''=0$. Since
$\underline\lambda''\neq\emptyset$, one has $\xi_{\underline
c}^*\neq\xi_{\underline c}$, and the restriction
$\vartheta_{\underline c}$ of $\xi_{\underline c}$ to $M^+$ is
irreducible. By Mackey's formula, 
$$\Res_{N^+}^N\Ind_{M}^N(\xi_{\underline
c})=\Ind_{M^+}^{N^+}(\vartheta_{\underline c}).$$
Thus, $\psi_{\underline \lambda}^+$ and $\psi_{\underline\lambda}^-$
appear in the Clifford theory {attached}
to $\vartheta_{\underline c}$ with respect to $M^+\unlhd N^+$.
Moreover, by an argument similar to the one in the proof of
Proposition~\ref{prop:signeregular}, the inertial group of
$\vartheta_{\underline c}$ is an extension of degree $2$ of
$N_{\underline c}^+$.
Let $t_{\underline c''}$ be an element of $N_{\underline
c''}^+$ as in the proof of Proposition~\ref{prop:signeregular}, and 
$H_{\underline c''}=\langle N_{\underline c''},t_{\underline
c''}\rangle$. Consider
\begin{equation}
\label{eq:Hcdirectprod}
H_{\underline c}=N_{w'}\times H_{\underline c''}.
\end{equation}
Then the elements of $H_{\underline c}^+=
\langle (N_{w'}\times N_{\underline
c''})^+,t_{\underline c''}\rangle$ fix $\vartheta_{\underline c}$ and
this group is
an extension of degree $2$ of $(N_{w'}\times N_{\underline
c''})^+=N_{\underline c}$. Thus, the inertial subgroup of
$\vartheta_{\underline c}$ is $H_{\underline c}^+$.

On the other hand, $E(\xi_{\underline c}\chi_{\underline\lambda})$ is
not $H_{\underline c}$-stable. Hence,
$\widetilde{\psi}_{\underline\lambda}=\Ind_{N_{\underline c}}^{
H_{\underline c}}(E(\xi_{\underline c})\chi_{\underline\lambda})$ is
irreducible and by Mackey's formula
$$\Res_{H_{\underline c}^+}^{H_{\underline
c}}(\widetilde{\psi}_{\underline\lambda})=\Ind_{N_{\underline c}^+}^{H_{\underline
c}^+}(\theta_{\underline\lambda})=\theta_{\underline\lambda}^++\theta_{\underline\lambda}^-,$$
where $\theta_{\underline\lambda}$ is the restriction of
$E(\xi_{\underline c})\chi_{\underline\lambda}$ to $N_{\underline c}^+$.
Again, by Mackey's formula,
\begin{align}
\label{eq:argument}
\psi_{\underline\lambda}^++\psi_{\underline\lambda}^-&=\Res_{N^+}^N(\psi_{\underline\lambda})\\
&=\Res_{N^+}^N\Ind_{N_{\underline c}}^{N}(E(\xi_{\underline
c})\chi_{\underline\lambda})\nonumber\\
&=\Ind_{N_{\underline
c}^+}^{N^+}(\theta_{\underline\lambda})\nonumber\\
&= \Ind_{N_{\underline
c}^+}^{N^+}(\theta_{\underline\lambda}^+) + \Ind_{N_{\underline
c}^+}^{N^+}(\theta_{\underline\lambda}^-)\nonumber.
\end{align}
In particular, we can choose the label such that
$\psi_{\underline\lambda}^\eta=\Ind_{N_{\underline
c}^+}^{N^+}(\theta_{\underline\lambda}^\eta)$ for $\eta\in\{-1,1\}$.

Let $f\in\mathcal H_{n!/2}$. By Lemma~\ref{lemma:induction}, one has
\begin{equation}
\label{eq:locproof1}
\varepsilon(\psi_{\underline\lambda},f)=\varepsilon(\widetilde{\psi}_{\underline\lambda},f).
\end{equation}
Note that $E(\xi_{\underline
c})\chi_{\underline\lambda}=E(\xi_{\underline
c'})\chi_{\underline\lambda'}\otimes
E(\xi_{\underline
c''})\chi_{\underline\lambda''}$, hence
\begin{equation}
\label{eq:locproof3}
\widetilde\psi_{\underline\lambda}=\psi_{\underline\lambda'}\otimes
\widetilde{\psi}_{\underline\lambda''},
\end{equation}
where 
$\widetilde{\psi}_{\underline\lambda''}=\Ind_{N_{\underline
c''}}^{H_{\underline c''}}(E(\xi_{\underline
c''})\chi_{\underline\lambda''})\in\Irr(H_{\underline c''})$. We remark
that the computations~(\ref{eq:argument}) and~(\ref{eq:locproof1}) 
{applied} to $N_{k,w''}$ give
\begin{equation}
\label{eq:locproof4}
\varepsilon({\psi}_{\underline\lambda''},f)=\varepsilon(\widetilde{\psi}_{\underline\lambda''},f).
\end{equation}

Now,
$E(\xi_{\underline c})\chi_{\underline\lambda}$ is
$f$-stable, thus $\widetilde\psi_{\underline\lambda}$ also is by
Lemma~\ref{lemma:induction}. Applying Proposition~\ref{prop:reducbouge}
with respect to the direct product~(\ref{eq:Hcdirectprod}), and
using~(\ref{eq:locproof3}) and~(\ref{eq:locproof4}) we obtain
that
{
\begin{equation}
\label{eq:locproof2}
\varepsilon(\widetilde{\psi}_{\underline\lambda},f)=\varepsilon(\psi_{\underline\lambda'},f)\cdot\varepsilon(\widetilde{\psi}_{\underline\lambda''},f)=\varepsilon(\psi_{\lambda'},f)\cdot\varepsilon(\psi_{\lambda''},f). 
\end{equation}}
The result follows from~(\ref{eq:locproof1}) and~(\ref{eq:locproof2}).
\end{proof}
\section{Alternating groups: The global case}
\label{sec:global}
%
Let $\lambda=\lambda^*$.
Denote by ${\mathcal C}_{\lambda}$ the conjugacy classes of $\sym_n$ of
type ${\mathfrak{D}(\lambda)}$, that is, the lengths of the elements of
${\mathfrak{D}(\lambda)}$ are the cycle lengths of any element $x\in
{\mathcal C}_{\lambda}$. Recall that the classes
${\mathcal C}_{\lambda}$ of $\sym_n$ {split} into two classes ${\mathcal
C}_{\lambda}^+$ and ${\mathcal C}_{\lambda}^-$ of $\Alt_n$, and that the 
restriction to $\Alt_n$ of the
irreducible character $\chi_{\lambda}$ splits into two constituents
$\chi_{\lambda}^+$ and $\chi_{\lambda}^-$ that take the same (integer) value on
{every class} except on ${\mathcal C}_{\lambda}^{\pm}$, and
{by~\cite[2.5.13]{James-Kerber}} the labeling
can be chosen such that for all $\eta,\,\nu\in\{-1,1\}$
\begin{equation}
\label{eq:frobenius}
\chi_{\lambda}^{\eta}(x_{\lambda}^{\nu})=\frac{1}2\left((-1)^{(n-d_{\lambda})/2}+\eta\nu i^{(n-d_{\lambda})/2}\sqrt{\prod_{h\in
{\mathfrak{D}(\lambda)}}h}\right),
\end{equation}
where $x_{\lambda}^\nu$ is a representative of ${\mathcal
C}_{\lambda}^\nu$ and $d_{\lambda}=|{\mathfrak{D}(\lambda)}|$.

{For any field automorphism $f$, if $\alpha$ is a
root of $x^2-q\in \Q[x]$, then $f(\alpha)$ is also a root of $x^2-q$.} We
denote by $\varepsilon(\alpha,f)\in\{-1,1\}$ the sign such that
\begin{equation}
\label{eq:notationbouge}
f(\alpha)=\varepsilon(\alpha,f)\alpha.
\end{equation}
Note that when $\lambda=\lambda^*$ and $f\in\mathcal
H_{n!/2}$,
\begin{equation}
\label{eq:bougealt}
\varepsilon(\chi_{\lambda},f)=\varepsilon\left(i^{(n-d_{\lambda})/2}
\sqrt{\prod_{h\in
{\mathfrak{D}(\lambda)}}h},f\right). 
\end{equation}
\medskip
\subsection{Action of Galois automorphisms on square roots}

Let $m$ be an odd number. For any integer $r$, we write
$\left(\frac{r}m\right)$ for the Jacobi symbol.

\begin{proposition}
\label{prop:racbouge}
Let $m$ be an odd number, and $f$ be a Galois automorphism. Denote by
$r$ an integer prime to $m$ such that $f(\omega_m)=\omega_m^r$. 
Then
$$f(\sqrt m)=\varepsilon(i,f)^{\frac{m-1}{2}}\left(\frac{r}m\right)\sqrt m.$$
\end{proposition}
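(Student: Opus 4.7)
The plan is to identify $\sqrt{m}$ with a classical quadratic Gauss sum inside $\Q_m$ and then compute $f$ applied to that Gauss sum using multiplicativity of the Jacobi symbol.

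First, I would invoke Gauss's evaluation of the quadratic Gauss sum for odd $m$: if we set
\[
\tau_m=\sum_{k=1}^{m-1}\left(\frac{k}{m}\right)\omega_m^k,
\]
then $\tau_m=\varepsilon_m\sqrt{m}$, where $\varepsilon_m=1$ if $m\equiv 1\pmod 4$ and $\varepsilon_m=i$ if $m\equiv 3\pmod 4$. Equivalently, $\varepsilon_m^2=(-1)^{(m-1)/2}$, and the key observation is that
\[
\frac{\varepsilon_m}{f(\varepsilon_m)}=\varepsilon(i,f)^{(m-1)/2}
\]
for every Galois automorphism $f$: indeed, when $(m-1)/2$ is even, $\varepsilon_m\in\Q$ and both sides equal $1$; when $(m-1)/2$ is odd, $\varepsilon_m=i$ and both sides equal $\varepsilon(i,f)$ (using $\varepsilon(i,f)^2=1$).

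Next, I would compute $f(\tau_m)$ directly. Since $f(\omega_m)=\omega_m^r$ with $\gcd(r,m)=1$, the change of index $k'=kr$ in $\Z/m\Z$ yields
\[
f(\tau_m)=\sum_{k=1}^{m-1}\left(\frac{k}{m}\right)\omega_m^{kr}=\left(\frac{r^{-1}}{m}\right)\sum_{k'=1}^{m-1}\left(\frac{k'}{m}\right)\omega_m^{k'}=\left(\frac{r}{m}\right)\tau_m,
\]
using multiplicativity of the Jacobi symbol and the fact that $\left(\frac{r}{m}\right)\left(\frac{r^{-1}}{m}\right)=\left(\frac{1}{m}\right)=1$.

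Combining the two computations, $f(\varepsilon_m\sqrt{m})=\left(\frac{r}{m}\right)\varepsilon_m\sqrt{m}$, and dividing by $f(\varepsilon_m)$ gives the claimed formula
\[
f(\sqrt{m})=\frac{\varepsilon_m}{f(\varepsilon_m)}\left(\frac{r}{m}\right)\sqrt{m}=\varepsilon(i,f)^{(m-1)/2}\left(\frac{r}{m}\right)\sqrt{m}.
\]
The only nontrivial input is Gauss's evaluation of the sign of $\tau_m$ (the classical, and genuinely hard, piece); everything else is a formal computation. I would therefore quote the Gauss sum identity from a standard reference (e.g.\ \cite{KennethRosenNrTh}) rather than reprove it.
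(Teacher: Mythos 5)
There is a genuine gap, and it lies in the one step you declared to be the only nontrivial input. The identity $\sum_{k=1}^{m-1}\left(\frac{k}{m}\right)\omega_m^k=\varepsilon_m\sqrt m$ is valid only when $m$ is \emph{squarefree}: for general odd $m$ the Jacobi character $\left(\frac{\cdot}{m}\right)$ is imprimitive and the character sum need not have absolute value $\sqrt m$ at all. For instance, for $m=9$ one has $\left(\frac{k}{9}\right)=1$ if $3\nmid k$ and $0$ otherwise, so $\tau_9=\sum_{3\nmid k}\omega_9^k=0\neq \varepsilon_9\cdot 3$. (Gauss's sign theorem for composite odd modulus concerns the exponential sum $\sum_{k=0}^{m-1}e^{2\pi i k^2/m}$, which coincides with your character sum only in the squarefree case.) The proposition, however, is stated for an arbitrary odd $m$, and in the paper it is applied with $m=\prod_{i,u}u(w_{u,i}p-u)$, a product of hook-length factors that has no reason to be squarefree, so the squarefree case is not enough. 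Your remaining steps (the substitution $k'=kr$, multiplicativity of the Jacobi symbol, and the bookkeeping $\varepsilon_m/f(\varepsilon_m)=\varepsilon(i,f)^{(m-1)/2}$) are fine for every odd $m$; only the evaluation of $\tau_m$ breaks.

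The gap is easy to close, in either of two ways. (1) Reduce to the squarefree part: write $m=m_0m_1^2$ with $m_0$ squarefree. Then $\sqrt m=m_1\sqrt{m_0}$ and $f$ fixes $m_1$; since $m_1^2\equiv 1\pmod 8$ we get $m\equiv m_0\pmod 4$, so $(m-1)/2$ and $(m_0-1)/2$ have the same parity; moreover $\left(\frac{r}{m}\right)=\left(\frac{r}{m_0}\right)\left(\frac{r}{m_1}\right)^2=\left(\frac{r}{m_0}\right)$, and $f(\omega_{m_0})=\omega_{m_0}^{r}$ because $\omega_{m_0}=\omega_m^{m/m_0}$. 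Your argument then applies verbatim to $m_0$ and yields the stated formula for $m$. (2) Argue prime by prime, which is what the paper does: factor $m=\prod_j p_j^{a_j}$, use the prime Gauss sums $\sum_t\left(\frac{t}{p_j}\right)\omega_{p_j}^t$ (equal to $\sqrt{p_j}$ or $i\sqrt{p_j}$), deduce $f$ multiplies each by $\left(\frac{r}{p_j}\right)$, and recover $\left(\frac{r}{m}\right)=\prod_j\left(\frac{r}{p_j}\right)^{a_j}$ from the definition of the Jacobi symbol, keeping track of the powers of $i$ exactly as you did. With either repair your proof is correct; for squarefree $m$ your single global Gauss sum is a clean alternative to the paper's prime-by-prime computation, but as written it does not cover the generality the proposition (and its later use) requires.
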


\begin{proof}
Write $m=p_1^{a_1}\cdots p_s^{a_s}$ for the prime {factorisation}
of $m$. Define by $E$ and $F$ respectively the set of indices $1\leq
j\leq s$ such that $p_j\equiv 1$ or $3$ modulo $4$. 
 
Suppose $m\equiv 1\mod 4$. Then $\sum_{j\in F}a_j$ is even, and
\begin{equation}
\label{eq:racm}
\sqrt m=\prod_{j\in E}\sqrt{p_j}^{a_j}\cdot\left(\eta\prod_{j\in
F}(i\sqrt{p_j})^{a_j}\right),
\end{equation}
where $\eta=-1$ if $\sum_{j\in F}a_j\equiv 2\mod 4$ and $\eta=1$
otherwise.
Since $f$ is a field automorphism fixing $\eta$, we deduce
\begin{equation}
\label{eq:signeracm}
\varepsilon(\sqrt m,f)=\prod_{j\in E}\varepsilon(\sqrt
p_j,f)^{a_j}\cdot \prod_{j\in F}\varepsilon(i\sqrt p_j,f)^{a_j}.
\end{equation}
Now, if we set $q_j=\sqrt{p_j}$ if $j\in E$ and $q_j=i\sqrt{p_j}$ if $j\in
F$, {then~\cite[Thm. 1]{KennethRosenNrTh} gives}
$$\sum_{t=1}^{p_{j}-1}\left(\frac{t}p_j\right)\omega_{p_j}^t= q_j$$ 
Furthermore, one has $\omega_{p_j}=\omega_m^{m/p_j}$, so
$f(\omega_{p_j})=\omega_{p_j}^r$, and
$$f(q_j)=\sum_{t=1}^{p_{j}-1}\left(\frac{t}p_j\right)\omega_{p_j}^{rt}=
\left(\frac{r}p_j\right)q_j$$
by~\cite[Prop. 6.3.1]{KennethRosenNrTh}. Hence, $\varepsilon(q_j,f)=
\left(\frac{r}p_j\right)$ and the result follows
from~(\ref{eq:signeracm}) and the definition of the Jacobi symbol.

Suppose that $m\equiv 3\mod 4$. Then $\sum_{j\in F}a_j$ is odd, and
in the {formula}~(\ref{eq:racm}), $\eta$ is now equal to $i$ up to a sign.
When the formula~(\ref{eq:signeracm}) is multiplied by
$\varepsilon(i,f)$, the result follows.
\end{proof}
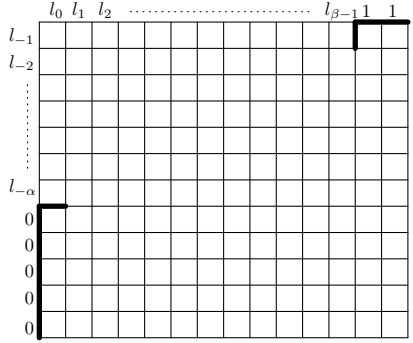
\begin{figure}
\label{fig:graphpart}
\begin{center}
\begin{tikzpicture}[line cap=round,line join=round,>=triangle
45,x=1.0cm,y=1.0cm,scale=0.7,every node/.style={scale=0.7}]
\clip(-1.6614876033057846,-5.464462809917355) rectangle (7.87570247933884,1.758677685950413);
\draw (0.5,1.)-- (0.5,-5.);
\draw (1.,1.)-- (1.,-5.);
\draw (1.5,1.)-- (1.5,-5.);
\draw (2.,1.)-- (2.,-5.);
\draw (2.5,1.)-- (2.5,-5.);
\draw (3.,-5.)-- (3.,1.);
\draw (3.5,1.)-- (3.5,-5.);
\draw (4.,-5.)-- (4.,1.);
\draw (4.5,1.)-- (4.5,-5.);
\draw (5.,-5.)-- (5.,1.);
\draw (5.5,1.)-- (5.5,-5.);
\draw (6.,-5.)-- (6.,1.);
\draw (6.5,1.)-- (6.5,-5.);
\draw (7.,-5.)-- (7.,1.);
\draw (0.,1.)-- (7.,1.);
\draw (0.,0.5)-- (7.,0.5);
\draw (7.,0.)-- (0.,0.);
\draw (0.,-0.5)-- (7.,-0.5);
\draw (7.,-1.)-- (0.,-1.);
\draw (0.,-1.5)-- (7.,-1.5);
\draw (0.,-2.)-- (7.,-2.);
\draw (0.,-2.5)-- (7.,-2.5);
\draw (0.,-3.)-- (7.,-3.);
\draw (0.,-4.)-- (7.,-4.);
\draw (7.,-3.5)-- (0.,-3.5);
\draw (0.,-4.5)-- (7.,-4.5);
\draw (7.,-5.)-- (0.,-5.);
\draw (0.,-5.)-- (0.,1.);
\draw (0.07,1.5) node[anchor=north west]
{$l_0$};
\draw (0.5,1.5) node[anchor=north west]
{$l_1$};
\draw (1,1.5) node[anchor=north west]
{$l_2$};
\draw [dotted] (1.7269421487603298,1.196694214876033)-- (5.214545454545453,1.196694214876033);
\draw (5.3,1.5) node[anchor=north west] {$l_{\beta-1}$};
\draw (6,1.45) node[anchor=north west] {$1$};
\draw (6.5,1.45) node[anchor=north west] {$1$};
\draw (-0.7,-1.861157024793388) node[anchor=north west] {$l_{-\alpha}$};
\draw (-0.4,-4.571900826446281) node[anchor=north west]
{$0$};
\draw (-0.4,-4) node[anchor=north west]
{$0$};
\draw (-0.4,-3.5) node[anchor=north west]
{$0$};
\draw (-0.4,-3.) node[anchor=north west]
{$0$};
\draw (-0.4,-2.5) node[anchor=north west]
{$0$};
\draw [line width=2.pt] (6.,1.)-- (6.,0.5);
\draw [line width=2.pt] (0.,-2.5)-- (0.5,-2.5);
\draw [dotted] (-0.22347107438016536,-0.19173553719008263)-- (-0.22347107438016536,-1.7785123966942145);
\draw (-0.7,1) node[anchor=north west]
{$l_{-1}$};
\draw (-0.7,0.5) node[anchor=north west]
{$l_{-2}$};
\draw [line width=2.pt] (6.,1.)-- (7.,1.);
\draw [line width=2.pt] (0.,-5.)-- (0.,-2.5);
\end{tikzpicture}
\end{center}
\caption{Construction of the rim from the sequence}
\end{figure}
\subsection{Combinatorics {of symmetric partitions}}
\label{subsec:combinatorics}
Recall a partition $\lambda$ is completely determined by the {\it rim} of
its Young diagram $Y(\lambda)$, a path constituted  of vertical and
horizontal dashes of length one. Then $\lambda$ can, by the association of
0 (resp. 1) to a vertical (resp. horizontal) dash of length one, be
expressed by its \emph{partition sequence} $\Lambda$. This is an infinite
sequence taking its values in $\{0,1\}$ and of the form $\overline 0
\cdots \overline 1$, where $\overline 0$ and $\overline 1$ mean an
infinite sequence of left-trailing and right-trailing $0$s and of $1$s,
respectively. 
We refer the reader to Example~\ref{ex:ex1}.

Let $\Lambda$ be the partition sequence associated to $\lambda$.  
Denote by
$\alpha$ and $\beta$ the numbers of zeroes and ones 
between the leftmost $1$
and the rightmost $0$ coming after it 
when we read the sequence from the left-to-right.
Then there are $\alpha+\beta$ elements in the sequence between
$\overline 0$ and $\overline 1$. We write 
\begin{equation}
\label{eq:partseq}
\Lambda=\overline
0l_{-\alpha}l_{-\alpha+1}\cdots l_{-1}l_0\cdots l_{\beta-1}\overline
1=(l_u)_{u\in\Z}.
\end{equation}
In particular $l_{-\alpha}=1$ and $l_{\beta}=0$. If there is no $0$
after the first $1$, then $\alpha=\beta=0$ and the sequence is
$\overline 0\,\overline 1$ and corresponds to the empty partition.
The bijection between this labeling of partition sequences and
partitions can be represented graphically as in
Figure~1.

\begin{example}
Consider the partition $\lambda=(7^2,5,4,3,2^2)$. 

\begin{center}
\begin{tikzpicture}[line cap=round,line join=round,>=triangle
45,x=1.0cm,y=1.0cm,scale=0.8,every node/.style={scale=0.8}]
\clip(-1.6614876033057846,-4.) rectangle (5.87570247933884,1.758677685950413);
\draw (0.5,1.)-- (0.5,-3.5);
\draw (1.,1.)-- (1.,-3.5);
\draw (1.5,1.)-- (1.5,-3.5);
\draw (2.,1.)-- (2.,-3.5);
\draw (2.5,1.)-- (2.5,-3.5);
\draw (3.,-3.5)-- (3.,1.);
\draw (3.5,1.)-- (3.5,-3.5);
\draw (4.,-3.5)-- (4.,1.);
\draw (4.5,1.)-- (4.5,-3.5);
\draw (0.,1.)-- (4.5,1.);
\draw (0.,0.5)-- (4.5,0.5);
\draw (4.5,0.)-- (0.,0.);
\draw (0.,-0.5)-- (4.5,-0.5);
\draw (4.5,-1.)-- (0.,-1.);
\draw (0.,-1.5)-- (4.5,-1.5);
\draw (0.,-2.)-- (4.5,-2.);
\draw (0.,-2.5)-- (4.5,-2.5);
\draw (0.,-3.)-- (4.5,-3.);
\draw (4.5,-3.5)-- (0.,-3.5);
\draw (0.,-3.5)-- (0.,1.);

\draw (-0.4,-3.) node[anchor=north west]
{$0$};
\draw (-0.4,-2.5) node[anchor=north west]
{$0$};
\draw (-0.4,-2.) node[anchor=north west]
{$1$};
\draw (-0.4,-1.5) node[anchor=north west]
{$1$};
\draw (-0.4,-1.) node[anchor=north west]
{$0$};
\draw (-0.4,-0.5) node[anchor=north west]
{$0$};
\draw (-0.4,0) node[anchor=north west]
{$1$};
\draw (-0.4,0.5) node[anchor=north west]
{$0$};
\draw (-0.4,1) node[anchor=north west]
{$1$};
\draw (0,1.5) node[anchor=north west]
{$0$};
\draw (0.5,1.5) node[anchor=north west]
{$1$};
\draw (1,1.5) node[anchor=north west]
{$0$};
\draw (1.5,1.5) node[anchor=north west]
{$1$};
\draw (2,1.5) node[anchor=north west]
{$1$};
\draw (2.5,1.5) node[anchor=north west]
{$0$};
\draw (3,1.5) node[anchor=north west]
{$0$};
\draw (3.5,1.5) node[anchor=north west]
{$1$};
\draw (4,1.5) node[anchor=north west]
{$1$};

\draw [line width=2.pt] (0.,-2.5)-- (0.5,-2.5);
\draw [line width=2.pt] (3.5,1.)-- (3.5,0);
\draw [line width=2.pt] (3.5,0)-- (2.5,0);
\draw [line width=2.pt] (2.5,0)-- (2.5,-0.5);
\draw [line width=2.pt] (2.5,-0.5)-- (2,-0.5);
\draw [line width=2.pt] (2,-0.5)-- (2,-1);
\draw [line width=2.pt] (2,-1.)-- (1.5,-1);
\draw [line width=2.pt] (1.5,-1.)-- (1.5,-1.5);
\draw [line width=2.pt] (1.5,-1.5)-- (1,-1.5);
\draw [line width=2.pt] (1,-1.5)-- (1,-2.5);
\draw [line width=2.pt] (1.,-2.5)-- (0.,-2.5);
\end{tikzpicture}
\end{center}
The partition sequence of $\lambda$
is $\Lambda=\overline 011001010101100\overline 1$. We have
$\alpha=\beta=7$, and following the preceding convention, $l_0$ and
$l_{-1}$ are {the numbers} directly at the right and the left of the dash
$1100101|0101100$. Note that in the accompanying figure the partition sequence has been projected to the left-and-top border of the Young diagram.
\label{ex:ex1}
\end{example}

Furthermore, by~\cite[Lemma
2.2]{Olsson}, the partition sequence of $\lambda^*$, denoted by
$\Lambda^*$, is obtained from
$\Lambda$ by reading $\Lambda$ from the right to the left with 0s and
1s interchanged. In other words
\begin{equation}
\label{eq:sequencelambdaconj}
\Lambda^*= \overline
0(1-l_{\beta-1})(1-l_{\alpha-2})\cdots (1-l_{-\alpha})\overline
1=(1-l_{-u-1})_{u\in\Z}.
\end{equation}

We now describe ${\mathfrak D}(\lambda)$ the diagonal hooks of $\lambda$ using
$\Lambda$.
For $\delta\in\{0,1\}$, write
{$$H_{\delta}=\{0\leq j\leq \beta-1\mid l_j=\delta\}\quad\text{and}\quad 
K_{\delta}=\{-\alpha\leq j\leq -1\mid l_j=\delta\}.$$}
Note that if $h=|H_0|$, then $|H_1|=\beta-h$ and
$|K_1|=\beta-|H_1|=\beta-(\beta-h)=h$. Hence, $|H_0|=|K_1|$. 
On the other hand, by~\cite[p.\,9]{Olsson} each hook of $\lambda$
corresponds to a pair $(i,j)$ such that
$-\alpha\leq i<j\leq \beta-1$ with $l_i=1$ and $l_j=0$. Such a hook
$h_{(i,j)}$ has length $|j-i|$. In particular, {the longest hook of
$\lambda$ is $h_{-\alpha,\beta-1}$ and it has to be the first diagonal
hook of $\lambda$.} When we remove it from $\lambda$, we obtain a
new partition with the same sequence {as} $\lambda$ except that
$l_{-\alpha}=0$ and $l_{\beta-1}=1$. Since $|H_0|=|K_1|$, when we
iterate this process $|H_0|$ times, we obtain the empty
partition. In fact, we have removed from $\lambda$ all diagonal hooks
one by one. Thus, the diagonal hooks of $\lambda$ are labeled
by $H_0$ (and $K_1$).

\begin{example}
In Example~\ref{ex:ex1}, we see that there are four $0$s on the
horizontal and four 1s on the vertical {axe}, corresponding to the four
diagonal hooks of $\lambda$. 
\end{example}

\medskip

Let $p$ be an odd prime. We now consider a $p$-abacus with $p$
runners, labeled from $0$ to $p-1$ from left-to-right. 
We choose a position on the first runner and we label it by $0$. Then we
label positions by integers moving left-to-right to the
runner $p-1$, then wrapping around to runner $0$ one row above. In particular, the positions on the runner 0 are labeled by
$\cdots,-3p,\,-2p,\,-p,\,0,\,p,\,2p\cdots$. Now, we fill the abacus so
that there is a bead at the position labeled by $j$ if and only if
$l_j=0$. 
For example, Figure~2 
is the $p$-abacus of the empty partition.
\begin{figure}
\begin{center}
\label{fig:abacusempty}
\definecolor{sqsqsq}{rgb}{0.12549019607843137,0.12549019607843137,0.12549019607843137}
\begin{tikzpicture}[line cap=round,line join=round,>=triangle
45,x=1.0cm,y=1.0cm, scale=0.8,every node/.style={scale=0.8}]
\draw (1.140727272727281,3.8946) node[anchor=north west] {$\cdots$};
\draw (1.1589090909090993,1.8582363636363655) node[anchor=north west] {$\cdots$};
\draw (1.1952727272727357,-0.14176363636363368) node[anchor=north west] {$\cdots$};
\draw (-1.,4.)-- (-1.,-1.);
\draw (0.,4.)-- (0.,-1.);
\draw (3.,4.)-- (3.,-1.);
\draw [dash pattern=on 2pt off 2pt](-1.,4.)-- (-1.,5.);
\draw [dash pattern=on 2pt off 2pt](0.,4.)-- (0.,5.);
\draw [dash pattern=on 2pt off 2pt](3.,4.)-- (3.,5.);
\draw [dash pattern=on 2pt off 2pt](-1.,-1.)-- (-1.,-2.);
\draw [dash pattern=on 2pt off 2pt](0.,-1.)-- (0.,-2);
\draw [dash pattern=on 2pt off 2pt](3.,-1)-- (3.,-2);
\draw (-1.2,-2.4) node[anchor=north west] {$0$};
\draw (-0.2,-2.4) node[anchor=north west] {$1$};
\draw (2.531636363636372,-2.450854545454541) node[anchor=north west] {$p-1$};
\draw (-1.5,2.130963636363638) node[anchor=north west] {$0$};
\draw (-0.5,2.11278181818182) node[anchor=north west] {$1$};
\draw (3,2.0946) node[anchor=north west] {$p-1$};
\draw (3,1.1127818181818203) node[anchor=north west] {$-1$};
\draw (-1.5,3.1309636363636377) node[anchor=north west] {$p$};
\draw (-1.7,4.130963636363637) node[anchor=north west] {$2p$};
\draw (-1.768363636363628,1.1309636363636386) node[anchor=north west] {$-p$};
\begin{scriptsize}
\draw [color=black] (-1.,4.)-- ++(-2.5pt,0 pt) -- ++(5.0pt,0 pt) ++(-2.5pt,-2.5pt) -- ++(0 pt,5.0pt);
\draw [color=black] (-1.,3.)-- ++(-2.5pt,0 pt) -- ++(5.0pt,0 pt) ++(-2.5pt,-2.5pt) -- ++(0 pt,5.0pt);
\draw [color=black] (-1.,2.)-- ++(-2.5pt,0 pt) -- ++(5.0pt,0 pt) ++(-2.5pt,-2.5pt) -- ++(0 pt,5.0pt);
\draw [fill=sqsqsq] (-1.,1.) circle (2.5pt);
\draw [fill=black] (-1.,0.) circle (2.5pt);
\draw [fill=black] (-1.,-1.) circle (2.5pt);
\draw [color=black] (0.,4.)-- ++(-2.5pt,0 pt) -- ++(5.0pt,0 pt) ++(-2.5pt,-2.5pt) -- ++(0 pt,5.0pt);
\draw [color=black] (0.,3.)-- ++(-2.5pt,0 pt) -- ++(5.0pt,0 pt) ++(-2.5pt,-2.5pt) -- ++(0 pt,5.0pt);
\draw [color=black] (0.,2.)-- ++(-2.5pt,0 pt) -- ++(5.0pt,0 pt) ++(-2.5pt,-2.5pt) -- ++(0 pt,5.0pt);
\draw [fill=black] (0.,1.) circle (2.5pt);
\draw [fill=black] (0.,0.) circle (2.5pt);
\draw [fill=black] (0.,-1.) circle (2.5pt);
\draw [color=black] (3.,4.)-- ++(-2.5pt,0 pt) -- ++(5.0pt,0 pt) ++(-2.5pt,-2.5pt) -- ++(0 pt,5.0pt);
\draw [color=black] (3.,3.)-- ++(-2.5pt,0 pt) -- ++(5.0pt,0 pt) ++(-2.5pt,-2.5pt) -- ++(0 pt,5.0pt);
\draw [color=black] (3.,2.)-- ++(-2.5pt,0 pt) -- ++(5.0pt,0 pt) ++(-2.5pt,-2.5pt) -- ++(0 pt,5.0pt);
\draw [fill=black] (3.,1.) circle (2.5pt);
\draw [fill=black] (3.,0.) circle (2.5pt);
\draw [fill=black] (3.,0.) circle (2.5pt);
\draw [fill=black] (3.,-1.) circle (2.5pt);
\end{scriptsize}
\end{tikzpicture}
\caption{$p$-abacus of the empty partition}
\end{center}
\end{figure}
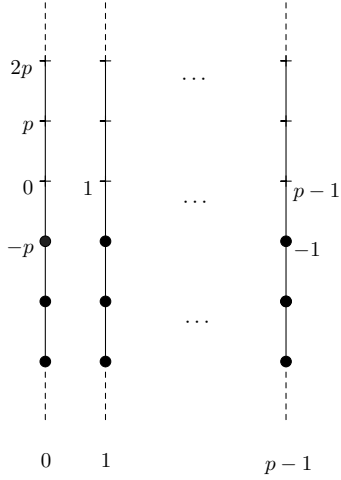

We can also read the diagonal hooks $\mathfrak{D}(\lambda)$ directly off of the $p$-abacus: they
are parametrized by the beads labeled by a non-negative integer.
More precisely,
if we set 
\begin{equation}
\label{eq:partitionsequencegamma}
l_{\gamma,j}=l_{jp+\gamma}
\end{equation}
for all $j\in \Z$, then the
beads on runner $\gamma$ can be interpreted as the partition
sequence $(l_{\gamma,j})_{j\in\Z}$ of a partition $\lambda_{\gamma}$.

\begin{remark}
In general, this labeling of the sequence is not compatible with that
of~(\ref{eq:partseq}). Indeed, there is no reason that there {should be} exactly
the same number of $1$s below  $l_{\gamma,0}$ as the number of $0$s
above it.
\end{remark}

{We define $\lambda_{\gamma}$ as the partition whose partition sequence
can be read {off} the beads on runner $\gamma$. That is, the abacus
position $\gamma +mp$ corresponds to a so-called {$\gamma$-position} $m$; that is, if
$\lambda$ has {a bead} in abacus position $\gamma+mp$ then $\lambda_{\gamma}$ {has a bead in position $m$ on runner $\gamma$}.
Then ${\mathcal Quo}_p(\lambda)$ is the {\it p-quotient} of $\lambda$, that is, the sequence
$(\lambda_0,\ldots,\lambda_{p-1})$. 
   
Now, for $0\leq\gamma\leq p-1$, define
{$$\mathcal X_{\gamma}=\{j\in\Z\mid pj+\gamma\geq 0\ \text{and }
l_{\gamma,j}=0\}.$$}
Therefore, each $j\in\mathcal X_\gamma$ {labels} a diagonal hook of
$\lambda$. Such hooks will be called \emph{diagonal hooks of $\lambda$
arising from {runner $\gamma$}}. Let ${\mathcal Cor}_p(\lambda)$ be the
{\it p-core} of $\lambda$, that is, the partition one obtains by removing
all the $p$-hooks of $\lambda$. {Such a partition is well-defined
\cite[p.\,79]{James-Kerber}.} Then, $\lambda$ is uniquely determined by
${\mathcal Cor}_p(\lambda)$ and ${\mathcal Quo}_p(\lambda)$.  

Let ${\mathcal Cor}^{(0)}_p(\lambda)={\mathcal Cor}_p(\lambda)$. Now
consider the $p$-tuple of $p$-abaci, one for each of the
$\lambda_{\gamma}\in {\mathcal Quo}_p(\lambda)$ above. Then ${\mathcal
Cor}^{(1)}_p(\lambda)$ will be a $p$-tuple defined to be the sequence
$({\mathcal Cor}_p(\lambda_{\gamma}))$ for $0\leq\gamma\leq p-1$. This
naturally induces a $p^2$-tuple $({\mathcal Quo}_p(\lambda_0),\cdots,
{\mathcal Quo}_p(\lambda_{p-1}))$, that defines ${\mathcal
Cor}^{(2)}_p(\lambda)$. Iterating this process 
we define ${\mathcal Cor}^{(k)}_p(\lambda)$ for any non-negative integer
$k$, and obtain at the end 
%
the $p$-core tower $\mathcal{CT}(\lambda)$ of $\lambda$ as
in~(\ref{eq:coretower}).

\begin{example}
\label{ex:ex2}
We continue with Example~\ref{ex:ex1}. Consider $p=3$. Then the
$p$-abacus of $\lambda$ is
 \begin{center}
\definecolor{sqsqsq}{rgb}{0.12549019607843137,0.12549019607843137,0.12549019607843137}
\begin{tikzpicture}[line cap=round,line join=round,>=triangle
45,x=1.0cm,y=1.0cm, scale=0.7,every node/.style={scale=0.7}]
\draw (-1.,4.)-- (-1.,-2.3);
\draw (0.,4.)-- (0.,-2.3);
\draw (1.,4.)-- (1.,-2.3);
\draw [dash pattern=on 2pt off 2pt](-1.,4.)-- (-1.,5.);
\draw [dash pattern=on 2pt off 2pt](0.,4.)-- (0.,5.);
\draw [dash pattern=on 2pt off 2pt](1.,4.)-- (1.,5.);
\draw [dash pattern=on 2pt off 2pt](-1.,-1.)-- (-1.,-2.);
\draw [dash pattern=on 2pt off 2pt](0.,-1.)-- (0.,-2);
\draw [dash pattern=on 2pt off 2pt](-2.5,1.5)-- (3.5,1.5);
\draw (-1.2,-2.4) node[anchor=north west] {$0$};
\draw (-0.2,-2.4) node[anchor=north west] {$1$};
\draw (0.78,-2.4) node[anchor=north west] {$2$};
\draw (-1.5,2.130963636363638) node[anchor=north west] {$0$};
\draw (-1.5,3.130963636363638) node[anchor=north west] {$3$};
\draw (-0.5,2.11278181818182) node[anchor=north west] {$1$};
\draw (-0.5,3.11278181818182) node[anchor=north west] {$4$};
\draw (0.5,2.0946) node[anchor=north west] {$2$};
\draw (0.5,3.0946) node[anchor=north west] {$5$};
\draw (0.25,1.1127818181818203) node[anchor=north west] {$-1$};
\draw (0.25,0.1127818181818203) node[anchor=north west] {$-4$};
\draw (-0.5,2.11278181818182) node[anchor=north west] {$1$};
\draw (-0.72,1.11) node[anchor=north west] {$-2$};
\draw (-1.5,4.130963636363637) node[anchor=north west] {$6$};
\draw (-1.768363636363628,1.1309636363636386) node[anchor=north west]
{$-3$};
\begin{scriptsize}
\draw [color=black] (-1.,1.)-- ++(-2.5pt,0 pt) -- ++(5.0pt,0 pt) ++(-2.5pt,-2.5pt) -- ++(0 pt,5.0pt);
\draw [color=black] (-1.,3.)-- ++(-2.5pt,0 pt) -- ++(5.0pt,0 pt) ++(-2.5pt,-2.5pt) -- ++(0 pt,5.0pt);
\draw [color=black] (-1.,0.)-- ++(-2.5pt,0 pt) -- ++(5.0pt,0 pt) ++(-2.5pt,-2.5pt) -- ++(0 pt,5.0pt);
\draw [fill=sqsqsq] (-1.,4.) circle (2.5pt);
\draw [fill=black] (-1.,2.) circle (2.5pt);
\draw [fill=black] (-1.,-1.) circle (2.5pt);
\draw [fill=black] (-1.,-2.) circle (2.5pt);
\draw [color=black] (0.,4.)-- ++(-2.5pt,0 pt) -- ++(5.0pt,0 pt) ++(-2.5pt,-2.5pt) -- ++(0 pt,5.0pt);
\draw [color=black] (0.,3.)-- ++(-2.5pt,0 pt) -- ++(5.0pt,0 pt) ++(-2.5pt,-2.5pt) -- ++(0 pt,5.0pt);
\draw [color=black] (0.,2.)-- ++(-2.5pt,0 pt) -- ++(5.0pt,0 pt) ++(-2.5pt,-2.5pt) -- ++(0 pt,5.0pt);
\draw [fill=black] (0.,1.) circle (2.5pt);
\draw [fill=black] (0.,0.) circle (2.5pt);
\draw [fill=black] (0.,-1.) circle (2.5pt);
\draw [fill=black] (0.,-2.) circle (2.5pt);
\draw [color=black] (1.,4.)-- ++(-2.5pt,0 pt) -- ++(5.0pt,0 pt) ++(-2.5pt,-2.5pt) -- ++(0 pt,5.0pt);
\draw [color=black] (1.,1.)-- ++(-2.5pt,0 pt) -- ++(5.0pt,0 pt) ++(-2.5pt,-2.5pt) -- ++(0 pt,5.0pt);
\draw [color=black] (1.,-1.)-- ++(-2.5pt,0 pt) -- ++(5.0pt,0 pt) ++(-2.5pt,-2.5pt) -- ++(0 pt,5.0pt);
\draw [fill=black] (1.,3.) circle (2.5pt);
\draw [fill=black] (1.,2.) circle (2.5pt);
\draw [fill=black] (1.,0.) circle (2.5pt);
\draw [fill=black] (1.,-2.) circle (2.5pt);
\end{scriptsize}
\end{tikzpicture}
\end{center}
Then $\lambda$ has four diagonal hooks corresponding
to the beads in positions $0$, $2$, $5$ and $6$. We have
$$\mathcal X_0=\{0,2\},\quad \mathcal
X_1=\emptyset\quad\text{and}\quad\mathcal X_2=\{0,1\}.$$
{By the discussion after Example 4.2,} the diagonal hooks arising from the $0$-runner have length $1$ and $13$. 
The ones arising from the $2$-runner have length $5$ and $11$. The partition sequences of $\lambda_0$, $\lambda_1$ and
$\lambda_2$ are respectively $\overline 011010\overline 1$, $\overline
0\,\overline 1$ and $\overline 0 10100\overline 1$. Thus,
$$\lambda_0=(3,2),\quad \lambda_1=\emptyset\quad\text{and}\quad
\lambda_2=(2^2,1).$$
\end{example}
\medskip

Suppose $\lambda=\lambda^*$. Then $\Lambda^*=\Lambda$, and
$l_{-\alpha}=1-l_{\alpha-1}$. {Since, by definition, $\alpha$ is the number of zeroes before the leftmost 1, and $\beta$ is the number of ones after the leftmost 0, this switch between 0 and 1 in each position implies that $\alpha=\beta$.} Moreover, for $0\leq u\leq \alpha-1$ and
$\delta\in\{0,1\}$, we have $l_u=\delta$ if and only if
$l_{-u-1}=1-\delta$. Denote by $\phi:\Z\mapsto\Z, u\rightarrow -u-1$.
{We define
{$$\mathcal Y_{\gamma}=\{j\in\Z\mid pj+\gamma\leq -1\ \text{and }
l_{\gamma,j}=1\}.$$}
\begin{lemma} Suppose $\phi$ is as above. Then the following hold. 
\begin{enumerate}[(1)]
\item $\phi$ is a bijection from ${\mathbb Z}$ to ${\mathbb Z}$.
\item $\phi$ induces a bijection $\phi|_{H_0}: H_0\rightarrow H_1$
with inverse map $\phi|_{H_1}:H_1\rightarrow H_0$.
\item $\phi^2=id.$
\item $\phi$ induces a bijection from $\mathcal X_{\gamma}$ to $\mathcal
Y_{p-\gamma-1}$.
\end{enumerate}
\end{lemma}
\begin{proof}
(1) and (3) are immediate. For (2), note, in particular, $\phi$ and the diagonal
hooks of $\lambda$ are the $h_{u,\phi(u)}$ for $u\in H_0$ of length
$2u+1$. For $u\in H_0$, we denote the corresponding diagonal hook-length
by
\begin{equation}
\label{eq:lengthdiag}
d_{u}=2u+1.
\end{equation}
To see (4), suppose that $u=jp+\gamma$ for $j\in\Z$ and $0\leq \gamma\leq p-1$. Then
$-u-1=-jp-\gamma-1=-(j+1)p+p-1-\gamma$ with $0\leq p-1-\gamma\leq p-1$.
Since $l_{\phi(u)}=1$ if and only if $l_u=0$, we have
\begin{equation}
\label{eq:runnersym}
l_{p-1-\gamma,j}=1-l_{\gamma,-(j+1)}
\end{equation}
which is the partition sequence of the conjugate partition of
$\lambda_{\gamma}$.
\end{proof}}
Assume that $\gamma\neq(p-1)/2$. Since $\mathcal X_{p-1-\gamma}$ labels
the diagonal hooks of $\lambda$ arising from runner $(p-1-\gamma)$,
$\mathcal Y_{\gamma}$ does too. Hence, the diagonal hooks of $\lambda$
arising from the runners $\gamma$ and $(p-1-\gamma)$ are parametrized by
$\mathcal X_{\gamma}\cup\mathcal Y_{\gamma}$. By~(\ref{eq:lengthdiag}),
for $x\in\mathcal X_{\gamma}$ and $x'\in\mathcal Y_{\gamma}$, the
corresponding diagonal hook-lengths of $\lambda$ are 
\begin{equation}
\label{eq:longXY}
d_{x}=2(xp+\gamma)+1\quad\text{and}\quad d_{x'}=2((-x'-1)p+p-1-\gamma)+1.
\end{equation}

Denote by $\Gamma$ a set {of representatives} of
$\{\gamma,p-1-\gamma\}$ for {$\{0,\ldots, j,\ldots,
p-1\}\backslash\{(p-1)/2\}$. }
By the discussion above, we have the following.
\begin{corollary}\label{eq:paramemptydiag}
The diagonal hooks of $\lambda$ are
parametrized by the elements of
\[
\mathcal X_{(p-1)/2}\cup\bigcup_{\gamma\in\Gamma}\mathcal
(X_{\gamma}\cup \mathcal Y_{\gamma}).
\]
\end{corollary}

Assume now that $\lambda=\lambda^*$ with ${\mathcal Cor}_p(\lambda)=\emptyset$. Furthermore, assume
that $\lambda_{(p-1)/2}=\emptyset$ where $\lambda_{\frac{(p-1)}{2}}\in {\mathcal Quo}_p(\lambda)$.
Let $0\leq \gamma\leq p-1$. Consider the partition sequence 
$(l_{\gamma,j})_{j\in\Z}$ as in~(\ref{eq:partitionsequencegamma}). Since
the $p$-abacus of Figure~2 
is the one that we obtain after removing all the $p$-hooks of $\lambda$
(because ${\mathcal Cor}_p(\lambda)$ is empty), it follows from the
construction of the $p$-quotient that the number of beads above $j=0$ is
the same as the number of empty positions under and strictly below $j=0$.
In particular, the sequence
$(l_{\gamma,j})_{j\in\Z}$ 
is compatible with the labeling of~(\ref{eq:partseq}), and the beads
over $j=0$ correspond to the diagonal hooks of $\lambda_{\gamma}$ and
are in bijection with the diagonal hooks of $\lambda$ arising from
runner $\gamma$. 

Since $\lambda_{p-1-\gamma}^*=\lambda_{\gamma}$, they have the same number of diagonal hooks. If $d$ is the length of the
$j$th-diagonal hook of $\lambda_{\gamma}$, then we denote by $d^*$ the length of the
$j$th-diagonal hook of $\lambda_{p-1-\gamma}$. 
Write $x\in \mathcal X_{\gamma}$ and $x^*\in \mathcal Y_{\gamma}$ such that $d=d_x$
and $d^*=d_{\phi(x^*)}$.
Then~(\ref{eq:lengthdiag}) gives 
\begin{equation}
\label{eq:dx}
d_x=2(xp+\gamma)+1\quad\text{and}\quad d_{x^*}=2(\phi(x^*)p+(p-1)-\gamma)+1.
\end{equation}
Hence, if we set $w_{x,x^*}=x-x^*$, then
\begin{equation}
\label{eq:brother}
d_x+d_{x^*}=2pw_{x,x^*}.
\end{equation}
{Moreover, by~(\ref{eq:paramemptydiag}) 
$$\mathfrak d(\lambda)=\bigcup_{\gamma\in\Gamma}\{d_x,\,d_{x^*}\,|\,x\in \mathcal
X_{\gamma}\},$$}
where $\mathfrak d(\lambda)$ is defined in~(\ref{eq:hooklength})

\begin{example}
Consider the partition $\lambda=(7^2,5,4,3,2^2)$ in
Example~\ref{ex:ex2}. 
We see from the $3$-abacus that
${\mathcal Cor}_3(\lambda)$ is empty. We also see that
$$\mathcal Y_{0}=\{-2,-1\}\quad\text{and}\quad \mathcal Y_2=\{-1,-3\}.$$
The bijection between $\mathcal Y_0$ and $\mathcal X_2$ is
$$-2\mapsto \phi(-2)=2-1=1\quad\text{and}\quad -1\mapsto
\phi(-1)=1-1=0.$$ Then $\mathfrak d(\lambda)$ is given
by~(\ref{eq:longXY})
{$$\{d_x\mid x\in \mathcal
X_{0}\}=\{d_0,\,d_2\}=\{1,13\}\quad\text{and}\quad
\{d_x\mid x\in \mathcal
Y_{0}\}=\{d_{-2},\,d_{-1}\}=\{5,11\}.$$}
In particular, the diagonal hooks of length $1$ of $\lambda_0$ and
$\lambda_2$ are associated with $1\in \mathcal X_0$ and $-1\in\mathcal
Y_0$. Similarly, the ones of length $4$ correspond to $2\in\mathcal X_0$
and $-3\in\mathcal Y_0$. It follows that
$$1^*=-1\quad\text{and}\quad 2^*=-3.$$
\end{example}
\subsection{Diagonal hooks of regular partitions}
Let $p$ be an odd prime, $n$ an integer divisible by $p$, and
$\lambda=\lambda^*$ be a partition of $n$. Let $n=n_1p+n_2p^2+\cdots+n_sp^s$ be its $p$-adic
expansion. Write $I=\{0,\ldots,p-1\}$ as above, and
{the $p$-core tower} $\mathcal{CT}(\lambda)$ of $\lambda$ as
in~(\ref{eq:coretower}). We
assume that the ${\mathcal Cor}_p(\lambda)=\emptyset$.  We say that $\lambda$ is a \emph{regular partition} when 
$c_k(\lambda)=n_k$ and $\lambda_{\underline{p}^*}=\emptyset$
where $\underline{p}^*\in I^k$ for any $1\leq k\leq s$.
On the other hand, $\lambda$ is called \emph{singular} whenever 
$\lambda_{\underline j}=\emptyset$, except possibly for $\underline j=\underline
p^*(k)\in I^k$, where $\underline p^*(k)$ is defined
in Equation~(\ref{eq:firstetoile}).

For $\lambda$ as above, we also define $\mathfrak r(\lambda)$ and $\mathfrak s(\lambda)$ the {\it regular} and {\it singular} parts (respectively) by giving their
$p$-core towers as follows. For $k\geq 0$ and $\underline j\in I^k$, if
$\underline j\neq \underline p^*(k)$, then we set $\lambda'_{\underline
j}=\lambda_{\underline j}$ and $\lambda''_{\underline j}=\emptyset$.
Otherwise, if $\underline j=\underline p^*(k)$, then write
$\lambda'_{\underline p^*(k)}=\emptyset$ and $\lambda''_{\underline
p^*(k)}=\lambda_{\underline p^*(k)}$. 

Therefore, the $p$-core towers of
$\mathfrak r(\lambda)$ and $\mathfrak s(\lambda)$ are given by
{\begin{equation}
\label{eq:ctlambdaprimeseconde}
{\mathcal Cor}^{(k)}_p(\mathfrak r(\lambda))=
\{\lambda'_{\underline j}\mid \underline j\in I^k\}\quad\text{and}\quad
{\mathcal Cor}^{(k)}_p(\mathfrak s(\lambda))=\{\lambda''_{\underline
j}\mid \underline j\in I^k\}\quad \text{for }k\geq 0.
\end{equation}}
Recall $\underline p^*(k)\in
I^k$. Then that $c_{k}(\mathfrak s(\lambda))=|\lambda_{\underline
p^*(k)}|$ and $c_{k}(\mathfrak
r(\lambda))=c_k(\lambda)-c_{k}(\mathfrak s(\lambda))$ by construction.
Hence, if we set $n'=\sum c_k(\mathfrak r(\lambda))p^k$ and
$n''=\sum c_k(\mathfrak s(\lambda))p^k$, then $n=n'+n''$ and ${\mathfrak
r(\lambda)}$ and ${\mathfrak s(\lambda)}$ are respectively regular and singular
partitions of $n'$ and $n''$ in the previous sense.
\begin{proposition}
\label{prop:crochetsfreres}
Let $n$ be an integer with $p$-adic expansion
$n=n_1p+n_2p^2+\cdots+n_sp^s$, where $p$ is an odd prime.
Let $\lambda$ be a regular partition with $p$-core tower
{${\mathcal Cor}^{(k)}_p(\lambda)=\{\lambda_{\underline
j}\mid \underline j\in I^{k}\}$} for $k\geq 0$.
For any integer $0\leq i\leq s-1$, write ${\mathcal H}_i$ for the set of diagonal
hooks lengths of $\lambda$ which are divisible by $p^i$ but not by $p^{i+1}$.
Then the elements of ${\mathcal H}_i$ are of the form $t_{u,i}=p^iu$ and
$t_{u,i}^*=p^i(w_{u,i}p-u)$, where $u\in U_i$ is an odd integer
relatively prime to $p$, and $w_{u,i}\in W_j$ is an even integer.
\end{proposition}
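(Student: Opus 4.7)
The plan is to argue by induction on $i$, iterating the passage from the symmetric partition $\lambda$ to the $(p-1)/2$-component $\mu=\lambda^{(p-1)/2}$ of its $p$-quotient.

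For the base case $i=0$, I invoke the description of diagonal hooks of $\lambda$ from Section~\ref{subsec:combinatorics}. Since $\lambda=\lambda^*$ and ${\mathcal Cor}_p(\lambda)=\emptyset$, these hooks are parametrized as in~(\ref{eq:paramemptydiag}), and by~(\ref{eq:longXY}) a hook arising from runner $\gamma$ has length $d_x=2(xp+\gamma)+1$, which is divisible by $p$ iff $\gamma=(p-1)/2$. Hence $H_0$ consists exactly of the hooks from runners $\gamma\in\Gamma$, and by~(\ref{eq:brother}) these come in pairs $(d_x,d_{x^*})$ with $d_x+d_{x^*}=2p(x-x^*)$. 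Setting $u=d_x$ (which is odd and coprime to $p$, the latter because $\gamma\neq(p-1)/2$) and $w_{u,0}=2(x-x^*)$ (visibly even), each pair takes the required form $\{u,\,w_{u,0}p-u\}$.

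For the inductive step ($i\geq 1$), a diagonal hook of $\lambda$ arising from runner $(p-1)/2$ has length $p(2y+1)$, where $2y+1$ is the length of the corresponding diagonal hook of $\mu$; hence such a hook lies in $H_i$ iff the matching hook of $\mu$ lies in its analogous set at depth $i-1$. The partition $\mu$ is itself symmetric (because $\lambda_\gamma^*=\lambda_{p-1-\gamma}$ forces $\mu^*=\mu$), has empty $p$-core (this is precisely the condition $\lambda_{\underline p^*}=\emptyset$ at level $1$), and the regularity conditions for $\lambda$ at levels $k\geq 2$ translate verbatim into the regularity conditions for $\mu$ at levels $k-1$. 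By the inductive hypothesis applied to $\mu$, its hooks at depth $i-1$ form pairs of the form $(p^{i-1}u,\,p^{i-1}(wp-u))$ with $u$ odd and coprime to $p$ and with $w$ even; multiplying by $p$ gives the required form $(p^iu,\,p^i(wp-u))$ for the corresponding hooks of $\lambda$ in $H_i$.

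The induction terminates since, by the $p$-adic expansion $n=n_1p+\cdots+n_sp^s$, the $k$-fold iterated middle-component $\mu^{(k)}$ vanishes for $k\geq s$, so $H_i=\emptyset$ for $i\geq s$, consistent with the stated range. The main point to verify is the abacus bookkeeping: a bead at position $v$ on $\mu$'s $p$-abacus corresponds exactly to a bead at position $pv+(p-1)/2$ on runner $(p-1)/2$ of $\lambda$, which ensures hook lengths multiply by $p$ under lifting and that the pairing $x\leftrightarrow x^*$ descends correctly through the recursion. The parity of $w$ is automatic throughout from the factor $2$ appearing in~(\ref{eq:brother}).
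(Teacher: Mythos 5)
Your proposal is correct and takes essentially the same route as the paper: the paper also argues by induction (on $s$, which amounts to the same recursion as your induction on $i$), settles the depth-zero hooks via~(\ref{eq:dx}) and the pairing~(\ref{eq:brother}), and uses the same bijection sending each diagonal hook of $\lambda_{(p-1)/2}$ to a diagonal hook of $\lambda$ of $p$ times its length, so that $H_i$ corresponds to $H'_{i-1}$ of the middle quotient component.
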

\begin{proof}
We proceed by induction on $s\geq 1$. Suppose that $s=1$.  Then
$n=n_1p$. Note that ${\mathcal Cor}_p(\lambda)=\emptyset$ by
assumption, thus we are in the situation described above.
By~(\ref{eq:dx}) we set $t_{u,0}=d_x$ and
$t_{u,0}^*=d_{x^*}$, and~(\ref{eq:brother}) gives that $t_{u,0}^* =
pw_u-t_{u,0}$
with $w_{u,0}=2w_{x,x^*}$. In particular, $t_{u,0}$ and $t_{u,0}^*$ are odd and
prime to $p$
and $w_{u,0}$ is even. 
The result is true for $s=1$.

Let $s\geq 1$. Suppose that the result holds for $s$. Let
$n=n_1p+n_2p^2+\cdots+n_sp^s+n_{s+1}p^{s+1}$, and $\lambda$ be a
partition of $n$ that satisfies the assumption. Consider
$\lambda'=\lambda_{(p-1)/2}\in{\mathcal Quo}_p(\lambda)$ and
$n'=|\lambda'|$. One has $n=p(n'+\sum_{j\neq (p-1)/2}|\lambda_j|)$
because
${\mathcal Cor}_p(\lambda)=\emptyset$, and $n'$ is divisible by $p$ because
${\mathcal Cor}_p(\lambda')=\emptyset$, since $\lambda$ is regular. Thus, 
the $p$-adic expansion of $n'$ is then of the form
$n'_1p+\cdots+n'_h p^{h}$ with $h\leq s$. 
By induction, the diagonal hooks of $\lambda'$
are as required. Now, there is a bijection $f$ between the diagonal hooks of
$\lambda$ divisible by $p$ and the diagonal hooks of
$\lambda_{(p-1)/2}$ such that $|f(h_{mm})|=p|h_{mm}|$, where $h_{mm}$ is
a diagonal hook of $\lambda_{(p-1)/2}$. In particular, for $1\leq i\leq
s$, we have $H_i=f(H'_{i-1})$ where $H'_{i-1}$ is the set of diagonal
hooks of $\lambda'$ divisible by $p^{i-1}$ but not by $p^i$. On the
other hand, since ${\mathcal Cor}_p(\lambda)=\emptyset$, $H_0$ is the set
of diagonal hooks arising from ${\mathcal Quo}_p(\lambda)=(\lambda_{0},\ldots,\lambda_{(p-3)/2},\emptyset,\lambda_{(p+1)/2},\ldots,
\lambda_{p-1})$,
and~(\ref{eq:dx}) and~(\ref{eq:brother}) give the result.
\end{proof}

\begin{proposition}
\label{prop:bougereg}
Let $\lambda$ be a regular partition of $n$. 
If $f\in\mathcal K_{n!/2}$ then
$\varepsilon(\chi_{\lambda},f)=1$. Moreover, 
$$\varepsilon(\chi_{\lambda},\sigma_{n!/2})=(-1)^{\frac{(p-1)n}4}.$$
\end{proposition}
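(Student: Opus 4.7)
I would proceed by reducing the question to a controlled algebraic quantity, then using Proposition~\ref{prop:crochetsfreres} together with Proposition~\ref{prop:racbouge} to evaluate the relevant signs. The starting point is~(\ref{eq:bougealt}): for $\lambda=\lambda^*$ and $f\in\mathcal H_{n!/2}$,
$$\varepsilon(\chi_{\lambda},f)=\varepsilon\!\left(i^{(n-d_{\lambda})/2}\sqrt{\prod_{h\in\mathfrak D(\lambda)}h},\,f\right).$$
Using Proposition~\ref{prop:crochetsfreres}, I would pair up the diagonal hooks of a regular partition into $K=d_{\lambda}/2$ pairs $\{t_{u,i},t^*_{u,i}\}=\{p^iu,p^i(w_{u,i}p-u)\}$ (with $u$ odd, coprime to $p$, and $w_{u,i}$ even), so that
$$\prod_{h\in\mathfrak D(\lambda)}h=p^{2E}M\quad\text{with}\quad E=\sum_{(u,i)}i,\quad M=\prod_{(u,i)}u(w_{u,i}p-u),$$
where $M$ is an odd integer coprime to $p$ and $p^E\in\Z$. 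In particular $\sqrt{\prod h}=p^E\sqrt M$, so only the factors $i^{(n-d_{\lambda})/2}$ and $\sqrt M$ can be moved by $f$.

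For the first assertion, $f\in\mathcal K_{n!/2}$ fixes every $p'$-root of unity; since $p$ is odd, $i=\omega_4$ is a $p'$-root of unity and $\omega_M$ is a $p'$-root of unity (as $M$ is coprime to $p$). Thus $f(i)=i$ and by Proposition~\ref{prop:racbouge} (applied with $r\equiv 1\pmod M$), $f(\sqrt M)=\sqrt M$. This gives $\varepsilon(\chi_{\lambda},f)=1$ at once.

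For the formula at $\sigma=\sigma_{n!/2}$, I would use $\sigma(i)=i^p$, whence $\varepsilon(i,\sigma)=(-1)^{(p-1)/2}$ and $i^{p-1}=(-1)^{(p-1)/2}$, giving
$$\sigma(i^{(n-d_{\lambda})/2})=(-1)^{(p-1)(n-d_{\lambda})/4}\,i^{(n-d_{\lambda})/2}.$$
Since $\sigma$ acts on any $p'$-root of unity by $x\mapsto x^p$, Proposition~\ref{prop:racbouge} applied with $r=p$ yields
$$\sigma(\sqrt M)=(-1)^{(p-1)(M-1)/4}\left(\tfrac{p}{M}\right)\sqrt M.$$
Multiplying the two contributions,
$$\varepsilon(\chi_{\lambda},\sigma)=(-1)^{(p-1)(n-d_{\lambda})/4+(p-1)(M-1)/4}\left(\tfrac{p}{M}\right).$$
Now I would apply quadratic reciprocity $\bigl(\tfrac{p}{M}\bigr)=(-1)^{(p-1)(M-1)/4}\bigl(\tfrac{M}{p}\bigr)$ and observe that $(p-1)(M-1)/2$ is even (both $p-1$ and $M-1$ are even), so the two $(p-1)(M-1)/4$ terms collapse to $1$, leaving
$$\varepsilon(\chi_{\lambda},\sigma)=(-1)^{(p-1)(n-d_{\lambda})/4}\,\bigl(\tfrac{M}{p}\bigr).$$

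Finally I would compute the Legendre symbol. Reducing modulo $p$, each pair contributes $u(w_{u,i}p-u)\equiv -u^{2}\pmod p$, so $M\equiv(-1)^{K}\bigl(\prod u\bigr)^{2}\pmod p$. Therefore $\bigl(\tfrac{M}{p}\bigr)=\bigl(\tfrac{-1}{p}\bigr)^{K}=(-1)^{(p-1)K/2}=(-1)^{(p-1)d_{\lambda}/4}$, which combined with the previous display gives
$$\varepsilon(\chi_{\lambda},\sigma_{n!/2})=(-1)^{(p-1)(n-d_{\lambda})/4+(p-1)d_{\lambda}/4}=(-1)^{(p-1)n/4},$$
as required. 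The main technical obstacle is bookkeeping the parities: one must verify that $n$ is even (which follows because each pair contributes $p^{i+1}w_{u,i}$ with $w_{u,i}$ even), so that $(p-1)n/4\in\Z$, and one must carefully match exponents from Proposition~\ref{prop:racbouge} with the quadratic reciprocity correction. Once the book-keeping is under control, the argument is essentially the Gauss-sum computation packaged through Proposition~\ref{prop:racbouge}.
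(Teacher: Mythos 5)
Your argument is correct, and it rests on the same pillars as the paper's proof -- the reduction~(\ref{eq:bougealt}), the pairing of diagonal hooks $\{p^iu,\,p^i(w_{u,i}p-u)\}$ from Proposition~\ref{prop:crochetsfreres}, and Proposition~\ref{prop:racbouge} -- but your endgame for $\sigma_{n!/2}$ is organized differently. The paper first computes $M=\prod u(w_{u,i}p-u)$ modulo $4$ (its (\ref{eq:cal1})--(\ref{eq:cal3})), shows $M\equiv(-1)^{(n-d_\lambda)/2}\bmod 4$ using the parity identity $n/2\equiv|A_{\mathrm{odd}}|\bmod 2$, and then applies quadratic reciprocity pair by pair (\ref{eq:calculsymbol}) to get $\left(\frac{p}{M}\right)=(-1)^{(p-1)|A_{\mathrm{odd}}|/2}$. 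You instead flip the Jacobi symbol once, $\left(\frac{p}{M}\right)=(-1)^{\frac{(p-1)(M-1)}{4}}\left(\frac{M}{p}\right)$, note that this sign cancels the factor $\varepsilon(i,\sigma)^{(M-1)/2}=(-1)^{\frac{(p-1)(M-1)}{4}}$ coming from Proposition~\ref{prop:racbouge} (their product is $(-1)^{(p-1)(M-1)/2}=1$), and evaluate $\left(\frac{M}{p}\right)$ by reducing mod $p$: each pair contributes $-u^2$, so $\left(\frac{M}{p}\right)=\left(\frac{-1}{p}\right)^{d_\lambda/2}=(-1)^{(p-1)d_\lambda/4}$. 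This buys you a shorter computation: no mod-$4$ analysis of $M$ and no need for the congruence $n/2\equiv|A_{\mathrm{odd}}|$; you only need $n$ and $d_\lambda$ even, which you justify. What the paper's longer bookkeeping buys, on the other hand, is a stock of intermediate congruences that get reused later: the proof of Proposition~\ref{prop:irrcorequotient} explicitly invokes the mod-$4$ identity and the intermediate formula $\varepsilon(\chi_\lambda,\sigma_{n!/2})=\left(\frac{p}{q}\right)$ ``from the proof of Proposition~\ref{prop:bougereg}'', so your streamlined version, while a complete proof of this proposition, would not directly supply those auxiliary facts.
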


\begin{proof}
First, we remark that if the $p$-adic expansion of $n$ is
$n_1p+\cdots+n_sp^s$ then each $n_i$ is even since $n_i=2\sum_{\underline
j}|\lambda_{\underline j}|$, where the sum runs
over $\underline j\neq \underline p^*(k)$ and $\underline j$ is a
representative of $\{\underline j,\underline j^*\}$. Here we use that
{${\lambda}$ is a symmetric} partition and that
$|\lambda_{\underline j}|=|\lambda_{\underline
j}^*|=|\lambda_{\underline j^*}|$. Now, by
Proposition~\ref{prop:crochetsfreres}, we have
\begin{align*}
\prod_{h\in {\mathfrak{d}(\lambda)}}h&=\prod_{i=0}^{s-1}\prod_{u\in
U_i}t_{u,i}t_{u,i}^*\\
&=\prod_{i=0}^{s-1}\prod_{u\in
U_i}p^{2i}u(w_{u,i}p-u).
\end{align*}  
Let $f$ be in $\mathcal H_{n!/2}$. With the 
notation~(\ref{eq:notationbouge}), we have
\begin{align}
\label{eq:cal0}
\varepsilon\left(\sqrt{\prod_{h\in{\mathfrak{
D}(\lambda)}}h},f\right)&=\varepsilon\left(\prod_{i=0}^{s-1}\prod_{u\in
U_i}\sqrt{u(w_{u,i}p-u)},f\right)\\
\nonumber&=\prod_{i=0}^{\pu{s-1}}\varepsilon\left(\prod_{u\in
U_i}\sqrt{u(w_{u,i}p-u)},f\right).
\end{align}
Note that $u$ and $(w_{u,i}p-u)$ are odd. Furthermore, 
{
\begin{equation}
\label{eq:cal1}
u(w_{u,i}p-u)=
\begin{cases}
2-u^2\equiv 1\mod 4&\text{if } w_{u,i}\equiv 2\mod 4,\\
-u^2\equiv -1\mod 4&\text{if } w_{u,i}\equiv 0\mod 4.\\
\end{cases}
\end{equation}
}
We also have
$$n=\sum_{h\in D(\lambda)}h=\sum_{i=0}^{s-1}p^{i+1}\sum_{u\in
U_i}w_{u,i}.$$ Since $w_{u,i}$ is even, there is an integer $w_{u,i}'$
such that $w_{u,i}=2w_{u,i}'$, and
\begin{align}
\label{eq:cal2}
\frac n 2&=\sum_{i=0}^{s-1}p^{i+1}\sum_{u\in
U_i}w_{u,i}'\\
\nonumber&\equiv \sum_{i=0}^{s-1}\sum_{u\in
U_i}w_{u,i}'\mod 2,
\end{align}
because $p$ is odd. Now, write {$A=\{w'_{u,i}\mid 0\leq i\leq s-1,\,u\in
U_i\}$}, and $A_{\text{even}}$ and $A_{\text{odd}}$ {for} the
subsets of even and odd elements of $A$, respectively. Then
$|A|=\frac{d_{\lambda}}{2}$ and~(\ref{eq:cal2}) gives 
$$\frac n 2\equiv \sum_{w\in A_{\text{odd}}}w\equiv \sum_{w\in
A_{\text{odd}}}1\equiv |A_{\text{odd}}| \mod 2.$$
Since $|A|=|A_{\text{odd}}|+|A_{\text{even}}|$, we deduce
from~(\ref{eq:cal1}) that
\begin{equation}
\label{eq:cal3}
\prod_{i=0}^{s-1}\prod_{u\in U_i}u(w_{u,i}p-u)\equiv
(-1)^{|A_{\text{even}}|}\equiv (-1)^{|A|-|A_{\text{odd}|}}\equiv
(-1)^{\frac{n-d_{\lambda}}2}\mod 4. 
\end{equation}
Thus, by~(\ref{eq:cal0}) and 
Proposition~\ref{prop:racbouge} we obtain
\begin{equation}
\label{eq:cal4}
\varepsilon\left(\sqrt{\prod_{h\in
{\mathfrak{d}(\lambda)}}h},f\right)=\varepsilon(i,f)^{\frac{(p-1)(n-d_{\lambda})}4}\prod_{i=0}^{s-1}\prod_{u\in
U_i}\left(\frac{r}{u}\right)\left(\frac{r}{w_{u,i}p-u}\right),
\end{equation}
where $r$ is such that $f(\omega_m)=\omega_m^r$ for $m=\prod_{i,u}
u(w_{u,i}p-u)$. Note that if $f\in\mathcal K_{n!/2}$, then $f$ acts
trivially on $i$ and on $\omega_m$, that is $r=1$, 
and~(\ref{eq:bougealt}) implies that $\varepsilon(\chi_{\lambda},f)=1$. 
Assume that $f=\sigma_{n!/2}$, that is $r=p$. On the other hand, by
quadratic reciprocity, one has
\begin{align}
\label{eq:calculsymbol}
\left(\frac{p}{u}\right)\left(\frac{p}{w_{u,i}p-u}\right)&=(-1)^{\frac{p-1}2
\left(\frac{u-1}2+\frac{w_{u,i}p-u-1}2\right)}\left(\frac{-1}{p}\right)\\
\nonumber &=(-1)^{\frac{p-1}2
\left(\frac{u-1}2+\frac{w_{u,i}p-u-1}2+1\right)}\\
\nonumber &=(-1)^{\frac{(p-1)w_{u,i}}4}\\
\nonumber &=(-1)^{\frac{(p-1)w_{u,i}'}2}\\
\nonumber &=
\begin{cases}
1&\text{if }w_{u,i}'\equiv 0 \mod 2,\\
-1&\text{if }w_{u,i}'\equiv 1\mod 2.  
\end{cases}
\end{align}
Using~(\ref{eq:bougealt}), it follows that
\begin{align*}
\varepsilon(\chi_{\lambda},\sigma_{n!/2})&=(-1)^{\frac{(p-1)(n-d_{\lambda})}4}\varepsilon\left(\sqrt{\prod_{h\in
{\mathfrak{d}(\lambda)}}h},f\right)\\
&=(-1)^{\frac{(p-1)(n-d_{\lambda})}4}\cdot
(-1)^{\frac{(p-1)(n-d_{\lambda})}4}\cdot(-1)^{\frac{(p-1)|A_{\text{odd}}|}2}\\
&=(-1)^{\frac{(p-1)n}{4}},
\end{align*}
as required.
\end{proof}
\subsection{Diagonal hooks of partitions with non-empty $p$-core}
\label{subsec:nontriv}
For any partition $\lambda$, we denote by ${\mathcal Q}_p(\lambda)$ the
partition with the same $p$-quotient as $\lambda$ but with empty $p$-core.
That is, ${\mathcal Quo}_p({\mathcal Q}_p(\lambda))={\mathcal Quo}_p(\lambda)$
but ${\mathcal Cor}_p({\mathcal Q}_p(\lambda))=\emptyset.$

Let $\lambda=\lambda^*$. 
{Write $\mathcal M=(m_u)_{u\in\Z}$,  $\mathcal M'=(m'_u)_{u\in\Z}$ and
$\Lambda=(l_u)_{u\in\Z}$ for the partition sequences with the labeling as
in~(\ref{eq:partseq}) associated to $\lambda$,}
${\mathcal Cor}_p(\lambda)$ and ${\mathcal Q}_p(\lambda)$ respectively.

Since $\lambda=\lambda^*$ we have ${\mathcal Cor}_p(\lambda)={\mathcal
Cor}^*_{p}{(\lambda)}$ by~\cite[Prop.\,3.5]{Olsson}. Let
$0\leq\gamma\leq p-1$. By definition of a {$p$-core,}
if $m'_{\gamma}=0$,
then there is an integer $\delta_{\gamma}>0$ such that
$m'_{pj+\gamma}=0$ if and only if $j\leq \delta_{\gamma}-1$.
Since
${\mathcal Cor}_p(\lambda)={\mathcal Cor}^{*}_{p}(\lambda)$ it follows from~\S\ref{subsec:combinatorics}
that $m'_{pj+(p-1)-\gamma}=0$ if and only if $j<-\delta_{\gamma}$.
If $m'_{\gamma}=1$ and $m'_{-p-\gamma}=0$ then $m'_{(p-1)-\gamma}=1$ and
$m'_{-p+(p-1)-\gamma}=0$. In this last case, we set $\delta_{\gamma}=0$.
Let $\gamma$ be such that $\delta_{\gamma}>0$. {Define
$\Delta_{\gamma}=\{0\leq j\leq \delta_{\gamma}-1\}$.} Then elements of ${\mathfrak D}({\mathcal Cor}_p(\lambda))$ are {labeled} by the elements of
$\cup_{\delta_{\gamma}>0}{\Delta}_{\gamma}$. In particular ${\mathcal Cor}_p(\lambda)$ has
$\sum_{\delta_{\gamma}>0}\delta_{\gamma}$ diagonal hooks.

We construct the $p$-abacus of $\lambda$ from that of ${\mathcal Q}_p(\lambda)$ as
follows. 
If $\delta_{\gamma}=0$ then the runners $\gamma$ and
$(p-1-\gamma)$ of ${\mathcal Q}_p(\lambda)$ and $\lambda$ are identical.
If $\delta_{\gamma}>0$, then runner $\gamma$ of $\lambda$ (resp. the
runner $p-1-\gamma$ of $\lambda$) is obtained by shifting up (resp. down) the corresponding runner of ${\mathcal Q}_p(\lambda)$ $\delta_{\gamma}$ positions.
It follows that, for all $0\leq\gamma\leq p-1$ such that
$\delta_{\gamma}\geq 0$, one has
\begin{equation}
\label{eq:relationquotientvidequotient}
m_{(j+\delta_{\gamma})p+\gamma}=
l_{jp+\gamma}\quad\text{and}\quad
m_{(j-\delta_{\gamma})p+p-1-\gamma}=l_{jp+p-1-\gamma}\quad \text{for all
}j\in\Z.
\end{equation}

We will now describe how to obtain ${\mathfrak D}(\lambda)$ from ${\mathfrak D}({\mathcal Q}_p(\lambda))$. For $\gamma\in\Gamma\cup\{(p-1)/2\}$, we denote by
$\mathcal X_{\gamma}$ and $\mathcal Y_{\gamma}$ (respectively $\mathcal
X'_{\gamma}$ and $\mathcal Y'_{\gamma}$) the sets as
in~(\ref{eq:paramemptydiag}) that label the diagonal hooks of
{${\mathcal Q}_p(\lambda)$ (respectively, of $\lambda$)}. 

We remark that if $\delta_{\gamma}=0$, then $\mathcal
X_{\gamma}=\mathcal X'_{\gamma}$ and $\mathcal Y_{\gamma}=\mathcal
Y'_{\gamma}$, that is the hooks of 
$\lambda$ and ${\mathcal Q}_p(\lambda)$ arising from runner $\gamma$ are the same. Note
that $\delta_{(p-1)/2}=0$, {since $\lambda=\lambda^*$}.

Suppose $\delta_{\gamma}>0$. {We introduce four possibilities in passing from the diagonal hooks of ${\mathcal Q}_p(\lambda)$ to those of $\lambda$.}

\begin{enumerate}[(i)]
\item
Any $x\in \mathcal X_{\gamma}$ corresponds to a hook labeled by $x+\delta_{\gamma}\in \mathcal X'_{\gamma}$ of $\lambda$ on the 
$\gamma$-runner. More precisely,
by~(\ref{eq:relationquotientvidequotient}) we can
associate to the hook of length $d_x$ of
${\mathcal Q}_p(\lambda)$ labeled by $x$ given
in~(\ref{eq:dx}), a hook of $\lambda$ of length
\begin{equation}
\label{eq:imx0}
c(d_x)=2((x+\delta_{\gamma})p+\gamma)+1.
\end{equation} 
{We will call this \it{an increase of the length of an existing hook with respect to $\gamma$}.}
\item
Similarly, for $x\in\mathcal Y_{\gamma}$ such that
$x<-\delta_{\gamma}$, we have
$\delta_{\gamma}+x <0$, and $\delta_{\gamma}+x\in\mathcal Y'_{\gamma}$
by~(\ref{eq:relationquotientvidequotient}). 
By~(\ref{eq:dx}), we associate to $d_x$ a hook of $\lambda$ of length
\begin{equation}
\label{eq:imx1}
c(d_x)=2(\phi(\delta_{\gamma}+x)p+(p-1)-\gamma)+1.
\end{equation}
{We will refer to this as \it{an increase of the length of an existing hook with respect to $\gamma^*=p-\gamma-1$}.}
\item Let $-\delta_{\gamma}\leq x\leq -1$ be such that $x\notin \mathcal
Y_{\gamma}$, that is $l_{xp+\gamma}=0$.
Then $x+\delta_{\gamma}\geq 0$ and
by~(\ref{eq:relationquotientvidequotient}), $x+\delta_{\gamma}\in
\mathcal X'_{\gamma}$.
Hence, a new diagonal hook of length
$$c_x=2((\delta_{\gamma}+x)p+\gamma)+1$$ appears in $\lambda$. This is also a {diagonal} hook of
${\mathcal Cor}_p(\lambda)$.
{We will call this \it{the appearance of a new hook with respect to $\gamma$.}}
\item Finally, let $-\delta_{\gamma}\leq x\leq -1$ be such that $x\in \mathcal
Y_{\gamma}$, that is $l_{xp+\gamma}=1$. Then $x+\delta_{\gamma}\notin
\mathcal X'_{\gamma}$. Then the hook of ${\mathcal Q}_p(\lambda)$ labeled by $x$ gives
no hook of $\lambda$.
{We will call this \it{the disappearance of an existing hook with respect to $\gamma^*=p-\gamma-1$.}}
\end{enumerate}

\begin{remark}
\label{rk:diagcore}
Let $\mathcal A_{\gamma}$ and $\mathcal
B_{\gamma}$ be the set of
$-\delta_{\gamma}\leq x\leq -1$ such that $l_{px+\gamma}=0$ and
$l_{px+\gamma}=1$, respectively. 
Then $\mathcal A_{\gamma}\sqcup\mathcal B_{\gamma}$ labels the
diagonal hooks of ${\mathcal Cor}_p(\lambda))$ as follows: associate the set of diagonal hooks of
${\mathcal Cor}_p(\lambda)$ of length
\begin{equation}
\label{eq:paramdiagcore}
c_x=2((\delta_{\gamma}+x)p+\gamma)+1
\end{equation}
to $\mathcal A_{\gamma}\sqcup\mathcal B_{\gamma}$.
\end{remark}
In the next example we use the fact that the $p$-abacus of {
${\mathcal Cor}_p(\lambda)$ is obtained} from the $p$-abacus of $\lambda$
by placing beads in empty positions one position below them on each runner until this is no longer possible, and then
reading off the {resulting} partition from the new $p$-abacus
configuration.  by~\cite[p.\,79]{James-Kerber}. 
\begin{example}
Let {$\lambda=(16,11,3,2^8,1^5)$}. We find ${\mathfrak D}(\lambda)$
using the $3$-abaci of ${\mathcal Cor}_3(\lambda)$ and {${\mathcal
Q}_3(\lambda)$}.
\begin{center}
\begin{tikzpicture}[line cap=round,line join=round,>=triangle
45,x=1.0cm,y=1.0cm,scale=0.6,every node/.style={scale=0.6}]
\clip(-1.6614876033057846,-8.) rectangle (10.87570247933884,1.758677685950413);
\draw (5.,-7.5)-- (5.,1.);
\draw (5.5,1.)-- (5.5,-7.5);
\draw (6.,-7.5)-- (6.,1.);
\draw (6.5,1.)-- (6.5,-7.5);
\draw (7.,-7.5)-- (7.,1.);
\draw (7.5,1.)-- (7.5,-7.5);
\draw (8.,-7.5)-- (8.,1.);
\draw (8.5,-7.5)-- (8.5,1.);

\draw (0.5,1.)-- (0.5,-7.5);
\draw (1.,1.)-- (1.,-7.5);
\draw (1.5,1.)-- (1.5,-7.5);
\draw (2.,1.)-- (2.,-7.5);
\draw (2.5,1.)-- (2.5,-7.5);
\draw (3.,-7.5)-- (3.,1.);
\draw (3.5,1.)-- (3.5,-7.5);
\draw (4.,-7.5)-- (4.,1.);
\draw (4.5,1.)-- (4.5,-7.5);
\draw (0.,1.)-- (8.5,1.);
\draw (0.,0.5)-- (8.5,0.5);
\draw (8.5,0.)-- (0.,0.);
\draw (0.,-0.5)-- (8.5,-0.5);
\draw (8.5,-1.)-- (0.,-1.);
\draw (0.,-1.5)-- (8.5,-1.5);
\draw (0.,-2.)-- (8.5,-2.);
\draw (0.,-2.5)-- (8.5,-2.5);
\draw (0.,-3.)-- (8.5,-3.);
\draw (0.,-4.)-- (8.5,-4.);
\draw (8.5,-3.5)-- (0.,-3.5);
\draw (0.,-4.5)-- (8.5,-4.5);
\draw (0.,-5)-- (8.5,-5);
\draw (0.,-5.5)-- (8.5,-5.5);
\draw (0.,-6.)-- (8.5,-6.);
\draw (0.,-6.5)-- (8.5,-6.5);
\draw (0.,-7.)-- (8.5,-7.);
\draw (0.,-7.5)-- (8.5,-7.5);
\draw (0.,-7.5)-- (0.,1.);

\draw (-0.4,-7) node[anchor=north west]
{$0$};
\draw (-0.4,-6.5) node[anchor=north west]
{$1$};
\draw (-0.4,-6) node[anchor=north west]
{$0$};
\draw (-0.4,-5.5) node[anchor=north west]
{$0$};
\draw (-0.4,-5) node[anchor=north west]
{$0$};
\draw (-0.4,-4.5) node[anchor=north west]
{$0$};
\draw (-0.4,-4) node[anchor=north west]
{$0$};
\draw (-0.4,-3.5) node[anchor=north west]
{$1$};

\draw (-0.4,-3.) node[anchor=north west]
{$0$};
\draw (-0.4,-2.5) node[anchor=north west]
{$0$};
\draw (-0.4,-2.) node[anchor=north west]
{$0$};
\draw (-0.4,-1.5) node[anchor=north west]
{$0$};
\draw (-0.4,-1.) node[anchor=north west]
{$0$};
\draw (-0.4,-0.5) node[anchor=north west]
{$0$};
\draw (-0.4,0) node[anchor=north west]
{$0$};
\draw (-0.4,0.5) node[anchor=north west]
{$0$};
\draw (-0.4,1) node[anchor=north west]
{$1$};

\draw (0,1.5) node[anchor=north west]
{$0$};
\draw (0.5,1.5) node[anchor=north west]
{$1$};
\draw (1,1.5) node[anchor=north west]
{$1$};
\draw (1.5,1.5) node[anchor=north west]
{$1$};
\draw (2,1.5) node[anchor=north west]
{$1$};
\draw (2.5,1.5) node[anchor=north west]
{$1$};
\draw (3,1.5) node[anchor=north west]
{$1$};
\draw (3.5,1.5) node[anchor=north west]
{$1$};
\draw (4,1.5) node[anchor=north west]
{$1$};
\draw (4.5,1.5) node[anchor=north west]
{$0$};
\draw (5,1.5) node[anchor=north west]
{$1$};
\draw (5.5,1.5) node[anchor=north west]
{$1$};
\draw (6,1.5) node[anchor=north west]
{$1$};
\draw (6.5,1.5) node[anchor=north west]
{$1$};
\draw (7,1.5) node[anchor=north west]
{$1$};
\draw (7.5,1.5) node[anchor=north west]
{$0$};
\draw (8,1.5) node[anchor=north west]
{$1$};

\draw [line width=2.pt] (8.5,1.)-- (8,1);
\draw [line width=2.pt] (8,1)-- (8,0.5);
\draw [line width=2.pt] (8,0.5)-- (5.5,0.5);
\draw [line width=2.pt] (5.5,0.5)-- (5.5,-0);
\draw [line width=2.pt] (5.5,-0.)-- (1.5,-0);
\draw [line width=2.pt] (1.5,-0.)-- (1.5,-0.5);
\draw [line width=2.pt] (1.5,-0.5)-- (1.,-0.5);
\draw [line width=2.pt] (1,-0.5)-- (1,-4.5);
\draw [line width=2.pt] (1,-4.5)-- (0.5,-4.5);
\draw [line width=2.pt] (0.5,-4.5)-- (0.5,-7);
\draw [line width=2.pt] (0.5,-7)-- (0,-7);
\draw [line width=2.pt] (-0.,-7.5)-- (-0.,-7);
\end{tikzpicture}
\end{center}
In particular, the $3$-abaci of $\lambda$ and of ${\mathcal Cor}_3(\lambda)$ are depicted below:

 \begin{center}
\begin{tabular}{ccc}
\definecolor{sqsqsq}{rgb}{0.12549019607843137,0.12549019607843137,0.12549019607843137}
\begin{tikzpicture}[line cap=round,line join=round,>=triangle
45,x=1.0cm,y=1.0cm, scale=0.5,every node/.style={scale=0.5}]
\draw (-1.,7.5)-- (-1.,-4.3);
\draw (0.,7.5)-- (0.,-4.3);
\draw (1.,7.5)-- (1.,-4.3);
\draw [dash pattern=on 2pt off 2pt](-1.,4.)-- (-1.,5.);
\draw [dash pattern=on 2pt off 2pt](0.,4.)-- (0.,5.);
\draw [dash pattern=on 2pt off 2pt](1.,4.)-- (1.,5.);
\draw [dash pattern=on 2pt off 2pt](-1.,-1.)-- (-1.,-2.);
\draw [dash pattern=on 2pt off 2pt](0.,-1.)-- (0.,-2);
\draw [dash pattern=on 2pt off 2pt](-2.5,1.5)-- (2.5,1.5);
\draw (-1.2,-4.4) node[anchor=north west] {$0$};
\draw (-0.2,-4.4) node[anchor=north west] {$1$};
\draw (0.78,-4.4) node[anchor=north west] {$2$};
\draw (-1.5,2.130963636363638) node[anchor=north west] {$0$};
\draw (0.25,1.1127818181818203) node[anchor=north west] {$-1$};
\begin{scriptsize}
\draw [color=black] (-1.,4.)-- ++(-2.5pt,0 pt) -- ++(5.0pt,0 pt) ++(-2.5pt,-2.5pt) -- ++(0 pt,5.0pt);
\draw [color=black] (-1.,6.)-- ++(-2.5pt,0 pt) -- ++(5.0pt,0 pt) ++(-2.5pt,-2.5pt) -- ++(0 pt,5.0pt);
\draw [color=black] (-1.,3.)-- ++(-2.5pt,0 pt) -- ++(5.0pt,0 pt) ++(-2.5pt,-2.5pt) -- ++(0 pt,5.0pt);
\draw [fill=sqsqsq] (-1.,7.) circle (2.5pt);
\draw [fill=black] (-1.,5.) circle (2.5pt);
\draw [fill=black] (-1.,2.) circle (2.5pt);
\draw [fill=black] (-1.,1.) circle (2.5pt);
\draw [fill=black] (-1.,0.) circle (2.5pt);
\draw [fill=sqsqsq] (-1.,-1.) circle (2.5pt);
\draw [fill=black] (-1.,-2.) circle (2.5pt);
\draw [fill=black] (-1.,-3.) circle (2.5pt);
\draw [fill=black] (-1.,-4.) circle (2.5pt);

\draw [color=black] (0.,7.)-- ++(-2.5pt,0 pt) -- ++(5.0pt,0 pt) ++(-2.5pt,-2.5pt) -- ++(0 pt,5.0pt);
\draw [color=black] (0.,6.)-- ++(-2.5pt,0 pt) -- ++(5.0pt,0 pt) ++(-2.5pt,-2.5pt) -- ++(0 pt,5.0pt);
\draw [color=black] (0.,5.)-- ++(-2.5pt,0 pt) -- ++(5.0pt,0 pt) ++(-2.5pt,-2.5pt) -- ++(0 pt,5.0pt);
\draw [color=black] (0.,4.)-- ++(-2.5pt,0 pt) -- ++(5.0pt,0 pt) ++(-2.5pt,-2.5pt) -- ++(0 pt,5.0pt);
\draw [color=black] (0.,3.)-- ++(-2.5pt,0 pt) -- ++(5.0pt,0 pt) ++(-2.5pt,-2.5pt) -- ++(0 pt,5.0pt);
\draw [color=black] (0.,2.)-- ++(-2.5pt,0 pt) -- ++(5.0pt,0 pt) ++(-2.5pt,-2.5pt) -- ++(0 pt,5.0pt);
\draw [fill=black] (0.,1.) circle (2.5pt);
\draw [fill=black] (0.,0.) circle (2.5pt);
\draw [fill=black] (0.,-1.) circle (2.5pt);
\draw [fill=black] (0.,-2.) circle (2.5pt);
\draw [fill=black] (0.,-3.) circle (2.5pt);
\draw [fill=black] (0.,-4.) circle (2.5pt);

\draw [color=black] (1.,5.)-- ++(-2.5pt,0 pt) -- ++(5.0pt,0 pt) ++(-2.5pt,-2.5pt) -- ++(0 pt,5.0pt);
\draw [color=black] (1.,4.)-- ++(-2.5pt,0 pt) -- ++(5.0pt,0 pt) ++(-2.5pt,-2.5pt) -- ++(0 pt,5.0pt);
\draw [color=black] (1.,3.)-- ++(-2.5pt,0 pt) -- ++(5.0pt,0 pt) ++(-2.5pt,-2.5pt) -- ++(0 pt,5.0pt);
\draw [color=black] (1.,2.)-- ++(-2.5pt,0 pt) -- ++(5.0pt,0 pt) ++(-2.5pt,-2.5pt) -- ++(0 pt,5.0pt);
\draw [color=black] (1.,1)-- ++(-2.5pt,0 pt) -- ++(5.0pt,0 pt) ++(-2.5pt,-2.5pt) -- ++(0 pt,5.0pt);
\draw [color=black] (1.,0)-- ++(-2.5pt,0 pt) -- ++(5.0pt,0 pt) ++(-2.5pt,-2.5pt) -- ++(0 pt,5.0pt);

\draw [color=black] (1.,1.)-- ++(-2.5pt,0 pt) -- ++(5.0pt,0 pt) ++(-2.5pt,-2.5pt) -- ++(0 pt,5.0pt);
\draw [color=black] (1.,-2.)-- ++(-2.5pt,0 pt) -- ++(5.0pt,0 pt) ++(-2.5pt,-2.5pt) -- ++(0 pt,5.0pt);
\draw [color=black] (1.,-4.)-- ++(-2.5pt,0 pt) -- ++(5.0pt,0 pt) ++(-2.5pt,-2.5pt) -- ++(0 pt,5.0pt);
\draw [fill=black] (1.,0.) circle (2.5pt);
\draw [fill=black] (1.,-1.) circle (2.5pt);
\draw [fill=black] (1.,-3.) circle (2.5pt);
\end{scriptsize}
\end{tikzpicture}
&&\definecolor{sqsqsq}{rgb}{0.12549019607843137,0.12549019607843137,0.12549019607843137}
\begin{tikzpicture}[line cap=round,line join=round,>=triangle
45,x=1.0cm,y=1.0cm, scale=0.5,every node/.style={scale=0.5}]
\draw (-1.,7.5)-- (-1.,-4.3);
\draw (0.,7.5)-- (0.,-4.3);
\draw (1.,7.5)-- (1.,-4.3);
\draw [dash pattern=on 2pt off 2pt](-1.,4.)-- (-1.,5.);
\draw [dash pattern=on 2pt off 2pt](0.,4.)-- (0.,5.);
\draw [dash pattern=on 2pt off 2pt](1.,4.)-- (1.,5.);
\draw [dash pattern=on 2pt off 2pt](-1.,-1.)-- (-1.,-2.);
\draw [dash pattern=on 2pt off 2pt](0.,-1.)-- (0.,-2);
\draw [dash pattern=on 2pt off 2pt](-2.5,1.5)-- (2.5,1.5);
\draw (-1.2,-4.4) node[anchor=north west] {$0$};
\draw (-0.2,-4.4) node[anchor=north west] {$1$};
\draw (0.78,-4.4) node[anchor=north west] {$2$};
\draw (-1.5,2.130963636363638) node[anchor=north west] {$0$};
\draw (0.25,1.1127818181818203) node[anchor=north west] {$-1$};
\begin{scriptsize}
\draw [color=black] (-1.,5.)-- ++(-2.5pt,0 pt) -- ++(5.0pt,0 pt) ++(-2.5pt,-2.5pt) -- ++(0 pt,5.0pt);
\draw [color=black] (-1.,6.)-- ++(-2.5pt,0 pt) -- ++(5.0pt,0 pt) ++(-2.5pt,-2.5pt) -- ++(0 pt,5.0pt);
\draw [color=black] (-1.,7.)-- ++(-2.5pt,0 pt) -- ++(5.0pt,0 pt) ++(-2.5pt,-2.5pt) -- ++(0 pt,5.0pt);
\draw [fill=sqsqsq] (-1.,3.) circle (2.5pt);
\draw [fill=black] (-1.,4.) circle (2.5pt);
\draw [fill=black] (-1.,2.) circle (2.5pt);
\draw [fill=black] (-1.,1.) circle (2.5pt);
\draw [fill=black] (-1.,0.) circle (2.5pt);
\draw [fill=sqsqsq] (-1.,-1.) circle (2.5pt);
\draw [fill=black] (-1.,-2.) circle (2.5pt);
\draw [fill=black] (-1.,-3.) circle (2.5pt);
\draw [fill=black] (-1.,-4.) circle (2.5pt);

\draw [color=black] (0.,7.)-- ++(-2.5pt,0 pt) -- ++(5.0pt,0 pt) ++(-2.5pt,-2.5pt) -- ++(0 pt,5.0pt);
\draw [color=black] (0.,6.)-- ++(-2.5pt,0 pt) -- ++(5.0pt,0 pt) ++(-2.5pt,-2.5pt) -- ++(0 pt,5.0pt);
\draw [color=black] (0.,5.)-- ++(-2.5pt,0 pt) -- ++(5.0pt,0 pt) ++(-2.5pt,-2.5pt) -- ++(0 pt,5.0pt);
\draw [color=black] (0.,4.)-- ++(-2.5pt,0 pt) -- ++(5.0pt,0 pt) ++(-2.5pt,-2.5pt) -- ++(0 pt,5.0pt);
\draw [color=black] (0.,3.)-- ++(-2.5pt,0 pt) -- ++(5.0pt,0 pt) ++(-2.5pt,-2.5pt) -- ++(0 pt,5.0pt);
\draw [color=black] (0.,2.)-- ++(-2.5pt,0 pt) -- ++(5.0pt,0 pt) ++(-2.5pt,-2.5pt) -- ++(0 pt,5.0pt);
\draw [fill=black] (0.,1.) circle (2.5pt);
\draw [fill=black] (0.,0.) circle (2.5pt);
\draw [fill=black] (0.,-1.) circle (2.5pt);
\draw [fill=black] (0.,-2.) circle (2.5pt);
\draw [fill=black] (0.,-3.) circle (2.5pt);
\draw [fill=black] (0.,-4.) circle (2.5pt);

\draw [color=black] (1.,5.)-- ++(-2.5pt,0 pt) -- ++(5.0pt,0 pt) ++(-2.5pt,-2.5pt) -- ++(0 pt,5.0pt);
\draw [color=black] (1.,4.)-- ++(-2.5pt,0 pt) -- ++(5.0pt,0 pt) ++(-2.5pt,-2.5pt) -- ++(0 pt,5.0pt);
\draw [color=black] (1.,3.)-- ++(-2.5pt,0 pt) -- ++(5.0pt,0 pt) ++(-2.5pt,-2.5pt) -- ++(0 pt,5.0pt);
\draw [color=black] (1.,2.)-- ++(-2.5pt,0 pt) -- ++(5.0pt,0 pt) ++(-2.5pt,-2.5pt) -- ++(0 pt,5.0pt);
\draw [color=black] (1.,1)-- ++(-2.5pt,0 pt) -- ++(5.0pt,0 pt) ++(-2.5pt,-2.5pt) -- ++(0 pt,5.0pt);
\draw [color=black] (1.,0)-- ++(-2.5pt,0 pt) -- ++(5.0pt,0 pt) ++(-2.5pt,-2.5pt) -- ++(0 pt,5.0pt);

\draw [color=black] (1.,1.)-- ++(-2.5pt,0 pt) -- ++(5.0pt,0 pt) ++(-2.5pt,-2.5pt) -- ++(0 pt,5.0pt);
\draw [color=black] (1.,-0.)-- ++(-2.5pt,0 pt) -- ++(5.0pt,0 pt) ++(-2.5pt,-2.5pt) -- ++(0 pt,5.0pt);
\draw [color=black] (1.,-1.)-- ++(-2.5pt,0 pt) -- ++(5.0pt,0 pt) ++(-2.5pt,-2.5pt) -- ++(0 pt,5.0pt);
\draw [fill=black] (1.,-2.) circle (2.5pt);
\draw [fill=black] (1.,-4.) circle (2.5pt);
\draw [fill=black] (1.,-3.) circle (2.5pt);
\end{scriptsize}
\end{tikzpicture}
\\
$\lambda$&&${\mathcal Cor}_3(\lambda)$
\end{tabular}
\end{center}

We can obtain {${\mathcal Cor}_{3}(\lambda)=(7,5,3,2^2,1^2)$} from
$\lambda$ by pushing down beads and reading off the {resulting} bead positions. We have $\delta_0=3$ and
$\delta_1=0$, and $\lambda_{(3)}$ has three diagonal hooks. Now consider the partition ${\mathcal Q}_3(\lambda)$ of Example~\ref{ex:ex1}. More
precisely, by the previous discussion, ${\mathfrak D}(\lambda)$ can be obtained from 
the $3$-abacus of {${\mathcal Q}_3(\lambda)$} and the $\Delta_{\gamma}$. 
The $3$-abacus of $\lambda$ is obtained by shifting up
the runner 0 of {${\mathcal Q}_3(\lambda)$}
$\delta_0$ positions and by shifting down
 the runner 2 $-\delta_0$ positions. 

\begin{center}
\begin{tabular}{ccc}
\definecolor{sqsqsq}{rgb}{0.12549019607843137,0.12549019607843137,0.12549019607843137}
\begin{tikzpicture}[line cap=round,line join=round,>=triangle
45,x=1.0cm,y=1.0cm, scale=0.7,every node/.style={scale=0.7}]
\draw (-1.,4.)-- (-1.,-2.3);
\draw (0.,4.)-- (0.,-2.3);
\draw (1.,4.)-- (1.,-2.3);

\draw [line width=1.pt](-1.3,1.2)--(-.7,1.2);
\draw [line width=1.pt](-1.3,-1.2)--(-.7,-1.2);
\draw [line width=1.pt](-1.3,1.2)--(-1.3,-1.2);
\draw [line width=1.pt](-0.7,1.2)--(-.7,-1.2);

\draw [line width=1.pt,-latex](-1.6,-1)--(-1.6,3); 
\draw (-1.7,2.8) node[anchor=north east] {$+\delta_{0}$};
\draw [line width=1.pt,latex-](1.6,-1) --(1.6,3);
\draw (1.6,-0.3)  node[anchor=north
west] {$-\delta_{0}$}; 

\draw [dash pattern=on 2pt off 2pt](-1.,4.)-- (-1.,5.);
\draw [dash pattern=on 2pt off 2pt](0.,4.)-- (0.,5.);
\draw [dash pattern=on 2pt off 2pt](1.,4.)-- (1.,5.);
\draw [dash pattern=on 2pt off 2pt](-1.,-1.)-- (-1.,-2.);
\draw [dash pattern=on 2pt off 2pt](0.,-1.)-- (0.,-2);
\draw [dash pattern=on 2pt off 2pt](-2.5,1.5)-- (2.5,1.5);
\draw (-1.2,-2.4) node[anchor=north west] {$0$};
\draw (-0.2,-2.4) node[anchor=north west] {$1$};
\draw (0.78,-2.4) node[anchor=north west] {$2$};
\draw (-1.5,2.130963636363638) node[anchor=north west] {$0$};
\draw (0.25,1.1127818181818203) node[anchor=north west] {$-1$};
\begin{scriptsize}
\draw [color=black] (-1.,1.)-- ++(-2.5pt,0 pt) -- ++(5.0pt,0 pt) ++(-2.5pt,-2.5pt) -- ++(0 pt,5.0pt);
\draw [color=black] (-1.,3.)-- ++(-2.5pt,0 pt) -- ++(5.0pt,0 pt) ++(-2.5pt,-2.5pt) -- ++(0 pt,5.0pt);
\draw [color=black] (-1.,0.)-- ++(-2.5pt,0 pt) -- ++(5.0pt,0 pt) ++(-2.5pt,-2.5pt) -- ++(0 pt,5.0pt);
\draw [fill=sqsqsq] (-1.,4.) circle (2.5pt);
\draw [fill=black] (-1.,2.) circle (2.5pt);
\draw [fill=black] (-1.,-1.) circle (2.5pt);
\draw [fill=black] (-1.,-2.) circle (2.5pt);
\draw [color=black] (0.,4.)-- ++(-2.5pt,0 pt) -- ++(5.0pt,0 pt) ++(-2.5pt,-2.5pt) -- ++(0 pt,5.0pt);
\draw [color=black] (0.,3.)-- ++(-2.5pt,0 pt) -- ++(5.0pt,0 pt) ++(-2.5pt,-2.5pt) -- ++(0 pt,5.0pt);
\draw [color=black] (0.,2.)-- ++(-2.5pt,0 pt) -- ++(5.0pt,0 pt) ++(-2.5pt,-2.5pt) -- ++(0 pt,5.0pt);
\draw [fill=black] (0.,1.) circle (2.5pt);
\draw [fill=black] (0.,0.) circle (2.5pt);
\draw [fill=black] (0.,-1.) circle (2.5pt);
\draw [fill=black] (0.,-2.) circle (2.5pt);
\draw [color=black] (1.,4.)-- ++(-2.5pt,0 pt) -- ++(5.0pt,0 pt) ++(-2.5pt,-2.5pt) -- ++(0 pt,5.0pt);
\draw [color=black] (1.,1.)-- ++(-2.5pt,0 pt) -- ++(5.0pt,0 pt) ++(-2.5pt,-2.5pt) -- ++(0 pt,5.0pt);
\draw [color=black] (1.,-1.)-- ++(-2.5pt,0 pt) -- ++(5.0pt,0 pt) ++(-2.5pt,-2.5pt) -- ++(0 pt,5.0pt);
\draw [fill=black] (1.,3.) circle (2.5pt);
\draw [fill=black] (1.,2.) circle (2.5pt);
\draw [fill=black] (1.,0.) circle (2.5pt);
\draw [fill=black] (1.,-2.) circle (2.5pt);
\end{scriptsize}
\end{tikzpicture}
&&
\end{tabular}

\end{center}
{Consider runner $0$ of {${\mathcal Q}_3(\lambda)$}. Since $\delta_0=3$, one
shifts it up three positions to obtain the 0-runner of $\lambda$. However
(here we abuse notation) this causes $\mathcal X_{0}\cup\mathcal
Y_{0}(\lambda)$, to be altered from $\mathcal X'_{0}\cup\mathcal
Y'_{0}(\lambda)$, and hence the number of diagonal hooks of $\lambda$ arising
from runner $0$ {is different the number} of diagonal hooks of
{${\mathcal Q}_3(\lambda)$} arising from its runner $0$. In particular, the
diagonal hooks in {${\mathcal Q}_3(\lambda)$} corresponding to
positions $2$ and
$5$ on runner $2$ ``disappear'' for $\lambda$ as they shift to new
positions $-1$
and $-4$, while the bead in position $-9$ on the $3$-abacus of ${\mathcal
Q}_p(\lambda)$ introduces a new diagonal for $\lambda$ as it shifts up to
position $0$.}
 
\end{example}

Recall that the Durfee square of $\lambda$ is the
largest square that can be accommodated inside the Young diagram of
$\lambda$ (see for example \cite[\S2.3]{Andrews}). Let $\lambda^{\Box}$ be the size of
the Durfee square of $\lambda$, otherwise known as the Durfee number of $\lambda$. Let ${\mathcal
Y}_{\gamma}^1=\{-\delta_{\gamma}\geq x\geq -1\mid x\not\in\mathcal{Y}_{\gamma}\}$ and ${\mathcal
Y}_{\gamma}^0=\{-\delta_{\gamma}\geq x\geq -1 \mid x\in\mathcal{Y}_{\gamma}\}$, and $\mathcal{Y}^{\Box}_{\gamma}=|{\mathcal
Y}_{\gamma}^1|-|{\mathcal Y}_{\gamma}^0|.$ Then steps (i) through (iv) in this section describe how to calculate the size of the Durfee square
of a symmetric partition from the Durfee squares of its $p$-quotient and its $p$-core.
{\begin{lemma} \label{rk:durfee}
With the above notation, we have
 \[ \lambda^{\Box}=\sum_{\lambda_{\gamma}\in {\mathcal
Quo}_p(\lambda)}{\lambda^{\Box}_{\gamma}}+\sum_{\delta_{\gamma}>0}\mathcal{Y}^{\Box}_{\gamma}.
\] 
\end{lemma}
\begin{proof} We can rewrite the equation in the statement of the theorem as follows:
\[ \lambda^{\Box}=\\\sum_{\lambda_{\gamma}, \delta_{\gamma}>0}({\lambda^{\Box}_{\gamma}}+\mathcal{Y}^{\Box}_{\gamma})+\sum_{\lambda_{\gamma},\delta_{\gamma}=0}{\lambda^{\Box}_{\gamma}}.
\] 
The second of the two sums counts the contribution to the Durfee number
from the runners that are not affected by the introduction of a core. The
first of the two sums calculates the original contribution to the Durfee
number from the runners on which the core appears, and then corrects it
using $\mathcal{Y}^{\Box}_{\gamma}$ for each $\delta_{\gamma}>0$. In
particular, $\mathcal{Y}^{\Box}_{\gamma}$ subtracts the disappearances of
existing hooks with respect to $\gamma^*=p-\gamma-1$ from the appearances
of a new hooks with respect to $\gamma$.
\end{proof}
The following two corollaries are immediate.
\begin{corollary}
If $\lambda$ is a $p$-core, that is $\lambda={\mathcal
Cor}_{p}(\lambda)$, then $\lambda^{\Box}=\sum_{\delta_{\gamma}>0}
\delta_{\gamma}$.
\end{corollary}
\begin{proof} In this case the Durfee number is calculated directly from the the $p$-core.
\end{proof}
\begin{corollary}
If $\lambda$ has empty $p$-core, that is,
$\lambda={\mathcal Q}_{p}(\lambda)$, then
$$\lambda^{\Box}=\sum_{\lambda_{\gamma}\in {\mathcal
Quo}_p(\lambda)}{\lambda^{\Box}_{\gamma}}.$$ 
\end{corollary}}
\begin{proof} In the case the $p$-core contributes nothing, no diagonal hooks appear, non disappear, and the Durfee number of $\lambda$ is the the sum of the Durfee numbers of the quotient.
\end{proof}
\subsection{The sign of the product of the diagonal hooks.}

\begin{theorem}
\label{prop:legendre}
Let $w$ and $r$ be non-negative integers, and set $n=pw+r$. 
Let $\lambda=\lambda^*$ {be a partition} of $n$ such that
$|{\mathcal Cor}_p(\lambda)|=r$ and ${\mathcal Quo}_p(\lambda)\in
\mathcal{MP}(p,w)$, where $\mathcal{MP}(p,w)$ is the set of
$p$-multipartitions of $w$.
Assume that $\lambda_{(p-1)/2}=\emptyset$.
Set $$d=\prod_{h\in {\mathfrak d}(\lambda)}h\,,\quad q=\prod_{h\in
{\mathfrak d}({\mathcal Q}_p(\lambda))}h\quad\text{and}\quad c=\prod_{h\in
{\mathfrak d}({\mathcal Cor}_p(\lambda))}h.
$$
Then
$$
\left(\frac{p}d\right)=
\left(\frac{p}q\right)
\left(\frac{p}c\right).$$
Furthermore, if $b=\sum_{\gamma}|\mathcal B_{\gamma}|$, then
$$d\equiv qc(-1)^b\mod 4,$$
where $\mathcal B_{\gamma}$ is the set defined in Remark~\ref{rk:diagcore}.
\end{theorem}

\begin{proof}
Recall from~\S\ref{subsec:nontriv} that ${\mathfrak D}(\lambda)$ is labeled by
$\mathcal X'_{\gamma}$ and $\mathcal Y'_{\gamma}$ where $\gamma\in\Gamma$.
We choose the representative $\gamma\in\Gamma$ such that
$\delta_{\gamma}\geq 0$. We also recall that ${\mathfrak D}({\mathcal Quo}_p(\lambda))$ is labeled by
$\mathcal X_{\gamma}\cup Y_{\gamma}$ and ${\mathfrak D}({\mathcal Cor}_{p}(\lambda))$ by ${\Delta}_{\gamma}$
for $\gamma\in\Gamma$.  
Furthermore, for $\gamma\in\Gamma$, if $\delta_{\gamma}=0$, then
$\mathcal X'_{\gamma}=\mathcal X_{\gamma}$,
$\mathcal{Y}'_{\gamma}=\mathcal Y'_{\gamma}$ and ${\Delta}_{\gamma}=\emptyset$.
Otherwise, if $\delta_{\gamma}>0$, then with the 
notation~(\ref{eq:imx0}),~(\ref{eq:imx1}) and~(\ref{eq:paramdiagcore})
{$$\mathcal X'_{\gamma}=\{c(d_x)\mid x\in\mathcal X_{\gamma}\}\cup
\{c_x\mid x\in\mathcal
A_{\gamma}\}\ \text{and}\ \mathcal Y'_{\gamma}=\mathcal
\{c(d_x)\mid x\in \mathcal Y_{\gamma}\ \text{such that }
x<-\delta_{\gamma}\}.$$}
Write $$M=\prod_{\delta_{\gamma}>0}\prod_{x\in \mathcal
X'_{\gamma}\cup\mathcal Y'_{\gamma}}\left(\frac{p}{d'_x}\right),$$
where $d'_x$ is the diagonal {hook-length} of $\lambda$ corresponding to $x$. We
remark that
\begin{align*}
\left(\frac{p}d\right)&=M \prod_{\delta_{\gamma}=0}\prod_{x\in\mathcal
X_{\gamma}\cup Y_{\gamma}}\left(\frac{p}{d_x}\right).
\end{align*}
But for any $c(d_x)\in\mathcal Y'_{\gamma}$, there is $c(d_{x^*})\in
\mathcal X_{\gamma}$, where $d_x$ and $d_{x^*}$ are diagonal hook lengths of
$\lambda$ as in~(\ref{eq:dx}). 
Furthermore, by~(\ref{eq:brother}),~(\ref{eq:imx0}) and (\ref{eq:imx1}) we have 
\begin{equation}
\label{eq:ca0}
c(d_x)+c(d_x^*)= 2pw_{x,x^*}.
\end{equation}
It follows that 
\begin{equation}
\label{eq:ca1}
c(d_x)c(d_{x^*})\equiv
2c(d_x)pw_{x,x^*}-1\equiv 2pw_{x,x^*}-1\equiv d_xd_{x^*}\mod 4.
\end{equation}
Hence, 
\begin{align}
\label{eq:ca2}
\left(\frac{p}{c(d_x)
c(d_{x^*})}\right)&=(-1)^{\frac{(p-1)(c(d_x)c(d_{x^*})-1)}4} 
\left(\frac{c(d_x) c(d_{x^*})}p\right)\\
\nonumber &=(-1)^{\frac{(p-1)(d_x d_{x^*}-1)}4} 
\left(\frac{-1}p\right)\\
\nonumber &=
\left(\frac{p}{d_x
d_{x^*}}\right).
\end{align}
\medskip

On the other hand, if $x\in \mathcal
Y_{\gamma}$ {is} such that $-\delta_{\gamma}\leq x\leq -1$, that is
$x\in\mathcal B_{\gamma}$, then there is a
diagonal {hook} of $\mu$ of {length} $c(d_{x^*})$ with $x^*\in\mathcal
X_{\gamma}$. {So}
\begin{align*}
M&=\prod_{\delta_{\gamma}>0}\left(\prod_{x\in \mathcal Y_{\gamma}}
\left(\frac{p}{c(d_x)
c(d_{x^*})}\right)\prod_{x\in\mathcal B_{\gamma}}
\left(\frac{p}{
c(d_{x^*})}\right)\prod_{x\in\mathcal A_{\gamma}}
\left(\frac{p}{
c_x}\right)
\right)\\
&=\prod_{\delta_{\gamma}>0}\left(\prod_{x\in \mathcal Y_{\gamma}}
\left(\frac{p}{d_x
d_{x^*}}\right)\prod_{x\in\mathcal B_{\gamma}}
\left(\frac{p}{
c(d_{x^*})}\right)\prod_{x\in\mathcal A_{\gamma}}
\left(\frac{p}{
c_x}\right)
\right).
\end{align*}

By Remark~\ref{rk:diagcore}, recall that {${\mathfrak d}({\mathcal
Cor}_p(\lambda))=\{c_x\mid x\in \mathcal
A_{\gamma}\cup\mathcal B_{\gamma}\}$}, where $c_x$ is given
in~(\ref{eq:paramdiagcore}). Then
$$
\left(\frac{p}d\right)=\left(\frac{p}q\right)\left(\frac{p}c\right)
\prod_{\delta_{\gamma}>0}
\prod_{x\in\mathcal B_{\gamma}}
\left(\frac{p}{
d_xd_{x^*}c(d_{x^*})c_x}\right).
$$
Let $\gamma$ be such that $\delta_{\gamma}>0$ and $x\in\mathcal
B_{\gamma}$. By~(\ref{eq:imx0}) and~(\ref{eq:paramdiagcore}), we have
$c_x\equiv c(d_{x^*})\mod p$. Moreover
\begin{align}
\label{eq:ca3}
c_xc(d_{x^*})&\equiv
1+2((\delta_{\gamma}+x)p+1)+2((\delta_{\gamma}+x^*)p+1)\mod 4\\
\nonumber&\equiv 1+2x+2x^*\mod 4\\
\nonumber&\equiv 1+2(x+x^*)\mod 4\\
\nonumber&=1+2(x-x^*)\mod 4\\
\nonumber&=1+2w_{x,x^*}\mod 4.
\end{align}
Hence
$$\frac{c_xc(d_{x^*})-1}2\equiv w_{x,x^*}\mod 2,$$
and we obtain that
$$\left(\frac{p}{
c(d_{x^*})c_x}\right)=(-1)^{\frac{(p-1)w_{x,x^*}}2}
\left(\frac{
c(d_{x^*})c_x}p\right)=(-1)^{\frac{(p-1)w_{x,x^*}}2}.
$$
However, the computation~(\ref{eq:calculsymbol}) shows that 
$\left(\frac{p}{
d_{x}d_{x^*}}\right)=(-1)^{\frac{(p-1)w_{x,x^*}}2}$, and
$$
\left(\frac{p}{
d_xd_{x^*}c(d_{x^*})c_x}\right)=(-1)^{\frac{(p-1)w_{x,x^*}}2}\cdot
(-1)^{\frac{(p-1)w_{x,x^*}}2}=1.$$
The result follows.

We now prove the second part of the statement. Since an odd number is
its own inverse modulo
$4$, we do the same computation as above and obtain that
$$d\equiv qc\prod_{\delta_{\gamma}>0}
\prod_{x\in\mathcal B_{\gamma}}
d_xd_{x^*}c(d_{x^*})c_x\mod 4.
$$
But by~(\ref{eq:ca1}) and~(\ref{eq:ca3}), we have
$$d_xd_{x^*}c(d_{x^*})c_x\equiv (2w_{x,x'}-1)(2w_{x,x'}+1)\equiv
4w_{x,x^*}^2-1\equiv -1\mod 4.$$
Thus, $d\equiv qc(-1)^b\mod 4$.
\end{proof}

\begin{theorem}
\label{prop:irrcorequotient}
Let $\lambda=\lambda^*$.Then for any $f\in\mathcal H_{n!/2}$,
$$\varepsilon(\chi_{\lambda},f)=
\varepsilon(\chi_{{\mathcal Q}_p(\lambda)},f)\varepsilon(\chi_{{\mathcal Cor}_p(\lambda)},f).$$
\end{theorem}

\begin{proof}
Write $n=pw+r$, where $r=|{\mathcal Cor}_p(\lambda)|$.

First, we assume that $\lambda_{(p-1)/2}=\emptyset$. We define
$\lambda$, $d$, $q$ and $c$ as in Theorem~\ref{prop:legendre}. Since
$\lambda_{(p-1)/2}=\emptyset$, we have
{$\varepsilon(\chi_{\lambda},f)=\varepsilon(\chi_{{\mathcal
Q}_p(\lambda)},f)=\varepsilon(\chi_{{\mathcal Cor}_p(\lambda)},f)=1$}
for all $f\in\mathcal K_{n!/2}$.
We consider the case $f=\sigma_{n!/2}$. In the proof
of~\ref{prop:bougereg}, we see that
$\varepsilon(\chi_{\lambda},\sigma_{n!/2})=\left(\frac{p}{
q}\right)$.
To simplify the notation, set $m=d_{{\mathcal Cor}_p(\lambda)}$. 
By Theorem~\ref{prop:legendre}, $d\equiv (-1)^bqc\mod 4$. 
Furthermore, 
\begin{align*}
d_{\mu}+d_{\lambda}&=\sum_{\gamma\in\Gamma}(|\mathcal
X_{\gamma}|+|\mathcal X_{\gamma}'|+|\mathcal Y_{\gamma}|+|\mathcal
Y'_{\gamma}|)\\
&=\sum_{\gamma\in\Gamma}(2|\mathcal X_{\gamma}|+|\mathcal
A_{\gamma}|+2|\mathcal Y_{\gamma}|-|\mathcal B_{\gamma}|)\\
&=2\sum_{\gamma\in\Gamma}(\underbrace{|\mathcal X_{\gamma})|+|\mathcal
Y_{\gamma}|}_{\text{even}})+m-2b\\
&\equiv m+2b\mod 4.
\end{align*} 
Now, we derive from the proof of Proposition~\ref{prop:bougereg}
$(-1)^{\frac{n-r+d_{\lambda}}{2}}=(-1)^{\frac{n-r-d_{\lambda}}{2}}=
(-1)^{\frac{pw-d_{\lambda}}{2}}=(-1)^{\frac{q-1}{2}}$. In particular,
$n-r+d_{\lambda}\equiv q-1\mod 4$. Thus,
\begin{align*}
n-d_{\mu}+d-1-r+m-c+1&=
n-r+m-d_{\mu}+d-c\\
&\equiv n-r+d_{\lambda}+2b+qc(-1)^b-c\mod 4\\
&\equiv q-1+2b+qc(-1)^b-c\mod 4.
\end{align*}  
If $b$ is even, then $$n-d_{\mu}+d-1-r+m-c+1\equiv
q-1+qc-c\equiv(q-1)(c+1)\equiv 0\mod 4,$$ because $q$ and $c$ are odd.
If $b$ is odd, then 
\begin{align*}
n-d_{\mu}+d-1-r+m-c+1&\equiv
q-1 +2-qc-c\mod 4\\
&\equiv (q+1)(1-c)\mod 4\\
&\equiv 0\mod 4.
\end{align*}
Finally, using Propositions~\ref{prop:bougereg} and~\ref{prop:racbouge},
and~(\ref{eq:bougealt}), we obtain
\begin{align*}
\varepsilon(\chi_{\lambda},\sigma_{n!/2})&=(-1)^{\frac{p-1}4(n-d_{\mu}+d-1)}
\left(\frac{p}{
d}\right)\\
&=(-1)^{\frac{p-1}4(n-d_{\mu}+d-1)}\left(\frac{p}{
q}\right)\left(\frac{p}{
c}\right)\\
&=(-1)^{\frac{p-1}4(n-d_{\mu}+d-1-r+m-c+1)}\varepsilon(\chi_{{\mathcal Q}_p(\lambda)},\sigma_{n!/2})
\varepsilon(\chi_{{\mathcal Cor}_p(\lambda)},\sigma_{n!/2})\\
&=\varepsilon(\chi_{{\mathcal Q}_p(\lambda)},\sigma_{n!/2})
\varepsilon(\chi_{{\mathcal Cor}_p(\lambda)},\sigma_{n!/2}).
\end{align*}

Assume now that $\lambda_{(p-1)/2}$ is non-empty. Since
$\delta_{(p-1)/2}=\emptyset$, we have $\mathcal X'_{(p-1)/2}=\mathcal
X_{(p-1)/2}$, that is, the diagonal hooks arising from the
$(p-1)/2$-runner of $\lambda$ and ${\mathcal Q}_p(\lambda)$ are the
same. Denote by $\lambda^{\vee}$ 
the partition with same {$p$-core} and $p$-quotient as $\lambda$ except
$\lambda^{\vee}_{(p-1)/2}=\emptyset$.
Then
\begin{align*}
\varepsilon(\chi_{\lambda},f)&=\varepsilon(i,f)^{\frac{p|\lambda_{(p-1)/2}|-|\mathcal
X_{(p-1)/2}|+d_{(p-1)/2}-1}2}\varepsilon(\sqrt{d_{(p-1)/2}},f)\varepsilon(\chi_{{\lambda^{\vee}}},f)\\
&=\varepsilon(i,f)^{\frac{p|\lambda_{(p-1)/2}|-|\mathcal
X_{(p-1)/2}|+d_{(p-1)/2}-1}2}\varepsilon(\sqrt{d_{(p-1)/2}},f)\varepsilon(\chi_{{\mathcal{Q}_p(\lambda)^{\vee}}},f)\varepsilon(\chi_{{\mathcal Cor}_p(\lambda)},f)\\
&=\varepsilon(\chi_{{\mathcal Q}_p(\lambda)},f)\varepsilon(\chi_{({\mathcal Cor}_p(\lambda)},f),
\end{align*}
where $d_{(p-1)/2}$ is the product of the diagonal hook lengths
arising
from the runner $(p-1)/2$.
\end{proof}
\section{Verification of Navarro's conjecture for the alternating groups}
\label{sec:preuve}

We will now prove Theorem~\ref{thm:main}. 
Let $n$ be a positive integer with $p$-adic expansion
$n=n_0+pn_1+\cdots+n_sp^s$.
Let $\lambda$ be a partition 
of $n$ with $p$-core tower
{${\mathcal Cor}^{(k)}_p(\lambda)=\{\lambda_{\underline j}\mid \underline j\in
I^k\}$} for $k\geq 0$ such that $c_k(\lambda)=\sum_{\underline j\in
I^k}|\lambda_{\underline j}|$.
We then associate to $\lambda$ the irreducible character of
$\operatorname{N}_{\sym_n}(P)$ 
$$\psi_{\lambda}=\prod_{k\geq 0}\psi_{\underline\lambda,k},$$
as above, where $\psi_{\underline\lambda,k}\in \Irr(N_k)$ as in~(\ref{eq:psiNk}).
If $\lambda$ is not symmetric, then $\chi_{\lambda}$ and
{$\psi_{\underline\lambda}$} {restrict} irreducibly to $\Alt_n$ and
$\operatorname{N}_{\Alt_n}(P)$. As above, we denote the restriction by
the same symbol.
If $\lambda$ is symmetric, then the restriction of $\chi_{\lambda}$ to
$\Alt_n$ has two
irreducible {constituents} $\chi_{\lambda}^+$ and $\chi_{\lambda}^-$.
Similarly for {$\psi_{\lambda}$}. More precisely, 
for any $k\geq 0$ and 
$\underline\lambda\in\mathcal{MP}(p^k,n_k)$, we {have}
$\underline\lambda^*=\underline\lambda$ and the restriction
of $\psi_{\underline\lambda,k}$ to $(Y_k\wr\sym_{n_k})^+$
splits into two irreducible characters $\psi_{\underline\lambda,k}^+$
and $\psi_{\underline\lambda,k}^-$. Then following \S\ref{subsec:redpb},
{we label $\psi_{\lambda}^{+}$ such that 
$\prod_k\psi_{\underline\lambda,k}^+$ is a constituent of
$\Res_{\prod(Y^k\wr\sym_{n_k})^+}(\psi_{\underline\lambda}^+)$.
In particular, $\prod_k\psi_{\underline\lambda,k}^-$ is a constituent of
$\Res_{\prod(Y^k\wr\sym_{n_k})^+}(\psi_{\underline\lambda}^-)$}.
Now, define
$\Phi:\Irr_{p'}(\Alt_n)\rightarrow\Irr_{p'}(\operatorname{N}_{\Alt_n}(P))$
by setting
\begin{equation}
\label{eq:defphi}
\Phi(\chi_{\lambda})={\psi_{\lambda}}\text{ if
}\lambda\neq\lambda^*,\quad \text{and}\quad
\Phi(\chi_{\lambda}^{\pm})={\psi_{\lambda}^{\pm}}\ \text{otherwise}.
\end{equation}

We need the following two lemmas.
\begin{lemma}
\label{lemma:regcase}
If $\lambda$ is a regular partition of $n$ and $f\in\mathcal H_{n!/2}$,
then
$$\varepsilon(\chi_{\lambda},f)=\varepsilon({\psi_{\lambda}},f).$$ 
\end{lemma}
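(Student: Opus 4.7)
The plan is to exploit the direct product decomposition of the normalizer quotient and invoke Propositions~\ref{prop:bougereg} and~\ref{prop:signeregular} case by case, using the decomposition $\mathcal H_{n!/2} = \mathcal K_{n!/2} \times \langle \sigma_{n!/2} \rangle$.

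By definition a regular partition $\lambda$ is symmetric with ${\mathcal Cor}_p(\lambda)=\emptyset$, hence $n_0=0$, so (\ref{eq:pprimenormalizer}) yields $\operatorname{N}_{\sym_n}(P)/P' \simeq \prod_{k \ge 1} N_k$ and $\psi_\lambda = \prod_{k \ge 1} \psi_{\underline\lambda, k}$. The symmetry $\lambda=\lambda^*$ propagates to each level so that $\underline\lambda_k = \underline\lambda_k^*$, and combined with $\lambda_{\underline p^*, k} = \emptyset$, Lemma~\ref{lemma:starlocal} shows each $\psi_{\underline\lambda, k}$ is $\sgn$-stable. By~(\ref{eq:fixdirect}) the product $\psi_\lambda$ is $\sgn$-stable, and Proposition~\ref{prop:reducbouge} applied to the direct product $\prod_{k\ge 1} N_k$ (each $N_k^+ \neq N_k$ since $N_k$ contains odd permutations) yields
$$\varepsilon(\psi_\lambda, f) = \prod_{k \ge 1} \varepsilon(\psi_{\underline\lambda, k}, f).$$

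For $f \in \mathcal K_{n!/2}$, Propositions~\ref{prop:bougereg} and~\ref{prop:signeregular} immediately give $\varepsilon(\chi_\lambda, f) = 1 = \varepsilon(\psi_{\underline\lambda, k}, f)$, so both sides equal $1$. The only genuine obstacle will be the case $f = \sigma_{n!/2}$: the two propositions provide $\varepsilon(\chi_\lambda, \sigma_{n!/2}) = (-1)^{(p-1)n/4}$ and $\varepsilon(\psi_\lambda, \sigma_{n!/2}) = (-1)^{(p-1)(\sum_k n_k)/4}$, so one must verify that $(p-1)(n - \sum_{k \ge 1} n_k)/4$ is even. Writing $n - \sum_{k \ge 1} n_k = (p-1)\sum_{k \ge 1} n_k(1+p+\cdots+p^{k-1})$ and using that each $n_k$ is even (as $c_{\underline j}=c_{\underline j^*}$ in the symmetric regular case), this exponent equals $\tfrac{(p-1)^2}{2}\sum_{k \ge 1}\tfrac{n_k}{2}(1+p+\cdots+p^{k-1})$, which is even since $(p-1)^2/2 = 2((p-1)/2)^2$ is even for $p$ odd. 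Combining both cases then yields the lemma.
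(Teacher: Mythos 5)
Your proof is correct and follows essentially the same route as the paper: split $\varepsilon(\psi_{\lambda},f)$ over the factors $N_k$ via Proposition~\ref{prop:reducbouge}, evaluate each factor with Proposition~\ref{prop:signeregular}, compare with Proposition~\ref{prop:bougereg}, treating $\mathcal K_{n!/2}$ and $\sigma_{n!/2}$ separately. The only (cosmetic) difference is the final parity check: you factor $p^k-1=(p-1)(1+p+\cdots+p^{k-1})$ to show $(p-1)(n-\sum_k n_k)/4$ is even, while the paper shows $\sum_k n_k\equiv n\pmod 4$ from $n/2\equiv\sum_k n_k/2\pmod 2$; these are equivalent elementary arguments.
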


\begin{proof}
Since $\lambda$ is regular, $n=n_1p+n_2p^2+\cdots+n_sp^s$ with $n_i$ even for
all $1\leq i\leq s$. {By Proposition~\ref{prop:reducbouge}}
$$
\varepsilon({\psi_{\lambda}},f)=\prod_{k=1}^s\varepsilon(\psi_{\underline\lambda,k},f).
$$
Hence, for $f\in\mathcal K_{n!/2}$, one has
$\varepsilon(\psi_{\lambda},f)=1$ by
Proposition~\ref{prop:signeregular}, and
{\begin{align*}
\varepsilon({\psi_{\lambda}},\sigma_{n!/2})&=\prod_{k=1}^s\varepsilon(\psi_{\underline\lambda,k},\sigma_{n!/2})\\
&=\prod_{k=1}^s(-1)^{\frac{(p-1)n_k}{2}}\\
&=(-1)^{\frac{p-1}4\sum_{k=1}^sn_k}.
\end{align*}}
For $1\leq k\leq s$, let $n'_k\in\Z$ such that $n_k=2n'_k$. We have
$$\frac n 2=\sum_{k=1}^sn'_kp^k\equiv\sum_{k=1}^s n'_k \mod 2,$$
because $p$ is odd. Thus $\sum_{k=1}^sn_k\equiv n\mod 4$, and
{$\varepsilon({\psi_{\lambda}},\sigma_{n!/2})=(-1)^{\frac{(p-1)n}4}$}. The result
now follows from Proposition~\ref{prop:bougereg}.
\end{proof}

\begin{lemma}
\label{lemma:singcase}
If $\lambda$ is a singular partition of $n$ with empty {$p$-core},
 and $f\in\mathcal H_{n!/2}$,
then
$$\varepsilon(\chi_{\lambda},f)=\varepsilon({\psi_{\lambda}},f).$$ 
\end{lemma}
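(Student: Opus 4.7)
The strategy parallels the proof of Lemma~\ref{lemma:regcase}, with an iteration running along the $\underline p^*$-branch of the $p$-core tower. I set $\mu_0=\lambda$ and, inductively, $\mu_{k+1}$ to be the entry of $\mathcal Quo_p(\mu_k)$ at position $(p-1)/2$; then each $\mu_k$ is itself singular (with possibly non-empty $p$-core), and $\mathcal Cor_p(\mu_k)=\nu_k:=\lambda_{\underline p^*}^{(k)}$, with $\nu_0=\emptyset$ by hypothesis.

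On the local side, Proposition~\ref{prop:reducbouge} applied to~(\ref{eq:pprimenormalizer}) gives $\varepsilon(\psi_\lambda,f)=\prod_{k\geq 1}\varepsilon(\psi_{\underline\lambda^{(k)},k},f)$, and since each $\underline\lambda^{(k)}$ has only the $\underline p^*$-component nontrivial (equal to $\nu_k$), Proposition~\ref{prop:signesingular} produces the closed form
\[
\varepsilon(\psi_\lambda,f)=\epsilon_f^{\sum_{k\geq 1}kd_k}\prod_{k\geq 1}\varepsilon(\chi_{\nu_k},f),
\]
where $d_k=|\mathfrak D(\nu_k)|$, with the appropriate reading of $\epsilon_f$ for $f\in\mathcal K_{n!/2}$ and for $f=\sigma_{n!/2}$.

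On the global side, I will use that for singular $\mu_k$, the discussion of Section~\ref{subsec:nontriv} (rule (iii) plus Remark~\ref{rk:diagcore}) decomposes $\mathfrak D(\mu_k)$ into two families: the $p$-dilations of $\mathfrak D(\mu_{k+1})$ arising from the unshifted $(p-1)/2$-runner, and the diagonal hooks of the $p$-core $\nu_k$ appearing via the shifts on the remaining runners. This yields the bookkeeping $|\mu_k|=p|\mu_{k+1}|+|\nu_k|$, $d_{\mu_k}=d_{\mu_{k+1}}+d_{\nu_k}$, and $\prod_{h\in\mathfrak D(\mu_k)}h=p^{d_{\mu_{k+1}}}\prod_{h\in\mathfrak D(\mu_{k+1})}h\cdot\prod_{h\in\mathfrak D(\nu_k)}h$. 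Substituting into~(\ref{eq:bougealt}) gives the recursion
\[
\varepsilon(\chi_{\mu_k},f)=\varepsilon(i,f)^{(p-1)|\mu_{k+1}|/2}\,\epsilon_f^{d_{\mu_{k+1}}}\,\varepsilon(\chi_{\mu_{k+1}},f)\,\varepsilon(\chi_{\nu_k},f),
\]
which, telescoped and using $\nu_0=\emptyset$ and $\mu_K=\emptyset$ for $K$ large, collapses to
\[
\varepsilon(\chi_\lambda,f)=\varepsilon(i,f)^{(p-1)/2\cdot\sum_{k\geq 1}|\mu_k|}\,\epsilon_f^{\sum_{k\geq 1}d_{\mu_k}}\,\prod_{k\geq 1}\varepsilon(\chi_{\nu_k},f).
\]

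Matching the two expressions reduces to two identities. First, iterating $d_{\mu_k}=d_{\mu_{k+1}}+d_{\nu_k}$ gives $\sum_{k\geq 1}d_{\mu_k}=\sum_{k\geq 1}kd_k$, so the $\epsilon_f$-exponents coincide. Second, the residual prefactor $\varepsilon(i,f)^{(p-1)/2\sum|\mu_k|}$ must vanish: for $f\in\mathcal K_{n!/2}$ this is immediate since $\varepsilon(i,f)=1$; the delicate point is $f=\sigma_{n!/2}$ with $p\equiv 3\pmod 4$, where the residual is a $\pm 1$ governed by the parity of $\sum|\mu_k|$, and this is the main obstacle of the proof. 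Here I will invoke that every $\nu_j$ is a symmetric $p$-core, hence has only odd-length diagonal hooks, so $|\nu_j|\equiv d_j\pmod 2$; unfolding $|\mu_k|=\sum_{j\geq k}p^{j-k}|\nu_j|$ with $p$ odd yields $\sum_{k\geq 1}|\mu_k|\equiv\sum_{j\geq 1} j|\nu_j|\equiv\sum_{k\geq 1}kd_k\pmod 2$, and this parity identity, combined with the corresponding $\sigma_{n!/2}$-sign coming from Proposition~\ref{prop:signesingular}, cancels the residual factor and completes the match.
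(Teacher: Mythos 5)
Your proposal is correct and follows essentially the same route as the paper: both rest on the fact that for a singular $\lambda$ with empty $p$-core the diagonal hooks are precisely the $p^k$-dilations of the diagonal hooks of the tower entries $\nu_k=\lambda_{\underline p^*}$ at level $k$ (your telescoped identity $\prod_{h\in\mathfrak D(\lambda)}h=p^{\sum_k kd_k}\prod_k\prod_{h\in\mathfrak D(\nu_k)}h$ is exactly the paper's $\mathfrak D_{p^k}(\lambda)=\{p^k h\mid h\in\mathfrak D(\nu_k)\}$), and then combine~(\ref{eq:bougealt}) with Propositions~\ref{prop:signesingular} and~\ref{prop:reducbouge} plus a parity check. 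The differences are only organizational: you obtain the hook decomposition recursively along the $(p-1)/2$-branch and close with the single congruence $\sum_k|\mu_k|\equiv\sum_k kd_k\pmod 2$, while the paper asserts the decomposition directly and checks level-by-level that $n_k(-1)^k-d_k+n_k-d_k+2kd_k\equiv 0\pmod 4$; both hinge on the same fact that $|\nu_k|$ and $d_k$ have the same parity.
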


\begin{proof}
By construction of $\lambda$ from its $p$-core tower and
\S\ref{subsec:nontriv}, 
for all $k\geq 1$, we have
{$${\mathfrak d}_{p^k}(\lambda)=\{p^kh\mid h\in{\mathfrak
d}_{\underline{\lambda},k}\},$$}
where ${\mathfrak d}_{p^k}(\lambda)$ is the set of diagonal {hooklengths} of $\chi_{\lambda}$ {divisible} by $p^k$ {but} not by $p^{k+1}$
and ${\mathfrak d}_{\underline{\lambda},k}$ is the set of diagonal
{hooklengths}
of $\chi_{\underline p^*(k)}$ with $\underline p^*(k)\in I^k$. In the
following, we write  $\chi_k=\chi_{\underline p^*(k)}$. In particular, if
$d_{\lambda}$ and $d_k$ are the number of diagonal hooks of $\lambda$
and the partition with empty {$p$-core tower} except the position $\underline
p^*(k)$ in the level $k$, that is equal to
$\lambda_{\underline p^*(k)}$, then
$d_{\lambda}=\sum_{k=1}^s d_k$.

Let $f\in\mathcal H_{n!/2}$. 
By~(\ref{eq:bougealt}), we obtain
\begin{align*}
\varepsilon(\chi_{\lambda},f)&=
\varepsilon(i,f)^{(n-d_{\lambda})/2}\varepsilon\left(\sqrt{\prod_{h\in
{\mathfrak d}(\lambda)}h},f\right)\\
&=\prod_{k=1}^s\varepsilon(i,f)^{\frac{n_kp^k-d_k}2}\varepsilon
\left(\sqrt{\prod_{h\in
{\mathfrak d}_{p^k}(\lambda)}h},f\right)\\
&=\prod_{k=1}^s\varepsilon(i,f)^{\frac{n_k(-1)^k-d_k}2}\varepsilon
\left(\sqrt{p}^{kd_k}\sqrt{\prod_{h\in
{\mathfrak d}_{\underline\lambda,k}(\lambda)}h},f\right)\\
&=\prod_{k=1}^s\varepsilon(i,f)^{\frac{n_k(-1)^k-d_k+n_k-d_k}{2}}
\varepsilon(\sqrt{p}^{kd_k},f)
\varepsilon(\chi_k,f)\\
&=\prod_{k=1}^s\varepsilon(i,f)^{\frac{n_k(-1)^k-d_k+n_k-d_k+2d_kk}{2}}
\varepsilon(\psi_{\underline\lambda,k},f)\quad\text{by
Prop.~\ref{prop:signesingular}}\\
&=\varepsilon(i,f)^{\frac 1
2\sum_{k=1}^s(n_k(-1)^k-d_k+n_k-d_k+2d_kk)}
\varepsilon({\psi_{\lambda}},f),
\end{align*}
where the last equality comes from 
Proposition~\ref{prop:reducbouge}. However, if $k$ is even, then
$$n_k(-1)^k-d_k+n_k-d_k+2d_kk\equiv 2(n_k-d_k)\equiv 0\mod 4$$ because 
$n_k$ and $d_k$ have the same parity.
If $k$ is odd, then
$$n_k(-1)^k-d_k+n_k-d_k+2d_kk\equiv -n_k+2d_k+n_k-2d_k\equiv 0\mod 4.$$
The result follows.
\end{proof}


\begin{lemma} 
\label{lemma:rs}
Let $\lambda$ be a symmetric partition with empty $p$-core.
Then for $f\in\mathcal H_{n!/2}$
$$\varepsilon(\chi_{\lambda},f)=\varepsilon(\chi_{{\mathfrak
r}(\lambda)},f)\varepsilon(\chi_{{\mathfrak s}(\lambda)},f),$$
where ${\mathfrak
r}(\lambda)$ and ${\mathfrak s}(\lambda)$ are the
regular and the singular parts of $\lambda$ as 
in~(\ref{eq:ctlambdaprimeseconde}).
\end{lemma}

\begin{proof}
By assumption, the $p$-core of $\lambda$ is empty. In particular,
$$\mathcal D(\lambda)=\mathcal D(\mathfrak r(\lambda))\sqcup \mathcal
D(\mathfrak s(\lambda)).$$
Write $d_{\lambda}=|\mathcal D(\lambda)|$, $d_{\mathfrak
r(\lambda)}=|\mathcal D(\mathfrak r(\lambda))|$ and $d_{\mathfrak
s(\lambda)}=|\mathcal D(\mathfrak s(\lambda))|$. We have 
$$d_{\lambda}=d_{\mathfrak r(\lambda)}+d_{\mathfrak s(\lambda)}.$$
Furthermore, we have $|\lambda|=|\mathfrak r(\lambda)|+|\mathfrak
s(\lambda)|$ by construction. Hence, for all $f\in\mathcal H_{n!}$,
Equation~(\ref{eq:bougealt}) gives
\begin{align*}
\varepsilon(\chi_{\lambda},f)&=\varepsilon\left(i^{(|\lambda|-d_{\lambda})/2}\sqrt{\prod_{h\in\mathcal
D(\lambda)}h},f\right)\\
&=\varepsilon\left(i^{(|\mathfrak r(\lambda)|-d_{\mathfrak r(\lambda)})/2}\sqrt{\prod_{h\in\mathcal
D(\mathfrak r(\lambda))}h},f\right)
\varepsilon\left(i^{(|\mathfrak s(\lambda)|-d_{\mathfrak s(\lambda)})/2}\sqrt{\prod_{h\in\mathcal
D(\mathfrak s(\lambda))}h},f\right)\\
&=\varepsilon(\chi_{{\mathfrak
r}(\lambda)},f)\varepsilon(\chi_{{\mathfrak s}(\lambda)},f),
\end{align*}
as required.
\end{proof}

Assume that $\lambda=\lambda^*$. Recall ${\mathcal Q}_p(\lambda)$ is the partition
with the same $p$-quotient as $\lambda$ and with empty $p$-core.
Proposition~\ref{prop:irrcorequotient} and Lemma~\ref{lemma:rs} give
\begin{equation}
\label{eq:main1}
\varepsilon(\chi_{\lambda},f)=\varepsilon(\chi_{{\mathcal Cor}_p(\lambda)},f)
\varepsilon(\chi_{{\mathfrak
r}({\mathcal Q}_p(\lambda))},f)\varepsilon(\chi_{{\mathfrak s}{\mathcal Q}_p(\lambda))},f).
\end{equation}
Now, by Theorem~\ref{thm:irrationaliteNk} and
Proposition~\ref{prop:reducbouge}, we have
\begin{align}
\label{eq:main2}
\varepsilon({\psi_{\lambda}},f)&=\varepsilon(\chi_{{\mathcal Cor}_p(\lambda)},f)\prod_{k=1}^s\varepsilon(\psi_{\underline{{\mathfrak
r}({\mathcal Q}_p(\lambda))},k},f)\prod_{k=1}^s\varepsilon(\psi_{\underline{{\mathfrak
s}({\mathcal Q}_p(\lambda))},k},f)\\
\nonumber&=\varepsilon(\chi_{{\mathcal Cor}_p(\lambda)},f)\varepsilon(\psi_{\mathfrak
r({\mathcal Q}_p(\lambda))},f)\varepsilon(\psi_{\mathfrak
s({\mathcal Q}_p(\lambda))},f).
\end{align}
However, by Lemmas~\ref{lemma:regcase} and~{\ref{lemma:singcase}},
we have $$\varepsilon(\chi_{\mathfrak
r({\mathcal Q}_p(\lambda))},f)=\varepsilon(\psi_{\mathfrak
r({\mathcal
Q}_p(\lambda))},f)\quad\text{and}\quad\varepsilon(\chi_{\mathfrak
s({\mathcal Q}_p(\lambda))},f)=\varepsilon(\psi_{\mathfrak
s({\mathcal Q}_p(\lambda))},f).$$ Finally (\ref{eq:main1}) and (\ref{eq:main2}) give
that
$$\varepsilon(\chi_{\lambda},f)=\varepsilon({\psi_{\lambda}},f).$$
Hence, $\Phi$ is an $\mathcal H_{n!/2}$-equivariant bijection,
as required.

\section{Blockwise Navarro's conjecture for alternating
groups}

For any finite group $G$ and any prime number $p$ dividing $|G|$, recall
that $\Irr(G)$ decomposes into families, the so-called $p$-blocks
of $G$. Write $\operatorname{Bl}(G)$ for the set of $p$-blocks of $G$.
Furthermore, we attach to any $B\in\operatorname{Bl}(G)$ its \emph{$p$-defect
group} $D$. This is 
a $p$-subgroup of $G$ which is well-defined up to conjugation.
Now, by Brauer's {first} main theorem~\cite[(15.45)]{isaacs}, we
can associate to any $p$-block $B$ of $G$ its Brauer correspondent
$B'\in\operatorname{Bl}(\operatorname{N}_G(D))$. Then the \emph{blockwise
Navarro's conjecture} asserts that the 
number of height zero characters in $B$ and $B'$ fixed by
$\sigma\in\mathcal H_{n}$ is the same.

\subsection{Case of $p$ odd}

In order to discuss blockwise Navarro's conjecture for alternating
groups, we will first recall some some facts about the $p$-blocks of 
symmetric and alternating groups.\medskip

It is well-know by the \emph{Nakayama Conjecture} that for any prime
$p$, the $p$-blocks of $\sym_n$ are labeled by the $p$-cores of
partitions of $n$. More {precisely}, two irreducible characters of
$\sym_n$ lie in the same $p$-block if and only if the partitions
labeling them have the same $p$-core; see for
example~\cite[Theorem 11.1]{Olsson}. In the following, such a $p$-core
will be called a \emph{$p$-core of $n$}. Note that there is here an
abuse of {terminology} since a $p$-core of $n$ is not in general a
partition of $n$.
For a $p$-core $\gamma$ of $n$, we denote by $B_{\gamma}$ the
corresponding $p$-block of $\sym_n$, and we define
the $p$-weight of $B_{\gamma}$ by setting
$w=\frac{n-|\gamma|}p$. 

We can describe the height zero characters of
$B_{\gamma}$ in term of the $p$-core tower of partitions labeling
characters of the block as follows. By~\cite[Proposition 11.5]{Olsson},
an irreducible character $\chi_{\lambda}$ lying in the block $B_{\gamma}$
has height zero
if and only if 
$0\leq c_k(\lambda)\leq p-1$ for all $k\geq 1$ with
$c_k(\lambda)=\sum_{\underline j\in I^k}|\lambda_{\underline j}|$, where 
the Notation is as in~(\ref{eq:coretower}). 

Furthermore, without loss of
generality, we can assume by~\cite[Proposition 11.3]{Olsson} that
any Sylow $p$-subgroup $D_{\gamma}$ of $\sym_{pw}\subseteq\sym_n$ is a
defect group of $B_{\gamma}$. Let
$pw=w_1p+w_2p^2+\cdots$ denote the $p$-adic expansion of $pw$. Then
by~\cite[page 159]{Fong}, we have
$$\operatorname{N}_{\sym_n}(D_{\gamma})/D_{\gamma}'\simeq\sym_{|\gamma|}\times\prod_{k\geq
1}Y^k\wr\sym_{w_k}.$$
Moreover, by~\cite[page 158 and 159]{Fong}, 
the set $\Irr_0(B'_{\gamma})$ of height zero characters of
the Brauer correspondent
$B'_{\gamma}\in\operatorname{N}_{\sym_n}(D_{\gamma})$ of $B_{\gamma}$ 
is
$$\Irr_0(B'_{\gamma})=\left\{\chi_{\gamma}\otimes\prod_{k\geq
1}\psi_{\underline\lambda,k}\;\big| \;
\chi_{\gamma}\in\Irr(\sym_{|\gamma|});\,
\psi_{\underline\lambda,k}\in \Irr(Y^k\wr\sym_{w_k})\right\}.$$

From now on, assume $p$ is odd.
Note that $B_{\gamma^*}=\{\chi_{\lambda^*}\in\Irr(\sym_n)\mid
{\mathcal Cor}_{p}(\lambda)=\gamma\}=B_{\gamma}^*$. In particular, if
$\gamma\neq\gamma^*$, then $B_{\gamma}\cap B_{\gamma^*}=\emptyset$ and
$B_{\gamma}$ contains no self-conjugate character. {Then}
\cite[(9.2)]{Navarro} implies that the two
$p$-blocks $B_{\gamma}$ and $B_{\gamma^*}$ cover a unique $p$-block
$b_{\gamma}$ of $\Alt_n$ (Note that $b_{\gamma}=b_{\gamma^*}$).
Furthermore, if $\gamma=\gamma^*$ and $B_{\gamma}$ {has non-zero} defect,
then there is {an} irreducible character 
$\chi_{\lambda}\in B_{\gamma}$ with $\lambda\neq\lambda^*$ and 
\cite[(9.2)]{Navarro} implies that $B_{\gamma}$ again covers a unique
$p$-block $b_{\gamma}$ of $\Alt_n$. 
Finally, for $n\geq 3$, 
if $B_{\gamma}$ has defect zero and $\gamma=\gamma^*$, then
$\{\chi_{\gamma}^+\}$ and $\{\chi_{\gamma}^-\}$ are two $p$-blocks of
$\Alt_n$ of defect zero. {These two blocks are equal to their Brauer
correspondent, and the blockwise Navarro's conjecture is then trivial in
this case}. 

We remark that $D_{\gamma}$ is a defect group of $B_{\gamma}$ since $p$
is odd, and
$\operatorname{N}_{\Alt_n}(D_{\gamma})=\operatorname{N}_{\sym_n}(D_{\gamma})^+$.
Assume that $B_{\gamma}$ has a non-zero defect. Then $B'_{\gamma}$ covers
a unique $p$-block of $\operatorname{N}_{\sym_n}(D_{\gamma})^+$. 
Indeed, if $\gamma\neq\gamma^*$
then the restrictions to $\operatorname{N}_{\sym_n}(D_{\gamma})^+$ of
the characters of $B'_{\gamma}$ form a $p$-block
$b'_{\gamma}(=b'_{\gamma^*})$ of
$\operatorname{N}_{\sym_n}(D_{\gamma})^+$ covered by $B'_{\gamma}$ and
$B'_{\gamma^*}$ by~\cite[(9.2)]{Navarro}, and if $\gamma=\gamma^*$, then $B'_{\gamma}$ has a
self-conjugate character (since the block has a non-zero defect) and
$B'_{\gamma}$ covers a unique $p$-block $b'_{\gamma}$ of
$\operatorname{N}_{\sym_n}(D_{\gamma})^+$ by~\cite[(9.2)]{Navarro}.   
Furthermore, by unicity of the covered block, 
$b_{\gamma}'$ 
is the Brauer correspondent of 
$b_{\gamma}$ by~\cite[(9.28)]{Navarro}.
Therefore, the height zero characters of this block are identified (by
lifting) with the set of irreductible characters of
$$\operatorname{N}_{\Alt_n}(D_{\gamma})/D_{\gamma}\simeq(\sym_{\gamma}\times\prod_{k\geq
1}Y^k\wr\sym_{w_k})^+.$$ 
Let $\lambda$ be a partition of $n$ with $p$-core $\gamma$ and with
height zero. Write
$\mathcal{CT}(\lambda)$ for the $p$-core tower of $\lambda$ with the
Notation as in~(\ref{eq:coretower}). In particular,
$\lambda_{\emptyset}=\gamma$.
Write $$\psi_{\lambda}=\chi_{\gamma}\otimes\prod_{k\geq
1}\psi_{\underline\lambda,k}$$ for the irreducible character of
$B_{\gamma}'$ labeled by $\lambda$ (which is well-defined since
$\chi_{\lambda}\in B_{\gamma}$ is of height zero).
Then $\psi_{\lambda}$ splits into one or two constituents of
$b'_{\gamma}$ whenever $\lambda\neq \lambda^*$ or $\lambda=\lambda^*$.
We again write $\psi_{\lambda}$ for the irreducible restriction in the
first case, and we write $\psi_{\lambda}^{\pm}$ for the two irreducible
constituents otherwise. 

\begin{theorem}Let $p$ be an odd prime. Let $\gamma$ be a $p$-core of $n$. 
We assume $w>0$. For a partition $\lambda$ of $n$ with $p$-core
$\gamma$, define
$\Phi:\Irr_0(b_{\gamma})\rightarrow \Irr_0(b_{\gamma}')$  by setting
$$\Phi(\chi_{\lambda})=\psi_{\lambda}\quad\text{if
}\lambda\neq\lambda^{*} \quad \text{and}
\quad \Phi(\chi_{\lambda}^{\pm})=\psi_{\lambda}^{\pm}\quad\text{if
}\lambda = \lambda^*.$$
Then $\Phi$ is a $\mathcal H_{n!/2}$-equivariant bijection. In
particular, blockwise Navarro's conjecture holds for the $p$-blocks of
alternating groups.
\end{theorem}

\begin{proof}
First, we remark that the map is well-defined.
We only have to consider the case of an irreducible character
$\chi_{\lambda}^{\pm}\in b_{\gamma}$ for $\lambda=\lambda^*$. 
In particular, $\mathcal Cor_p(\lambda)=\gamma$, and by
Equation~(\ref{eq:main1}) we have for any $f\in\mathcal H_{n!/2}$
$$\varepsilon(\chi_{\lambda},f)=\varepsilon(\chi_{\gamma},f)
\varepsilon(\chi_{\mathfrak
r(\mathcal Q_p(\lambda)),f})
\varepsilon(\chi_{\mathfrak
s(\mathcal Q_p(\lambda)),f}).$$
Now, applying the resuts of Section~\ref{sec:local} to $pw$ with the group
$(\prod_{k\geq
1}Y^k\wr\sym_{w_k})^+$, Proposition~\ref{prop:reducbouge} and
Theorem~\ref{thm:irrationaliteNk} give
$$\varepsilon(\prod_{k\geq 1}\psi_{\lambda,k},f)=\varepsilon(\psi_{\mathfrak
r(\mathcal Q_p(\lambda))},f)\varepsilon(\psi_{\mathfrak
r(\mathcal Q_p(\lambda))},f).$$
Again using Proposition~\ref{prop:reducbouge}, we obtain
$$\varepsilon(\psi_{\lambda},f)=\varepsilon(\chi_{\gamma},f)
\varepsilon(\psi_{\mathfrak
r(\mathcal Q_p(\lambda))},f)\varepsilon(\psi_{\mathfrak
r(\mathcal Q_p(\lambda))},f),$$
and we conclude by Lemmas~\ref{lemma:regcase} and~\ref{lemma:singcase}.
\end{proof}

\subsection{Case of $p=2$}

First, we will prove that an analogue of Theorem~\ref{prop:legendre} holds for
$p=2$.

\begin{theorem}
\label{prop:legendre2}
Assume $p=2$. We have
$$
\left(\frac{2}d\right)=
\left(\frac{2}q\right)
\left(\frac{2}c\right),$$
where $c$, $d$ and $q$ are as in Theorem~\ref{prop:legendre}.
\end{theorem}

\begin{proof}
Let $\gamma\in\{0,1\}$ be such that $\delta_{\gamma}\geq 0$.
We write $\mathcal X'_{\gamma}$, $\mathcal Y'_{\gamma}$, $\mathcal
X_{\gamma}$, $\mathcal Y_{\gamma}$ and $\Delta_{\gamma}$ for the sets
labeling ${\mathfrak D}(\lambda)$, ${\mathfrak D}({\mathcal
Quo}_p(\lambda))$, and ${\mathfrak D}({\mathcal Cor}_{p}(\lambda))$,
respectively. If $\delta_{\gamma}=0$, then the statement is
trivial. Assume $\delta_{\gamma}>0$. 
We also consider the set $\mathcal
B_{\gamma}$ as in Remark~\ref{rk:diagcore}.
Let $x\in \mathcal X_{\gamma}$ and $\varepsilon\in\{1,3\}$ be such that
$d_x=4x+\varepsilon$. Hence, $d_{x^*}=4\phi(x^*)+\varepsilon'$, where
$\varepsilon'=4-\varepsilon$. Furthermore, with the notation 
of~(\ref{eq:imx0}),
$$c(d_x)=4(x+\delta_{\gamma})+\varepsilon\quad\text{and}\quad
c(d_{x^*})=4(\phi(x^*)-\delta_{\gamma})+\varepsilon',$$
where $c(d_{x^*})$ ``exists'' if and only if $\phi(x^*)\geq \delta_{\gamma}$. 
Assume $\phi(x^*)\geq \delta_{\gamma}$. Then
$$c(d_x)c(d_{x^*})=d_xd_{x^*}+4\delta_{\gamma}(d_{x^*}-d_x)-16d^2\equiv
d_xd_{x^*}+4\delta_{\gamma}(d_{x^*}-d_x)\mod 16.$$
Since $d_{x^*}-d_x$ is even, we obtain
$(c(d_x)c(d_{x^*})^2\equiv (d_xd_{x^*})^2\mod 16$. Hence,
$(c(d_x)c(d_{x^*})^2-1)/8-((d_x d_{x^*})^2-1)/8$ is even, whence

\begin{equation}
\label{eq:fondamental2}
\left(\frac{2}{c(d_x)c(d_{x^*})}\right)=
(-1)^{\frac{(c(d_x)c(d_{x^*}))^2-1}{8}}=
(-1)^{\frac{(d_x
d_{x^*})^2-1}8}=\left(\frac{2}{d_xd_{x^*}}\right).
\end{equation}

Assume now that $0\leq \phi(x^*)\leq \delta_{\gamma}-1$. In particular,
$x^*\in\mathcal B_{\gamma}$, and 
$$c_{x^*}=4(\delta_{\gamma}-1-\phi(x^*))+\varepsilon=4\delta_{\gamma}-4-d_{x^*}+
\underbrace{\varepsilon
+\varepsilon'}_{=4}=4d-d_{x^*},$$
and we again have $(c(d_x)c_{x^*})^2\equiv
(d_xd_{x^*})^2\mod 16$. Hence,
\begin{equation}
\label{eq:fondamental3}
\left(\frac{2}{c(d_x)c_{x^*}}\right)=\left(\frac{2}{d_xd_{x^*}}\right).
\end{equation}
Now, using 
Equations~(\ref{eq:fondamental2}) and~(\ref{eq:fondamental3}),
like in the proof of Theorem~\ref{prop:legendre},
we obtain
$$
\left(\frac{2}d\right)=\left(\frac{2}q\right)\left(\frac{2}c\right)
\prod_{x\in\mathcal B_{\gamma}}
\left(\frac{2}{
d_xd_{x^*}}\right)
\left(\frac{2}{
c(d_{x^*})c_x}\right)=
\left(\frac{2}q\right)\left(\frac{2}c\right),
$$
\end{proof}

\begin{theorem}
\label{rk:pegual2}
The blockwise Navarro's conjecture holds for alternating groups
at $p=2$.
\end{theorem}

\begin{proof}
Let $b_{\gamma}$ be a $2$-block of $\Alt_n$ covered by a $2$-block
$B_{\gamma}$ of $\sym_n$ labeled by the $2$-core $\gamma$. Write
$r=|\gamma|$ and $w$ for the $2$-weight of $B_{\gamma}$. As above, we
denote by $\chi_{\lambda}$ the irreductible character of $\sym_n$ labeled
by $\lambda$. We also denote the irreducible characters of $\Alt_n$ by
$\vartheta_{\lambda}^+$ for $\lambda\neq\lambda^*$ and
$\vartheta_{\lambda}^{\pm}$ for $\lambda=\lambda^*$.  
We only have to consider the case that $\gamma$ is self-conjugate and
$w>0$.
Write $\mathcal P_{\gamma}$ for the set of partitions $\mu$ of $2w$ such that
$\chi_{\mu}$ has height zero or $\chi_{\mu}$ is of height $1$ and $\mu$ is
self-conjugate.
By~\cite[Proposition 12.5]{Olsson}, we have
$$\Irr_0(b_{\gamma})=\{\vartheta_{\lambda}^{\pm}
\mid {\mathcal Cor}_2(\lambda)=\gamma,\
{\mathcal Q}_2(\lambda)\in \mathcal P_{\gamma}\}.$$
By~\cite[(12.2)]{Olsson}, $B_{\gamma}$ covers only the block $b_{\gamma}$.
Hence, $b_{\gamma}$ is $\sym_n$-invariant, and by~\cite[Theorem
9.17]{Navarro} the defect group of $b_{\gamma}$ is $D=\Alt_n\cap \widetilde{D}$,
where $\widetilde{D}$ is the defect group of $B_{\gamma}$. Since
$\widetilde{D}$ is isomorphic to the Sylow $2$-subgroup of a $\sym_{2w}$,
it follows that $D$ is isomorphic to the Sylow $2$-subgroup of $\Alt_{2w}$
and
$$\operatorname{N}_{\Alt_n}(D)\simeq (\Sym_r\times
\operatorname{N}_{\Sym_{2w}}(D))^+.$$
We remark that
$(\operatorname{N}_{\Sym_{2w}}(D))^+=\operatorname{N}_{\Alt_{2w}}(D)$.
By~\cite[Theorem 5.6]{MichlerOlsson} applied to the principal $2$-block of
$\Alt_{2w}$, the number of $2'$-characters of the principal blocks of  
$\Alt_{2w}$ and of $\operatorname{N}_{\Alt_{2w}}(D)$ is the same. 
By~\cite[Proposition 12.5]{Olsson} $\mathcal P_{\gamma}$ labels the set of
$2'$-characters of the principal block of $\Alt_{2w}$. 
We choose a bijection $\theta$ between these two sets, and for
$\mu\in\mathcal P_{\gamma}$, we set
$$\psi_{\mu}^{\pm}=\theta(\vartheta_{\mu}^{\pm}).$$
Now, the second author proved in~\cite{RishiNathAlt} that if $w>3$, then
for all $\mu\in\mathcal P_{\gamma}$, $\vartheta^{\pm}_{\mu}$ and
$\theta(\vartheta^{\pm}_{\mu})$ are fixed by all $f\in\mathcal H_{n!/2}$.
Furthermore, for $w=1$ and $w=2$, the normalizer of the Sylow $2$-subgroup
of $\Alt_{2w}$ is $\Alt_{2w}$ itself. We can take $\theta$ to be the
identity, that is automatically 
$\mathcal H_{n!/2}$-equivariant.

Write $b'_{\gamma}$ for the Brauer correspondent of $b_{\gamma}$ 
 and
$B'_{\gamma}$ for the unique block of $\Sym_r\times
\operatorname{N}_{\Sym_{2w}}(D)$ that covers $b'_{\gamma}$ by
\cite[Corollary 9.6]{Navarro}.
%
By Clifford Theory, for any $\mu\in\mathcal P_{\gamma}$, there is
$\widetilde{\psi}_{\mu}\in\Irr(\operatorname{N}_{\Sym_{2w}}(D))$ such that
$\psi_{\mu}^{\pm}$ appears in its restriction to
$(\operatorname{N}_{\Sym_{2w}}(D))^+$ with multiplicity one. Hence, for
any $\mu\in\mathcal P_{\gamma}$, we have $(\chi_{\gamma}\otimes
\widetilde{\psi}_{\mu})^{\pm}\in \Irr_0(b_{\gamma}')$. On the other hand, 
by cardinality~\cite[Theorem 5.6]{MichlerOlsson}, we deduce that
$$\Irr_0(b'_{\gamma})=\{(\chi_{\gamma}\otimes
\widetilde{\psi}_{\mu})^{\pm}\mid \mu\in\mathcal P_{\gamma}\}.$$
Now, we define
$$\Phi:\Irr_0(b_{\gamma})\rightarrow\Irr_0(b'_{\gamma}),\quad 
\vartheta_{\lambda}^{\pm}\mapsto(\chi_{\gamma}\otimes \psi_{{\mathcal
Q}_2(\lambda)})^{\pm}.$$
We remark that $\Phi$ is a bijection by construction.
If $\lambda\neq\lambda^*$, then $\vartheta_{\lambda}^+$ and
$\Phi(\vartheta_{\lambda})^+$ are fixed by all $f\in\mathcal H_{n!/2}$.
Assume that $\lambda=\lambda^*$. Write $d_{\lambda}$, $d_{{\mathcal
Q}_2(\lambda)}$ and $d_{\gamma}$ for the
product of diagonal hooks of ${\lambda}$, $\mathcal Q_2(\lambda)$ and
$\gamma$. Then
 by~\cite[Theorem 2.2]{RishiNathAlt} and Theorem~\ref{prop:legendre2}, we obtain
for any $\lambda$ labeling a character of $b_{\gamma}$
$$
\varepsilon(\chi_{{\lambda}},f)=\left(\frac{2}{d_{{\lambda}}}\right)
=\left(\frac{2}{d_{\gamma}}\right)\left(\frac{2}{d_{\mathcal Q_2(\lambda)}}
\right)
=\varepsilon(\chi_{\gamma},f)\varepsilon(\chi_{\mathcal
Q_2(\lambda)},f).
$$
On the other hand,  by Proposition~\ref{prop:reducbouge}, for any $f\in\mathcal
H_{n!/2}$
$$\varepsilon(\chi_{\gamma}\otimes\widetilde{\psi}_{\mathcal Q_2(\lambda)},f)
=\varepsilon(\chi_{\gamma},f)\varepsilon(\widetilde{\psi}_{\mathcal
Q_2(\lambda)},f).$$
However, $\varepsilon(\widetilde{\psi}_{\mathcal Q_2(\lambda)},f)=
\varepsilon(\chi_{\mathcal Q_2(\lambda)},f)$ because $\theta$ is $\mathcal
H_{n!/2}$-equivariant. 
Thus,
$$\varepsilon(\chi_{{\lambda}},f)=
\varepsilon(\chi_{\gamma}\otimes\widetilde{\psi}_{\mathcal
Q_2(\lambda)},f),
$$
as required.

\end{proof}

\bigskip

{\bf Acknowledgements.} The first author acknowledges the support of
the ANR grant GeRepMod ANR-16-CE40-0010-01. The second author
acknowledges the support of PSC-CUNY TRADA-47-785. 
The authors would like to thank the Banff International Research Station
(BIRS) where conversations on this project began during the 2014
Global/Local Conjectures in Representation Theory of Finite Groups
workshop. Part of this work was done at the Centre Interfacultaire
Bernoulli (CIB) in the Ecole Polytechnique Federale de Lausanne
(Switzerland), during the 2016 Semester Local Representation Theory and
Simple Groups. The authors thank the CIB for the financial and logistical
support. The authors would also like to thank the IMJ-PRG at the
University of Paris Diderot and the Department of Mathematics at the
Graduate Center, City University of New York for the financial and
logistical support which allowed the completion of this project. The
authors sincerely thank Gunter Malle for his precise reading of the paper
and his helpful comments. Finally, the authors wish to thank the referee
for their careful reading of the manuscript and for their suggestions that
improves the reading of the paper. 

\bibliographystyle{abbrv}
\bibliography{references}

\end{document}